\newtheorem{theorem}{Theorem}[section]
\newtheorem{proposition}[theorem]{Proposition}
\newtheorem{corollary}[theorem]{Corollary}
\newtheorem{lemma}[theorem]{Lemma}
\theoremstyle{definition}
\newtheorem{definition}[theorem]{Definition}
\newtheorem{remark}[theorem]{Remark}
\numberwithin{equation}{section}
\def\O{\mathcal O}
\def\Q{\mathcal Q}
\def\P{\mathcal P}
\def\g{\mathfrak g}
\def\h{\mathfrak h}
\def\k{\mathbf k}
\def\Z{\mathbb Z}
\def\A{\mathbb A}
\def\Hom{{\rm Hom}}
\def\wt{{\rm wt}}
\def\Supp{{\rm Supp}}
\def\Ire{I^{\rm re}}
\def\Iim{I^{\rm im}}
\def\Mod{{\rm Mod}}
\def\Proj{{\rm Proj}}
\def\Rep{{\rm Rep}}
\newcommand{\nc}{\newcommand}
\nc{\re}{\mathrm{re}} \nc{\im}{\mathrm{im}}
\nc{\be}{\begin{enumerate}} \nc{\ee}{\end{enumerate}}
\nc{\bnum}{\be[{\rm(i)}]} \nc{\bna}{\be[{\rm(a)}]}
\nc{\eq}{\begin{eqnarray}} \nc{\eneq}{\end{eqnarray}}
\nc{\eqn}{\begin{eqnarray*}} \nc{\eneqn}{\end{eqnarray*}}
\nc{\Oint}{\mathcal{O}_{\mathrm{int}}} \nc{\noi}{\noindent}
\nc{\hs}{\hspace*} \nc{\ba}{\begin{array}} \nc{\ea}{\end{array}}
\nc{\seteq}{\mathbin{:=}} \nc{\set}[2]{\left\{#1\mid #2\right\}}
\nc{\la}{\langle} \nc{\ra}{\rangle} \nc{\E}{\mathcal{E}}
\nc{\F}{\mathcal{F}}
\nc{\bl}{\bigl} \nc{\br}{\bigr}
\nc{\cl}{\colon}
\begin{document}

\title[Categorification of $V(\Lambda)$ over generalized Kac-Moody algebras]
{Categorification of highest weight modules over quantum generalized
Kac-Moody algebras}
\author[Seok-Jin Kang]{Seok-Jin Kang$^{1}$}
\thanks{$^1$ This work was supported by KRF Grant \# 2007-341-C00001 and NRF Grant \# 2010-0010753.}
\address{Department of Mathematical Sciences and Research Institute of Mathematics,
Seoul National University, 599 Gwanak-ro, Gwanak-gu, Seoul 151-747,
Korea} \email{sjkang@snu.ac.kr}

\author[Masaki Kashiwara]{Masaki Kashiwara$^{2}$}
\thanks{$^2$ This work was supported by Grant-in-Aid for
Scientific Research (B) 22340005, Japan Society for the Promotion of
Science.}
\address{Research Institute for Mathematical Sciences, Kyoto University, Kyoto 606-8502,
Japan, and Department of Mathematical Sciences, Seoul National
University, 599 Gwanak-ro, Gwanak-gu, Seoul 151-747, Korea}
\email{masaki@kurims.kyoto-u.ac.jp}

\author[Se-jin Oh]{Se-jin Oh$^{3}$}
\thanks{$^{3}$ This work was supported by BK21 Mathematical Sciences Division and NRF Grant \# 2010-0019516.}
\address{Department of Mathematical Sciences, Seoul National University,
599 Gwanak-ro, Gwanak-gu, Seoul 151-747, Korea}
\email{sj092@snu.ac.kr}

\date{\today}
\subjclass[2000]{05E10, 16G99, 81R10} \keywords{categorification,
Khovanov-Lauda-Rouquier algebras, cyclotomic quotient, quantum generalized
Kac-Moody algebras}

\begin{abstract}
Let $U_q(\g)$ be a quantum generalized Kac-Moody algebra and let
$V(\Lambda)$ be the integrable highest weight $U_q(\g)$-module with
highest weight $\Lambda$. We prove that the cyclotomic
Khovanov-Lauda-Rouquier algebra $R^\Lambda$ provides a
categorification of $V(\Lambda)$.
\end{abstract}

\maketitle

\vskip 2em

\section*{introduction}
The {\it Khovanov-Lauda-Rouquier algebras}, which were introduced
independently by Khovanov-Lauda and Rouquier, have emerged as a
categorification scheme for quantum groups and their highest weight
modules \cite{KL09, KL11, R08}. That is, if $U_q(\g)$ is the quantum
group associated with a symmetrizable Kac-Moody algebra and $R$ is
the corresponding Khovanov-Lauda-Rouquier algebra, then it was shown
in \cite{KL09, KL11, R08} that there exists an $\A$-algebra
isomorphism
$$U_{\A}^{-}(\g) \simeq [\Proj(R)] =\bigoplus_{\alpha \in \mathtt{Q}^+}
[\Proj(R(\alpha))],$$ where $\A=\Z[q,q^{-1}]$, $U_{\A}^{-}(\g)$ is
the integral form of $U_q^{-}(\g)$, and $[\Proj(R)]$ denotes the
Grothendieck group of the category $\Proj(R)$ of finitely generated
graded projective $R$-modules. Moreover, in
\cite{KL09}, Khovanov and Lauda defined a quotient $R^{\Lambda}$ of
$R$, called the {\em cyclotomic Khovanov-Lauda-Rouquier algebra of
weight $\Lambda$}, and conjectured that there exists a
$U_{\A}(\g)$-module isomorphism
$$V_{\A}(\Lambda) \simeq [\Proj(R^{\Lambda})] =\bigoplus_{\alpha \in \mathtt{Q}^+}
[\Proj(R^{\Lambda}(\alpha))],$$ where $V_{\A}(\Lambda)$ is the
integral form of an irreducible highest weight module $V(\Lambda)$.
It is called the {\it cyclotomic categorification conjecture}.

Brundan and Stroppel (\cite{BS08}) proved a special case of this
conjecture in type $A_n$  and, Brundan and Kleshchev (\cite{BK09})
proved it for type $A_{\infty}$ and $A^{(1)}_n$. In
\cite{LV09}, the crystal version of the conjecture was proved. That
is, Lauda and Vazirani defined the crystal structure on the set of
isomorphism classes of simple objects of the categories $\Rep(R)$
and $\Rep(R^{\Lambda})$ of finite-dimensional $R$-modules and
$R^{\Lambda}$-modules, and showed that they are isomorphic to
$B(\infty)$ and $B(\Lambda)$, respectively. 
Recently, Kang and Kashiwara (\cite{KK11}) proved the
cyclotomic categorification conjecture for {\em all}
symmetrizable Kac-Moody algebras. In \cite{Web10}, Webster gave
a categorification of tensor products of integrable highest weight
modules over quantum groups. 

In \cite{KOP11}, Kang, Oh and Park introduced a family of {\it
Khovanov-Lauda-Rouquier algebras $R$ associated with
Borcherds-Cartan data} and showed that they provide a
categorification of quantum generalized Kac-Moody algebras.
Moreover, for each dominant integral weight $\Lambda$, they defined
the {\it cyclotomic Khovanov-Lauda-Rouquier algebra}
$$R^{\Lambda} =\bigoplus_{\alpha \in \mathtt{Q}^{+}} R^{\Lambda}(\alpha),$$
where $R^{\Lambda}(\alpha) = R(\alpha) \big/ I^{\Lambda}(\alpha)$
and $I^{\Lambda}(\alpha)$ is a two-sided ideal depending on $\Lambda$. 
They proved that the categories of finite-dimensional $R$-modules
and $R^{\Lambda}$-modules have crystal structures that are
isomorphic to $B(\infty)$ and $B(\Lambda)$, respectively.

In this paper,  we prove that
Khovanov-Lauda's cyclotomic categorification conjecture holds for
all generalized Kac-Moody algebras. The main result of this paper
can be summarized as follows. For each $i \in I$, we define two
functors
\begin{align*}
& \E^{\Lambda}_i \cl \Mod(R^{\Lambda}(\beta+\alpha_i)) \longrightarrow \Mod(R^{\Lambda}(\beta)), \\
& \F^{\Lambda}_i \cl \Mod(R^{\Lambda}(\beta)) \longrightarrow \Mod(R^{\Lambda}(\beta+\alpha_i)).
\end{align*}
by
\begin{align*}
& \E^{\Lambda}_i(N) = e(\beta,i)N = e(\beta,i)R^{\Lambda}(\beta+\alpha_i)
                         \otimes_{R^{\Lambda}(\beta+\alpha_i)} N, \\
& \F^{\Lambda}_i(M) = q_i^{1-\langle h_i,\Lambda-\beta\rangle} R^{\Lambda}(\beta+\alpha_i)e(\beta,i)
 \otimes_{R^{\Lambda}(\beta)} M,
\end{align*}
where $q^k$ ($k \in \Z$) denotes the degree shift functor, $M \in
\Mod(R^{\Lambda}(\beta))$ and $N \in
\Mod(R^{\Lambda}(\beta+\alpha_i))$. Then we show that 
the functors $\E^{\Lambda}_i$ and $\F^{\Lambda}_i$ are
well-defined exact functors on $\Proj (R^{\Lambda})$ 
(Theorem~\ref{Thm: Exact}) and they
satisfy the commutation relations (Theorem \ref{Thm: Main})
as operators on $[\Proj(R^{\Lambda})]$
$$[\E^{\Lambda}_i, \F^{\Lambda}_j] = \delta_{i,j}\dfrac{K_i-K^{-1}_i}{q_i-q^{-1}_i}, \quad
\text{ where } \quad K_i|_{[\Proj(R^{\Lambda}(\beta))]} \seteq
q_i^{\langle h_i,\Lambda-\beta \rangle}.$$ Therefore, we obtain a
categorification of the irreducible highest weight module
$V(\Lambda)$ (Theorem \ref{Thm: Categoeification}):
$$[\Proj(R^{\Lambda})] \simeq V_{\A}(\Lambda) \quad \text{ and } \quad
 [\Rep(R^{\Lambda})] \simeq V_{\A}(\Lambda)^{\vee}, $$
where $V_{\A}(\Lambda)^{\vee}$ is the dual of $V_{\A}(\Lambda)$ with respect to a non-degenerate symmetric
 bilinear form on $V(\Lambda)$.

We follow the outline given in \cite{KK11}. The main difference
and difficulty in this paper lie in that we need to deal with a
family of polynomials $\mathcal{P}_{i}$ of degree
$1-\frac{a_{ii}}{2}$ $(i\in I)$ given in \eqref{Def: P_i}, which
makes many of calculations more complicated. Accordingly, the
statements in some lemmas and the one in Theorem \ref{Thm: A nu}
have been modified. The geometric meaning of the polynomials
$\mathcal{P}_{i}$ was recently clarified when the Borcherds-Cartan
datum is symmetric \cite{KKP12}. 

In \cite{KL10}, Khovanov-Lauda gave a precise
description of the relations among the 2-morphisms for 
categorifications of integrable representations of Kac-Moody
algebras, and proved it in the ${\mathfrak{sl}}_{n}$ case.
These relations are proved by Cautis-Lauda \cite{CL12}
for symmetrizable Kac-Moody algebras  under certain conditions. 
It would be an
interesting problem to adapt their relations to the generalized
Kac-Moody algebra case.

This paper is organized as follows. Section \ref{Sec: QGKM and Oint}
contains a brief review of quantum generalized Kac-Moody algebras
and their integrable modules. In Section \ref{Sec: KLR corr QGKM},
we recall the definition of $R$ and its basic properties given in
\cite{KOP11}. In Section \ref{Sec: Fuctors on Mod R}, we define the
functors $E_i$, $F_i$ and $\overline{F}_i$ on $\Mod(R)$ and derive
the relations among them in terms of exact sequences (Theorem
\ref{Thm: Comm E_i F_j}, Theorem \ref{Thm: Comm E_i bar F_j }). In
Section \ref{Sec: cyclo quotient}, we show that the structure of
$R^{\Lambda}$ is compatible with the integrability conditions and
the functors $\mathcal{E}^{\Lambda}_i$ and $\mathcal{F}^{\Lambda}_i$
are well-defined exact functors on $\Proj(R^{\Lambda})$ and
$\Rep(R^{\Lambda})$. In Section \ref{Sec: Categorification}, by
proving the commutation relations among $\mathcal{E}^{\Lambda}_i$
and $\mathcal{F}^{\Lambda}_i$, we conclude that the cyclotomic
Khovanov-Lauda- Rouquier algebra $R^{\Lambda}$ provides a
categorification of the irreducible highest weight module
$V(\Lambda)$ over a quantum generalized Kac-Moody algebra $U_q(\g)$.

\vskip 2em

\section{Quantum generalized Kac-Moody algebras and integrable modules} \label{Sec: QGKM and Oint}

Let $I$ be an index set. A square matrix $\mathtt{A}=(a_{ij})_{i,j \in I}$
with $a_{ij} \in \Z$ is called a {\it Borcherds-Cartan matrix}  if it satisfies
$$
\text{(i) $a_{ii}=2$ or $a_{ii} \in 2\Z_{\le 0}$, \quad (ii) $a_{ij} \le 0$ for $i \neq j$, \quad
 (iii) $a_{ij} =0 $ if and only if $a_{ji}=0$.}$$
An element $i$ of $I$ 
is said to be {\it real} if $a_{ii}=2$ and {\it imaginary}, otherwise. 
We denote by
$\Ire$ the set of all real indices and $\Iim$ the set of all imaginary indices. In this paper, we assume that
$\mathtt{A}$ is {\it symmetrizable};  i.e., there is a diagonal matrix $\mathtt{D}={\rm diag}(d_i \in \Z_{>0} \ |
 \ i \in I)$ such that $\mathtt{DA}$ is symmetric.

A \emph{Borcherds-Cartan datum} $(\mathtt{A},\mathtt{P},\Pi,\Pi^{\vee})$ consists of
\begin{enumerate}
\item[(1)] a Borcherds-Cartan matrix $\mathtt{A}$,
\item[(2)] a free abelian group $\mathtt{P}$, the \emph{weight lattice},
\item[(3)] $\Pi= \{ \alpha_i \in \mathtt{P} \mid \ i \in I \}$, the set of \emph{simple roots},
\item[(4)] $\Pi^{\vee}= \{ h_i \ | \ i \in I  \} \subset \mathtt{P}^{\vee}\seteq\Hom(\mathtt{P},\Z)$, the set of
\emph{simple coroots},
\end{enumerate}
satisfying the following properties:
\begin{enumerate}
\item[(a)] $\langle h_i,\alpha_j \rangle = a_{ij}$ for all $i,j \in I$,
\item[(b)] $\Pi$ is linearly independent,
\item[(c)] for any $i \in I$, there exists $\Lambda_i \in \mathtt{P}$ such that
           $\langle h_j ,\Lambda_i \rangle =\delta_{ij}$ for all $j \in I$.
\end{enumerate}

Let $\h= \mathbb{Q} \otimes_{\Z}\mathtt{P}^{\vee}$. Since $\mathtt{A}$ is symmetrizable,
there is a symmetric bilinear form $( \ | \ )$ on $\h^*$ satisfying
$$(\alpha_i | \alpha_j)= d_i a_{ij} \quad \text{ and } \quad  (\alpha_i | \lambda) = d_i \langle h_i, \lambda
\rangle \quad \text{ for all } i,j \in I, \ \lambda \in \h^*.$$

We denote by $\mathtt{P}^{+}\seteq \{ \lambda \in \mathtt{P} \ | \ \lambda(h_i) \in \Z_{\ge
0}, i \in I \}$ the set of \emph{dominant integral weights}. The
free abelian group $\mathtt{Q}= \oplus_{i \in I} \Z \alpha_i$ is called the
\emph{root lattice}. Set $\mathtt{Q}^+= \sum_{i \in I} \Z_{\ge 0}
\alpha_i$. For $\alpha = \sum k_i \alpha_i \in \mathtt{Q}^+$ and $i \in I$, we define
$$ \Supp(\alpha)= \{ i \in I \ | \ k_i \neq 0 \},
\quad \Supp_i(\alpha) = k_i,
\quad |\alpha|= \sum_{i\in I} k_i.$$

Let $q$ be an indeterminate and $m,n \in \Z_{\ge 0}$. Set $q_i = q^{d_i}$ for $i\in I$. If $i \in
\Ire$, define
\begin{equation*}
 \begin{aligned}
 \ &[n]_i =\frac{ q^n_{i} - q^{-n}_{i} }{ q_{i} - q^{-1}_{i} },
 \ &[n]_i! = \prod^{n}_{k=1} [k]_i ,
\ &\left[\begin{matrix}m \\ n\\ \end{matrix} \right]_i=  \frac{ [m]_i! }{[m-n]_i! [n]_i! }.
 \end{aligned}
\end{equation*}

\begin{definition} \label{Def: GKM}
The {\em quantum generalized Kac-Moody algebra} $U_q(\g)$ associated
with a Borcherds-Cartan datum $(\mathtt{A},\mathtt{P},\Pi,\Pi^{\vee})$ is the associative
algebra over $\mathbb{Q}(q)$ with ${\bf 1}$ generated by $e_i,f_i$ $(i \in I)$ and
$q^{h}$ $(h \in \mathtt{P}^{\vee})$ satisfying following relations:
\bnum
  \item  $q^0=1$, $q^{h} q^{h'}=q^{h+h'} $ for $ h,h' \in \mathtt{P}^{\vee},$
  \item  $q^{h}e_i q^{-h}= q^{\langle h, \alpha_i \rangle} e_i,
          \ q^{h}f_i q^{-h} = q^{-\langle h, \alpha_i \rangle }f_i$ for $h \in \mathtt{P}^{\vee}, i \in I$,
  \item  $e_if_j - f_je_i =  \delta_{ij} \dfrac{K_i -K^{-1}_i}{q_i- q^{-1}_i }, \ \ \mbox{ where } K_i=q_i^{ h_i},$
  \item  $\displaystyle \sum^{1-a_{ij}}_{r=0} \left[\begin{matrix}1-a_{ij} \\ r\\ \end{matrix} \right]_i e^{1-a_{ij}-r}_i
         e_j e^{r}_i =0 \quad \text{ if } i\in \Ire \text{ and } i \ne j, $
  \item $\displaystyle \sum^{1-a_{ij}}_{r=0} \left[\begin{matrix}1-a_{ij} \\ r\\ \end{matrix} \right]_i f^{1-a_{ij}-r}_if_j
        f^{r}_i=0 \quad \text{ if } i \in \Ire \text{ and } i \ne j, $
  \item $ e_ie_j - e_je_i=0,\ f_if_j-f_jf_i =0  \ \ \mbox{ if }a_{ij}=0.$
\end{enumerate}
\end{definition}

Let $U_{q}^+(\g)$ (resp.\ $U_q^{-}(\g)$) be the subalgebra of $U_q(\g)$ generated by the elements $e_i$
(resp.\ $f_i$).

\begin{definition} \label{Def: integrable module}
We define $\O_{int}$ to be the category consisting of $U_q(\g)$-modules
$V$ satisfying the following properties:
\bnum
 \item $V$ has a {\em weight decomposition} with finite-dimensional weight spaces; i.e.,
 $$V = \bigoplus_{\mu \in \mathtt{P}}V_{\mu}\quad \text{with}\quad  \dim V_{\mu}< \infty  ,$$
 where $V_{\mu}=\{v \in V \mid q^h \, v = q^{\langle h,\mu \rangle}v \text{ for all }
 h \in \mathtt{P}^{\vee} \} $ ,
 \item there are finitely many $\lambda_1, \ldots, \lambda_s \in \mathtt{P}$ such that
 $$\wt(V)\seteq\{\mu \in \mathtt{P} \mid V_{\mu} \neq 0 \} \subset \bigcup^{s}_{i=1}(\lambda_i-\mathtt{Q}^+),$$
 \item the action of $f_i$ on $V$ is locally nilpotent for $i\in \Ire$,
 \item if $i \in \Iim$, then $\langle h_i,\mu \rangle \in\Z_{\ge 0}$ for all $\mu \in \wt(V)$,
 \item if $i \in \Iim$ and $\langle h_i,\mu \rangle=0$, then $f_i V_{\mu}=0$,
 \item if $i \in \Iim$ and $\langle h_i,\mu \rangle \le -a_{ii}$, then $e_i V_{\mu}=0.$
\end{enumerate}
\end{definition}

For $\Lambda \in \mathtt{P}$, a $U_q(\g)$-module $V$
is called a {\em highest weight module} with {\em
highest weight $\Lambda$} and {\em highest weight vector $v_{\Lambda}$} if
 there exists  $v_{\Lambda} \in V$ such that
\begin{align*}
(1) \   V=U_q(\g) v_{\Lambda}, \ \ \ (2) \   q^h v_{\Lambda} =
q^{\langle h, \Lambda \rangle} v_{\Lambda} \ \ \text{for all} \ h
\in \mathtt{P}^{\vee}, \ \ \ (3) \ e_i v_{\Lambda} =0 \ \ \text{ for
all } i \in I.
\end{align*}
For $\Lambda \in \mathtt{P}^+$, let us denote by
$V(\Lambda)$ the $U_q(\g)$-module  generated by $v_{\Lambda}$ with the defining relation:
\eqn
\parbox{60ex}{
\bna
\item $v_\Lambda$ is a highest weight vector of weight $\Lambda$,
\item $f_i^{\langle h_i,\Lambda \rangle +1 } v_{\Lambda}=0$ for any $i \in \Ire$,
 \item  $f_iv_{\Lambda}=0$  if $\langle h_i, \Lambda \rangle =0$.
\ee
}
\eneqn

\medskip
\begin{proposition} [{\cite{BKM98,JKK05,Kang95}}] \label{Prop: HW module} \
\bnum
\item For any $\Lambda \in \mathtt{P}^+$, $V(\Lambda)$ is an irreducible $U_q(\g)$-module.
\item If $V$ is a highest weight module in $\Oint$, then $V$ is isomorphic to $V(\Lambda)$ for some
$\Lambda \in \mathtt{P}^+$.
\item Any module in $\Oint$ is semisimple.
\end{enumerate}
\end{proposition}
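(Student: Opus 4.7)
The plan is to establish (i), (ii), (iii) in order, following the Kac-Moody template while handling the imaginary simple roots with care.

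For (i), I would work with the contravariant (Shapovalov) form $(\cdot\,,\cdot)$ on the quantum Verma module $M(\Lambda)$, normalized by $(v_\Lambda,v_\Lambda)=1$ and $(e_iu,w)=(u,f_iw)$. Its radical is the unique maximal proper submodule, so (i) reduces to showing that the induced form on $V(\Lambda)$ is non-degenerate, equivalently that the radical of the form on $M(\Lambda)$ is generated by the defining relators of $V(\Lambda)$. For $i\in\Ire$, the standard computation gives $(f_i^{n}v_\Lambda, f_i^{n}v_\Lambda)=[n]_i!\prod_{k=0}^{n-1}[\langle h_i,\Lambda\rangle-k]_i$, which first vanishes precisely at $n=\langle h_i,\Lambda\rangle+1$; for $i\in\Iim$ with $\langle h_i,\Lambda\rangle=0$, $(f_iv_\Lambda, f_iv_\Lambda)=(q_i^{\langle h_i,\Lambda\rangle}-q_i^{-\langle h_i,\Lambda\rangle})/(q_i-q_i^{-1})=0$. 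An induction on $\het(\Lambda-\mu)$ then delivers non-degeneracy weight space by weight space.

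For (ii), let $V\in\Oint$ be a highest weight module with highest weight vector $v$ of weight $\Lambda$. Condition (iv) of Definition \ref{Def: integrable module} forces $\langle h_i,\Lambda\rangle\geq 0$ for $i\in\Iim$; for $i\in\Ire$, local nilpotency together with the $\mathfrak{sl}_2$-representation theory generated by $e_i,f_i,q^{h_i}$ yields $\langle h_i,\Lambda\rangle\in\Z_{\geq 0}$ and $f_i^{\langle h_i,\Lambda\rangle+1}v=0$. Condition (v) applied at $\mu=\Lambda$ gives $f_iv=0$ whenever $i\in\Iim$ and $\langle h_i,\Lambda\rangle=0$. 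Thus $v$ satisfies the defining relations of $V(\Lambda)$, yielding a surjection $V(\Lambda)\twoheadrightarrow V$; by part (i), this is an isomorphism and $\Lambda\in\mathtt{P}^+$.

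For (iii), given $M\in\Oint$, let $N$ be the sum of all irreducible submodules of $M$, so by (ii) each summand has the form $V(\Lambda)$ for some $\Lambda\in\mathtt{P}^+$. If $N\neq M$, condition (ii) of Definition \ref{Def: integrable module} guarantees a weight $\mu$ of $M/N$ that is maximal in the order induced by $\mathtt{Q}^+$; any non-zero $\bar v\in(M/N)_\mu$ is killed by every $e_i$, so $U_q(\g)\bar v\cong V(\mu)$ by (ii). To split this extension back inside $M$, I would invoke the quantum Casimir operator for generalized Kac-Moody algebras, which acts as distinct scalars on distinct $V(\Lambda)$'s (an analogue of the $(\Lambda,\Lambda+2\rho)$-formula), combined with an $\mathrm{Ext}$-vanishing argument inside each Casimir eigenspace using the contravariant form. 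This produces a copy of $V(\mu)$ inside $M$, contradicting the maximality of $N$. The main obstacle is precisely the construction of the Casimir in the generalized Kac-Moody setting: one must work inside a suitable completion of $U_q(\g)$ with a modified denominator identity accommodating imaginary simple roots, and this is where the technical input from \cite{BKM98,JKK05,Kang95} is indispensable.
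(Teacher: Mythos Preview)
The paper does not prove this proposition at all: it is stated with citations to \cite{BKM98,JKK05,Kang95} and used as background. So there is no ``paper's own proof'' to compare against; the result is imported wholesale from the literature.

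That said, your sketch is a faithful outline of how those references actually proceed. Part (i) via the contravariant form and its radical is the approach in \cite{Kang95}; part (ii) is the routine consequence you describe; and part (iii) via a quantum Casimir element acting with distinct eigenvalues is how semisimplicity is obtained in \cite{BKM98,Kang95}, with the completion and the modified denominator identity for imaginary simple roots being exactly the technical points those papers address. One caveat on (i): the ``induction on $\het(\Lambda-\mu)$'' you invoke to pass from the vanishing at the defining relators to non-degeneracy on each weight space of $V(\Lambda)$ hides real work in the generalized setting---you need the character formula or an equivalent input to know that the quotient by the submodule generated by those relators has the correct weight multiplicities, and this is where \cite{Kang95} does the heavy lifting. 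As a proof \emph{proposal} pointing to the literature, your write-up is on target; as a self-contained argument it is not, but neither is the paper's treatment.
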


Let $\phi$ be the anti-automorphism of $U_q(\g)$ given by
$$ \phi(e_i) = f_i, \ \ \phi(f_i) = e_i  \ \text{ and } \ \phi(q^h)= q^h. $$
In \cite{KOP11a}, it was shown that there exists a unique
non-degenerate symmetric bilinear form $( \ , \ )$ on $V(\Lambda)$
$(\Lambda \in \mathtt{P}^+)$ 
satisfying
\begin{align} \label{Eq: Pairing}
  (v_{\Lambda},v_{\Lambda})=1, \ \
  (x u,v)=(u,\phi(x)v)\ \text{for $x \in U_q(\g)$ and $u,v\in V(\Lambda)$.}
 \end{align}
Set $\A=\Z[q,q^{-1}]$. We define the $\A$-form $V_\A(\Lambda)$ of $V(\Lambda)$ to be
$$ V_\A(\Lambda) = U_\A(\g) v_\Lambda,$$
where $U_\A(\g)$ is the $\A$-subalgebra of $U_q(\g)$ defined in
\cite[Section 9]{JKK05}.

The dual of $V_\A(\Lambda)$ is defined to be
$$ V_\A(\Lambda)^{\vee} = \{ v \in V(\Lambda) \ | \ (u,v) \in \A \text{ for all } u \in V_\A(\Lambda) \}.$$

\vskip 2em

\section{The Khovanov-Lauda-Rouquier algebras for generalized Kac-Moody algebras}
\label{Sec: KLR corr QGKM}

We take a graded commutative ring $\k= \oplus_{n \in \Z_{\ge 0}} \k_{n}$
as a base ring.
For a given Borcherds-Cartan datum $(\mathtt{A},\mathtt{P},\Pi,\Pi^{\vee})$,
we take $\Q_{i,j}(u,v) (i,j \in I)$ in $\k[u,v]$ 
such that $\Q_{i,j}(u,v)=\Q_{j,i}(v,u)$ and
$\Q_{i,j}(u,v)$ has the form
\begin{align*}
\Q_{i,j}(u,v)=
\begin{cases}
\quad  0 & \text{ if } i =j, \\
\displaystyle \sum_{d_ip + d_jq \le -(\alpha_i | \alpha_j)} t_{i,j}^{p,q} u^p v^q
& \text{ if } i \neq j,
\end{cases}
\end{align*}
where $ t^{-a_{ij},0}_{i,j} \in \k_0^{\times} $
and $t_{i,j}^{p,q} \in \k_{-2((\alpha_i|\alpha_j)+d_ip+d_jq)}$ with $t_{i,j}^{p,q} = t_{j,i}^{q,p}$.

For all $i \in I$, we take polynomials $\P_i(u,v)$ in
$\k[u,v]$ which have the form
\begin{equation} \label{Def: P_i}
\begin{aligned}
\P_i(u,v)=
\sum_{p+q \le 1-\frac{a_{ii}}{2}} w_i^{p,q} u^p v^q,  
\end{aligned}
\end{equation}
where $w^{1-\frac{a_{ii}}{2},0}_{i},w^{0,1-\frac{a_{ii}}{2}}_{i} \in
\k_0^{\times}$ and $w_i^{p,q}\in \k_{2d_i(1-p-q-\frac{a_{ii}}{2})}$.

\begin{remark} \label{Rem: condition of P}
In \cite{KOP11}, it was assumed that $\P_i(u,v)$ is a symmetric
homogeneous polynomial. But, in this paper, we do not assume that
$\P_i(u,v)$ is symmetric. Instead, we put more restrictions on the
leading terms of $\P_i(u,v)$.  Accordingly, the defining relations
of Khovanov-Lauda-Rouquier algebras in Definition~\ref{def:KLR}
below are modified from the ones in \cite{KOP11}.  This choice
will be used in a critical way in the proof of Lemma \ref{Lem:
nilpotency} and Lemma \ref{Lem: Rel between F and S}. The main results of \cite{KOP11} are
still valid after this modification.
\end{remark}

We denote by $S_n= \langle s_1, \ldots, s_{n-1} \rangle$ the
symmetric group on $n$ letters, where $s_i=(i,i+1)$ is the
transposition. Then $S_n$ acts on $I^n$ and $\k[x_1,\ldots,x_n]$ in
a natural way.

We define the operator $\partial_a$ on $\k[x_1,\ldots,x_n]$, by
$$ \partial_{a,b}f = \dfrac{s_{a,b}f-f}{x_a-x_b}, \quad \partial_{a}:=\partial_{a,a+1},$$
where $s_{a,b}=(a,b)$ is the transposition.

For the sake of simplicity, we assume that $I$ is a finite set.
\begin{definition}[{\cite{KOP11}}] \label{def:KLR} \
The Khovanov-Lauda-Rouquier algebra $R(n)$ of degree $n$ associated with
the data $(\mathtt{A},\mathtt{P},\Pi,\Pi^{\vee})$, $(\Q_{i,j})_{i,j
\in
 I}$ and $(\P_i)_{i \in I}$ is the associative algebra over $\k$ generated by $e(\nu)$ $(\nu \in I^n)$, $x_k$ $(
 1 \le k \le n)$, $\tau_{\ell}$ $(1 \le \ell \le n-1)$ with following relations:
\begin{equation} \label{Eq: Relation1}
 \begin{aligned}
& e(\nu)e(\nu')=\delta_{\nu,\nu'}e(\nu), \quad \sum_{\nu \in I^n}e(\nu)=1, \\
& x_k x_l=x_l x_k, \quad x_k e(\nu) =e(\nu) x_k, \\
& \tau_\ell e(\nu)=e(s_\ell \nu) \tau_\ell, \quad \tau_k \tau_\ell = \tau_\ell \tau_k \text{ if } |k-\ell|>1, \\
& \tau^2_k e(\nu) = \begin{cases}
(\partial_k \P_{\nu_k}(x_k,x_{k+1})) \tau_k e(\nu) & \text{ if } \nu_k = \nu_{k+1}, \\
\Q_{\nu_k,\nu_{k+1}}(x_k,x_{k+1})e(\nu), & \text{ if } \nu_k \neq \nu_{k+1}.
\end{cases}
\end{aligned}
\end{equation}
\begin{equation} \label{Eq: Relation2}
 \begin{aligned}
& (\tau_k x_\ell - x_{s_k(\ell)}\tau_k)e(\nu)=
\begin{cases}
-\P_{\nu_k}(x_k,x_{k+1})e(\nu) & \text{ if } \ell=k, \ \nu_k=\nu_{k+1}, \\
\P_{\nu_k}(x_k,x_{k+1})e(\nu) & \text{ if } \ell=k+1, \ \nu_k=\nu_{k+1}, \\
\qquad \qquad 0 & \text{ otherwise.}
\end{cases}
\end{aligned}
\end{equation}

\begin{equation} \label{Eq: Braid Relation}
 \begin{aligned}
&(\tau_{k+1}\tau_{k}\tau_{k+1}-\tau_{k}\tau_{k+1}\tau_{k})e(\nu)  \\
& =
\begin{cases}
\P_{\nu_k}(x_k,x_{k+2})\overline{\Q}_{\nu_k,\nu_{k+1}}(x_k,x_{k+1},x_{k+2})
e(\nu)
& \text{ if } \nu_{k}=\nu_{k+2}\neq \nu_{k+1},\\
\overline{\P}'_{\nu_k}(x_k,x_{k+1},x_{k+2})\tau_k e(\nu) +
\overline{\P}''_{\nu_k}(x_k,x_{k+1},x_{k+2})\tau_{k+1}e(\nu)
& \text{ if } \nu_{k}=\nu_{k+1}= \nu_{k+2},\\
\qquad \qquad  0 & \text{ otherwise,}
\end{cases}
\end{aligned}
\end{equation}
where
\begin{align*}
\overline{\P}'_i(u,v,w)=\overline{\P}'_i(v,u,w) &\seteq
\frac{\mathcal{P}_{i}(v,u)\mathcal{P}_{i}(u,w)}{(u-v)(u-w)} +
\frac{\mathcal{P}_{i}(u,w)\mathcal{P}_{i}(v,w)}{(u-w)(v-w)} -
\frac{\mathcal{P}_{i}(u,v)\mathcal{P}_{i}(v,w)}{(u-v)(v-w)}, \\
\overline{\P}''_i(u,v,w)=\overline{\P}''_i(u,w,v) &\seteq
-\frac{\mathcal{P}_{i}(u,v)\mathcal{P}_{i}(u,w)}{(u-v)(u-w)}
-\frac{\mathcal{P}_{i}(u,w)\mathcal{P}_{i}(w,v)}{(u-w)(v-w)}+
\frac{\mathcal{P}_{i}(u,v)\mathcal{P}_{i}(v,w)}{(u-v)(v-w)}, \\
\overline{\mathcal{Q}}_{i,j}(u,v,w)&\seteq
\dfrac{\mathcal{Q}_{i,j}(u,v) - \mathcal{Q}_{i,j}(w,v)}{u-w}.
\end{align*}
\end{definition}

\noindent The $\Z$-grading on $R(n)$ is given by
$$ \deg(e(\nu)) =0 , \quad \deg(x_ke(\nu))=2 d_{\nu_k}, \quad \deg(\tau_\ell e(\nu)) =
-(\alpha_{\nu_\ell}|\alpha_{\nu_{\ell+1}})$$ for all $\nu \in I^{n}$
, $1 \le k \le n$ and $1 \le \ell < n$.

For $\nu=(\nu_1,\ldots,\nu_n) \in I^{n}$ and $1 \le m \le n$, we define
{\allowdisplaybreaks
\begin{align*}
&\nu_{<m} = (\nu_1,\ldots,\nu_{m-1}) , \qquad \nu_{\le m} = (\nu_1,\ldots,\nu_{m}), \\
&\nu_{>m} = (\nu_{m+1},\ldots,\nu_{n}) , \qquad \nu_{\ge m} = (\nu_m,\ldots,\nu_{n}).
\end{align*}

For pairwise distinct $a,b,c \in \{ 1, \ldots n \}$, let us define
\begin{align*}
& 
 e_{a,b}= \sum_{ \substack{\nu \in I^n, \\ \nu_a=\nu_b } }e(\nu), \quad
\P_{a,b}= \sum_{ \substack{\nu \in I^n, \\ \nu_a=\nu_b } } \P_{\nu_a}(x_a,x_b)e(\nu), \\
& \overline{\Q}_{a,b,c}= \sum_{ \substack{\nu \in I^n, \\ \nu_a=\nu_c \neq \nu_b} }
    \dfrac{\Q_{\nu_a,\nu_b}(x_a,x_b)-\Q_{\nu_a,\nu_b}(x_c,x_b)}{x_a -x_c} e(\nu),
    \quad \overline{\Q}_{a}\seteq\overline{\Q}_{a,a+1,a+2},\\
& \overline{\P}'_{a,b,c}= \sum_{ \substack{\nu \in I^n, \\ \nu_a=\nu_{b}=\nu_{c}} }
      \overline{\P}'_{\nu_a}(x_a,x_b,x_c)e(\nu), \quad \overline{\P}'_{a}\seteq\overline{\P}'_{a,a+1,a+2},\\
& \overline{\P}''_{a,b,c}= \sum_{ \substack{\nu \in I^n, \\ \nu_a=\nu_{b}=\nu_{c}} }
      \overline{\P}''_{\nu_a}(x_a,x_b,x_c)e(\nu), \quad \overline{\P}''_{a}\seteq\overline{\P}''_{a,a+1,a+2}.
\end{align*}
}
Then we have
$$ 
\tau_{a+1}\tau_{a}\tau_{a+1}-\tau_{a}\tau_{a+1}\tau_{a}=
\overline{\Q}_a \P_{a,a+2}+ \overline{\P}'_a \tau_a +
\overline{\P}''_a \tau_{a+1}.$$
Note that we have $\overline{\P}'_a
\tau_a=\tau_a\overline{\P}'_a$ and $\overline{\P}''_a
\tau_{a+1}=\tau_{a+1}\overline{\P}''_a $ by the formula \eqref{Eq:
partial} below.

We define the operator, also denoted by $\partial_{a,b}$, on $\oplus_{\nu \in I^n} \k[x_1,\ldots,x_n]e(\nu)$, by
$$ \partial_{a,b}f = \dfrac{s_{a,b}f-f}{x_a-x_b}e_{a,b}, \quad \partial_a\seteq\partial_{a,a+1}.$$
Then we obtain
\begin{align} \label{Eq: partial}
\tau_a f - (s_af)\tau_a =f \tau_a -\tau_a(s_a f) = (\partial_a f) \P_{a,a+1}.
\end{align}

For $\beta \in \mathtt{Q}^+$ with $|\beta|=n$, we set
$$ I^\beta=\{ \nu=(\nu_1,\ldots,\nu_n) \in I^n \ | \ \alpha_{\nu_1}+ \cdots + \alpha_{\nu_n}=\beta \}.$$
We define
\begin{align*}
& R(m,n)=R(m)\otimes_{\k}R(n)\subset R(m+n), \\
& e(n)=\sum_{\nu \in I^n}e(\nu), \quad e(\beta)=\sum_{\nu \in I^\beta}e(\nu), \quad R(\beta)=e(\beta)R(n), \\
& e(n,i)=\sum_{ \substack{\nu \in I^{n+1}, \\ \nu_{n+1}=i} }e(\nu), \quad
  e(i,n)=\sum_{ \substack{\nu \in I^{n+1}, \\ \nu_{1}=i} }e(\nu),\\
& e(\beta,i) = e(\beta+\alpha_i)e(n,i), \quad  e(i,\beta) = e(\beta+\alpha_i)e(i,n).
\end{align*}
Then $R= \bigoplus_{\alpha \in \mathtt{Q}^+} R(\alpha)$.

\smallskip
\begin{proposition}[{\cite{KOP11}}] \label{Prop: essence of KOP1} \
\bnum
\item $R(\alpha)$ is noetherian.
\item There are only finitely many irreducible graded $R(\alpha)$-modules up to isomorphism and grading shift.
Moreover, all the irreducible graded $R(\alpha)$-modules are
finite-dimensional.
\item The Krull-Schmidt unique direct sum decomposition property holds for all finitely generated graded $R(\alpha)$-modules.
\end{enumerate}
\end{proposition}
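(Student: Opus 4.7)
The plan is to follow the standard Khovanov-Lauda-Rouquier strategy from \cite{KL09, R08, KOP11}, adapted to the modified polynomials $\mathcal{P}_{i}$ of the present paper. First I would construct a faithful polynomial representation of $R(\beta)$ on $\bigoplus_{\nu \in I^{\beta}} \k[x_{1},\ldots,x_{n}]\,e(\nu)$: the $x_{k}$ act by multiplication, $e(\nu)$ projects, and $\tau_{k}$ acts as a twisted divided-difference operator, using $\mathcal{P}_{\nu_{k}}$ when $\nu_{k} = \nu_{k+1}$ and $\mathcal{Q}_{\nu_{k},\nu_{k+1}}$ otherwise. The bulk of this step is verifying the quadratic and braid relations \eqref{Eq: Relation1}--\eqref{Eq: Braid Relation}; in particular, the equal-index braid relation forces exactly the identities packaged into $\overline{\P}'_{i}$ and $\overline{\P}''_{i}$. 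From faithfulness one deduces the PBW basis $\{\tau_{w}\,x^{a}\,e(\nu) : w \in S_{n},\ a \in \Z_{\ge 0}^{n},\ \nu \in I^{\beta}\}$ by a Bruhat-order reduction: the defining relations allow one to express any product in the generators as a $P$-linear combination of the $\tau_{w}$, and faithfulness rules out further relations. In particular, $R(\beta)$ is free of rank $n!$ over the commutative subalgebra $P = \bigoplus_{\nu \in I^{\beta}} \k[x_{1},\ldots,x_{n}]\,e(\nu)$.

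Part (i) is then immediate: $P$ is noetherian and $R(\beta)$ is module-finite over $P$, hence left and right noetherian. For part (ii), I would identify a large central subalgebra. By the formula $\tau_{a} f - (s_{a}f)\tau_{a} = (\partial_{a}f)\,\mathcal{P}_{a,a+1}$ in \eqref{Eq: partial}, any $S_{n}$-symmetric polynomial $p \in \k[x_{1},\ldots,x_{n}]$ satisfies $\partial_{a} p = 0$ and therefore commutes with every $\tau_{a}$; since symmetric polynomials commute trivially with the $x_{k}$ and the $e(\nu)$, the subring $Z_{0} \seteq \bigoplus_{\nu} \k[x_{1},\ldots,x_{n}]^{S_{n}}\,e(\nu)$ lies in $Z(R(\beta))$. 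The PBW basis shows $R(\beta)$ is finitely generated over $Z_{0}$. Letting $\mathfrak{m} \subset Z_{0}$ denote the graded ideal generated by the positive-degree elementary symmetric functions in $x_{1}, \ldots, x_{n}$, the quotient $R(\beta)/\mathfrak{m} R(\beta)$ is a finite-dimensional graded $\k_{0}$-algebra, and the graded Schur lemma forces every element of $\mathfrak{m}$ to act by zero on any graded simple $L$. Hence $L$ is a graded simple over $R(\beta)/\mathfrak{m} R(\beta)$, giving simultaneously the finite-dimensionality of $L$ and the finiteness, up to grading shift, of the isomorphism classes of graded simples.

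Part (iii) follows from the graded Krull-Schmidt theorem: endomorphism rings of finitely generated graded $R(\beta)$-modules are noetherian by (i), and the description of graded simples from (ii) implies that the endomorphism ring of any indecomposable finitely generated graded module is graded local, from which the unique direct sum decomposition property follows.

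The main obstacle in the whole argument is the verification of the braid relation \eqref{Eq: Braid Relation} inside the polynomial representation with the \emph{non-symmetric} $\mathcal{P}_{i}$ of \eqref{Def: P_i}: this reduces to a partial-fraction identity in $x_{k}, x_{k+1}, x_{k+2}$, and it is precisely this identity that dictates the explicit form of $\overline{\P}'_{i}$ and $\overline{\P}''_{i}$ as well as the restrictions on the leading coefficients of $\mathcal{P}_{i}$. Once the PBW basis has been established, parts (i)--(iii) are formal consequences of module-finiteness over a noetherian commutative ring, exactly as in \cite{KOP11}.
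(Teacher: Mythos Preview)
The paper does not prove this proposition at all; it is quoted from \cite{KOP11} without argument. Your outline is essentially the standard proof given there (PBW basis via a faithful polynomial representation, then module-finiteness over a commutative subalgebra), so in spirit you are reproducing the cited proof rather than diverging from the paper.

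One genuine slip: the subring you call $Z_{0} = \bigoplus_{\nu} \k[x_{1},\ldots,x_{n}]^{S_{n}}\,e(\nu)$ is \emph{not} central if the direct sum is read as allowing a different symmetric polynomial on each idempotent. Indeed $\tau_{a}\bigl(f\,e(\nu)\bigr) = f\,e(s_{a}\nu)\,\tau_{a}$ for symmetric $f$, so $f\,e(\nu)$ fails to commute with $\tau_{a}$ whenever $s_{a}\nu \neq \nu$. What is central is only the diagonal copy $\k[x_{1},\ldots,x_{n}]^{S_{n}}\cdot 1$, since $I^{\beta}$ is a single $S_{n}$-orbit. This is enough for your argument: $R(\beta)$ is still finite over it, and each $e_{k}$ ($k\ge 1$), though not homogeneous, is a sum of strictly positive-degree pieces, so on a graded simple $L$ bounded below one has $e_{k}L \subsetneq L$ and hence $e_{k}L = 0$. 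Alternatively, and more cleanly in the graded setting, one can use the genuinely homogeneous central elements $z_{i,m} = \sum_{\nu}\sum_{k:\nu_{k}=i} x_{k}^{m}\,e(\nu)$ of degree $2md_{i}$.

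For (iii) your justification is slightly off: what is actually needed is that $\mathrm{End}_{R(\beta)}(M)_{0}$ is a finite-dimensional $\k_{0}$-algebra for any finitely generated graded $M$, which follows from module-finiteness of $R(\beta)$ over a central noetherian subalgebra with degree-zero part $\k_{0}$; indecomposability of $M$ then forces $\mathrm{End}(M)_{0}$ to have no nontrivial idempotents, hence to be local, and graded Krull--Schmidt follows. The finiteness of simples from (ii) is not the input here.
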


In the rest of this section, assume that $\k_0$ is a field.
Let $\Mod(R(\alpha))$ (resp.\ $\Proj(R(\alpha))$, $\Rep(R(\alpha))$) be the category of arbitrary
(resp.\ finitely generated projective, finite-dimensional over $\k_0$)
graded left $R(\alpha)$-modules. The morphisms in these
categories are degree preserving homomorphisms. Define
$$ [\Proj(R)]\seteq\bigoplus_{\alpha \in \mathtt{Q}^+} [\Proj(R(\alpha))] \text{ and }
[\Rep(R)]\seteq\bigoplus_{\alpha \in \mathtt{Q}^+}
[\Rep(R(\alpha))], $$ where $[\Proj(R(\alpha))]$ (resp.
$[\Rep(R(\alpha))]$) is the Grothendieck group of $\Proj(R(\alpha))$
(resp. $\Rep(R(\alpha))$). We can define the degree shift functors
$q^m$ ($m \in \Z$) on $\Mod(R(\alpha))$ given as follows: For $M =
\oplus_{k \in \Z} M_k$,
$$ q^m(M)\seteq M\langle -m\rangle \quad\text{where 
$M\langle m \rangle_k=M_{k+m}$.}$$
Then one can define $\A$-module structures on $[\Proj(R)]$ and
$[\Rep(R)]$. The following theorem provides a
categorification of quantum generalized Kac-Moody algebras.

\begin{theorem}[{\cite{KOP11}}] \label{Thm: main result of KOP11}
There is an injective $\A$-algebra homomorphism
$$ U^-_\A(\g) \hookrightarrow [\Proj(R)].$$
It is an isomorphism if $a_{ii} \neq 0$ for any $i \in I$.
\end{theorem}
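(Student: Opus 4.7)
The plan is to adapt the categorification argument of Khovanov-Lauda \cite{KL09} and Rouquier \cite{R08} from the symmetrizable Kac-Moody setting to generalized Kac-Moody algebras, keeping track of the imaginary simple roots and of the polynomials $\P_i$. First I would define the induction functor
\begin{equation*}
F_i \cl \Proj(R(\beta)) \longrightarrow \Proj(R(\beta+\alpha_i)), \qquad F_i(M) = R(\beta+\alpha_i)e(\beta,i)\otimes_{R(\beta)} M,
\end{equation*}
together with a suitable grading shift, and verify via a PBW-type basis theorem for $R(\beta)$ (proved by exhibiting a faithful polynomial representation, as in \cite{KL09, R08} and extended to the Borcherds-Cartan case in \cite{KOP11}) that $R(\beta+\alpha_i)e(\beta,i)$ is free of finite rank as a right $R(\beta)$-module. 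Consequently $F_i$ is exact, preserves $\Proj(R)$, and descends to an $\A$-linear endomorphism of $[\Proj(R)]$.

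Next I would verify that these operators satisfy the defining relations of $U_\A^-(\g)$ on $[\Proj(R)]$. Commutation $F_iF_j=F_jF_i$ when $a_{ij}=0$ is forced by $\Q_{i,j}=0$, since then $\tau$ provides an isomorphism $F_iF_j \simeq F_jF_i$. The quantum Serre relation for $i\in \Ire$ and $j\ne i$ is realized by an exact complex of projective bimodules built from the nilHecke action on $e(i^{1-a_{ij}},j)$, exactly as in the Kac-Moody case; since $a_{ii}=2$ for real $i$, $\P_i$ degenerates to a constant and the nilHecke computation goes through verbatim. Because the defining relations of $U_q^-(\g)$ for a generalized Kac-Moody algebra consist only of Serre relations for real indices and pairwise commutations when $a_{ij}=0$, one obtains an $\A$-algebra homomorphism $\gamma\cl U_\A^-(\g) \to [\Proj(R)]$ sending each monomial $f_{i_1}\cdots f_{i_n}$ to the class of the corresponding iterated induction from $R(0)=\k$.

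Injectivity would follow from a bilinear form argument. On the algebraic side, Kashiwara's non-degenerate symmetric bilinear form on $U_q^-(\g)$ is characterized by $(1,1)=1$ and an adjointness relation between $f_i$ and Kashiwara's operator $e_i'$. On the categorical side, the pairing
\begin{equation*}
\la [P],[S] \ra \;=\; \sum_{k\in\Z} q^k \dim_{\k_0}\Hom_{R(\beta)}(P, S\la k\ra)
\end{equation*}
identifies $[\Rep(R)]$ with the graded $\A$-dual of $[\Proj(R)]$. Using the biadjunction (up to degree shift) between $F_i$ and the restriction functor $E_i(N)=e(\beta,i)N$, one checks that $\gamma$ intertwines the two forms up to normalization. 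Non-degeneracy of Kashiwara's form then forces $\gamma$ to be injective.

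For surjectivity under the hypothesis that $a_{ii}\ne 0$ for every $i\in I$, I would compare graded characters in each weight. A PBW-type basis gives the graded rank of $U_\A^-(\g)_{-\beta}$; by Proposition~\ref{Prop: essence of KOP1}(iii), $[\Proj(R(\beta))]$ is free over $\A$ on the isomorphism classes of indecomposable graded projectives, and the crystal argument of Lauda-Vazirani (adapted to the Borcherds setting in \cite{KOP11}) puts these in bijection with $B(\infty)_{-\beta}$. When no $a_{ii}$ vanishes, $B(\infty)$ has the same graded character as $U_q^-(\g)$, so $\gamma$ is surjective. The main obstacle lies precisely in this final step: when some $a_{ii}=0$, the corresponding factor of $R(n\alpha_i)$ produces indecomposable graded projectives with no preimage under $\gamma$, so equality of graded ranks breaks and surjectivity genuinely fails, matching the qualification in the theorem.
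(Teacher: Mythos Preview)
The paper does not prove this theorem; it is quoted from \cite{KOP11} and used as input. So there is no in-paper argument to compare against, and your task was really to reconstruct the strategy of \cite{KOP11}. Your outline follows the Khovanov--Lauda/Rouquier template faithfully and is essentially what \cite{KOP11} does: build the map by verifying the $U_q^-$-relations on induction functors, prove injectivity by matching the categorical pairing with Kashiwara's form, and deduce surjectivity by a rank count tied to the crystal $B(\infty)$.

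One concrete slip: when $i\ne j$ and $a_{ij}=0$, the polynomial $\Q_{i,j}$ is \emph{not} zero. By the definition in Section~\ref{Sec: KLR corr QGKM} it is an invertible constant $t_{i,j}^{0,0}\in\k_0^\times$ (the leading coefficient condition forces this), and it is precisely this invertibility that makes $\tau_k e(\nu)$ an isomorphism between $F_iF_j$ and $F_jF_i$. Writing ``$\Q_{i,j}=0$'' would instead give $\tau_k^2=0$, which yields no such isomorphism. The case $\Q_{i,i}=0$ is a separate matter and is not the source of the commutation relation.

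A second point worth tightening is the surjectivity step. Saying ``the indecomposable projectives are in bijection with $B(\infty)_{-\beta}$, and $B(\infty)$ has the same character as $U_q^-(\g)$'' is correct but hides where the hypothesis $a_{ii}\neq 0$ enters. The crystal-theoretic input you need from \cite{JKK05} is that, under this hypothesis, the canonical/global basis of $U_q^-(\g)$ exists and is parametrized by $B(\infty)$; this is what makes $U_\A^-(\g)_{-\beta}$ a free $\A$-module of the expected rank and lets the rank comparison go through. When some $a_{ii}=0$, the algebra $R(n\alpha_i)$ is a polynomial ring (since $\P_i$ becomes an invertible constant and $\tau$ satisfies $\tau^2=0$, $\tau x-x\tau\in\k_0^\times$, i.e.\ a nilHecke-type relation forces extra projectives), and $[\Proj(R(n\alpha_i))]$ is strictly larger than the image of $f_i^n$, so surjectivity fails---your diagnosis of the obstruction is right, but the mechanism is the extra idempotents in $R(n\alpha_i)$, not merely ``extra projectives with no preimage.''
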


\vskip 2em

\section{The Functors $E_i$ and $F_i$ on $\Mod(R)$.} \label{Sec: Fuctors on Mod R}


{}From the natural embedding $R(\beta) \otimes_{\k} R(\alpha_i) \hookrightarrow R(\beta+\alpha_i)$, we obtain the
functors
\begin{align*}
& E_i\cl \Mod(R(\beta+\alpha_i)) \to \Mod(R(\beta)),\\
& F_i\cl \Mod(R(\beta)) \to \Mod(R(\beta+\alpha_i))
\end{align*}
given by
\begin{align*}
 E_i(M)=M &\mapsto e(\beta,i)M \simeq e(\beta,i)R(\beta+\alpha_i)\otimes_{R(\beta+\alpha_i)}M, \\
 F_i(N)=N &\mapsto R(\beta+\alpha_i)e(\beta,i)\otimes_{R(\beta)}N
\end{align*}
for $M \in \Mod(R(\beta+\alpha_i))$ and $N \in \Mod(R(\beta))$.

Let $\xi_n\cl R(n) \to R(n+1)$ be the algebra monomorphism given by
$$ \xi_n(x_k) = x_{k+1}, \quad \xi_n(\tau_\ell)=\tau_{\ell+1}, \quad
   \xi_n(e(\nu))= \sum_{i \in I} e(i,\nu) $$
for all $1 \le k \le n$, $1 \le \ell <n$ and $\nu \in I^n$. Let $R^1(n)$ be the image of $\xi_n$.
Then for each $i \in I$, we can define the functor
$$ \overline{F}_i\cl \Mod(R(\beta)) \to \Mod(R(\beta)+\alpha_i) \text{ by } N \mapsto
R(\beta+\alpha_i)e(i,\beta)\otimes_{R(\beta)}N.$$
Here, the right $R(\beta)$-module structure on $R(\beta+\alpha_i)e(i,\beta)$ is given by the embedding
$$ R(\beta) \overset{\sim}{\to} R^1(\beta) \hookrightarrow R(\beta+\alpha_i).$$

From now on, we will investigate the relationship among these
functors.

\begin{proposition}[{\cite[Corollary 2.5]{KOP11}}] We have a decomposition
$$R(n+1)= \bigoplus_{a=1}^{n+1}R(n,1)\tau_n \cdots \tau_a = \bigoplus_{a=1}^{n+1}R(n)\otimes\k[x_{n+1}] \tau_n \cdots \tau_a.$$
Furthermore, $R(n+1)$ is a free $R(n,1)$-module of rank $n+1$.
\end{proposition}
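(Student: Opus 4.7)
The plan is to reduce the statement to a PBW-type basis theorem for $R(n+1)$ combined with the standard coset decomposition of $S_{n+1}$ modulo the parabolic subgroup $S_n$.

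First I would invoke the KLR basis theorem (adapted to Borcherds--Cartan data in \cite{KOP11} along the lines of \cite{KL09,R08}), which exhibits $R(n+1)$ as a free left module over the polynomial subalgebra $\bigoplus_{\nu \in I^{n+1}} \k[x_1,\ldots,x_{n+1}]\,e(\nu)$ with basis $\{\tau_w \mid w \in S_{n+1}\}$; here for each $w$ a reduced expression has been fixed and $\tau_w$ denotes the corresponding product of $\tau_k$'s. The same theorem, applied to $R(n,1) = R(n)\otimes_\k R(1)$, produces a basis $\{\tau_{w'} \mid w' \in S_n\}$ over the same polynomial ring. Using $R(1) = \bigoplus_{i \in I} \k[x_{n+1}]\,e(i)$ and absorbing the idempotents, one rewrites $R(n,1) = R(n)\otimes_\k \k[x_{n+1}]$, which gives the second equality in the statement.

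Next I set $w_a \seteq s_n s_{n-1}\cdots s_a$ for $a=1,\ldots,n+1$ (with $w_{n+1}=e$). Identifying $S_n$ with the stabilizer of $n+1$ in $S_{n+1}$, the $w_a$'s are the minimal-length right coset representatives for $S_n\backslash S_{n+1}$: every $w \in S_{n+1}$ decomposes uniquely as $w = w' w_a$ with $w' \in S_n$ and $a = w^{-1}(n+1)$, and the factorization is length-additive, $\ell(w)=\ell(w')+(n+1-a)$. Concatenating any reduced expression of $w'$ with $s_n s_{n-1}\cdots s_a$ therefore yields a reduced expression of $w$, so in the PBW basis one may take $\tau_w = \tau_{w'}\cdot \tau_n\tau_{n-1}\cdots\tau_a$. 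Grouping the basis of $R(n+1)$ by the value of $a$ then gives
\[
R(n+1) \;=\; \bigoplus_{a=1}^{n+1}\Bigl(\bigoplus_{w'\in S_n}\bigoplus_{\nu \in I^{n+1}}\k[x_1,\ldots,x_{n+1}]\,e(\nu)\,\tau_{w'}\Bigr)\tau_n\cdots\tau_a,
\]
and the inner double sum is precisely $R(n,1)$ by the basis theorem applied to that subalgebra, yielding both the direct sum decomposition and the freeness of rank $n+1$.

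The real difficulty is the PBW basis theorem invoked above: one must rule out hidden relations among the $\tau_w$'s arising from the inhomogeneous corrections in the defining relations \eqref{Eq: Relation1}--\eqref{Eq: Braid Relation}, in particular those involving $\mathcal{P}_i$, $\overline{\mathcal{P}}'_i$, and $\overline{\mathcal{P}}''_i$. The standard strategy is to construct a faithful action of $R(n+1)$ on $\bigoplus_\nu \k[x_1,\ldots,x_{n+1}]\,e(\nu)$ in which $x_k$ acts by multiplication and $\tau_k$ by a Demazure-style operator twisted by $\mathcal{P}_i$ (when the two colors match) or by $\mathcal{Q}_{i,j}$ (otherwise), and then to deduce linear independence of the proposed $\tau_w$'s via a leading-term argument filtered by the length function on $S_{n+1}$. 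The leading-coefficient restriction \eqref{Def: P_i} on $\mathcal{P}_i$ is precisely what makes this leading-term computation succeed. Once the basis theorem is in hand, the coset argument above is purely bookkeeping.
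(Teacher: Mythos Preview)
Your approach is correct and is exactly the standard argument. Note, however, that the paper does not actually give a proof of this proposition: it is stated with a citation to \cite[Corollary~2.5]{KOP11} and no proof is supplied here. Your reconstruction---PBW basis theorem for $R(n+1)$ over its polynomial subalgebra, minimal-length coset representatives $w_a=s_n\cdots s_a$ for $S_n\backslash S_{n+1}$, length-additive factorization $w=w'w_a$ allowing the choice $\tau_w=\tau_{w'}\tau_n\cdots\tau_a$, then regrouping---is precisely the route taken in \cite{KOP11} (following \cite{KL09,R08}). Your identification of the PBW theorem as the nontrivial ingredient, and of the leading-term conditions on $\mathcal{P}_i$ as what makes the faithful polynomial representation work in the Borcherds--Cartan setting, is also accurate.
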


\begin{lemma} \label{Lem: Mod R(n,1)}
For $1 \le a \le n$, $f(x_n) \in \k[x_n]$ and $y \in R(n)$, we have
$$ \tau_a \cdots \tau_{n-1}f(x_n)\tau_n y \equiv \tau_a \cdots \tau_{n-1}\tau_n f(x_{n+1})y \
\quad {\rm mod} \ R(n,1).$$
\end{lemma}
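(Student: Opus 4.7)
The plan is to apply the commutation identity \eqref{Eq: partial} at position $a = n$ with $f = f(x_n)$, namely
$$f(x_n)\tau_n - \tau_n f(x_{n+1}) = f(x_n)\tau_n - \tau_n (s_n f(x_n)) = (\partial_n f(x_n))\,\P_{n,n+1},$$
and then left-multiply by $\tau_a\cdots\tau_{n-1}$ and right-multiply by $y$. The resulting identity is
$$\tau_a\cdots\tau_{n-1} f(x_n) \tau_n y - \tau_a\cdots\tau_{n-1}\tau_n f(x_{n+1}) y = \tau_a\cdots\tau_{n-1}\bl(\partial_n f(x_n))\,\P_{n,n+1}\br y,$$
so everything reduces to showing that the right-hand side lies in $R(n,1)$.

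To verify this, I would note that $\partial_n f(x_n) = \frac{f(x_{n+1}) - f(x_n)}{x_n - x_{n+1}}$ is an honest polynomial in $x_n, x_{n+1}$, and likewise $\P_{n,n+1} = \sum_{\nu_n=\nu_{n+1}} \P_{\nu_n}(x_n, x_{n+1}) e(\nu)$ is a polynomial expression in $x_n, x_{n+1}$ (weighted by idempotents). Both lie in the polynomial subalgebra $\bigoplus_{\nu \in I^{n+1}} \k[x_1,\dots,x_{n+1}]e(\nu)$, which is contained in $R(n,1)$. Meanwhile the prefactor $\tau_a\cdots\tau_{n-1}$ involves only braid generators of index at most $n-1$, hence sits in $R(n) \subset R(n,1)$, and $y \in R(n) \subset R(n,1)$ by hypothesis. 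Since $R(n,1)$ is a subalgebra of $R(n+1)$, the product of these three factors belongs to $R(n,1)$.

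The only place requiring a moment of care is the observation that after moving $f(x_n)$ past the single generator $\tau_n$ (which is the \emph{only} generator outside $R(n,1)$ appearing in the expression), the correction term \eqref{Eq: partial} no longer involves $\tau_n$ at all; everything else in the expression $\tau_a\cdots\tau_{n-1}f(x_n)\tau_n y$ already lies in $R(n,1)$, so only this single crossing needs to be resolved. Hence there is no genuine obstacle — the lemma reduces to a direct application of \eqref{Eq: partial} followed by inspecting where each factor lives.
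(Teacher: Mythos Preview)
Your proof is correct and follows essentially the same approach as the paper: apply \eqref{Eq: partial} at position $n$ to commute $f(x_n)$ past $\tau_n$, and then observe that the correction term $\tau_a\cdots\tau_{n-1}(\partial_n f)\P_{n,n+1}\,y$ lies in $R(n,1)$ because each factor does and $R(n,1)$ is a subalgebra. The paper's argument is identical in substance, only more compressed.
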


\begin{proof}
By $\eqref{Eq: partial}$, we have
\begin{align} \label{Eq: Mod R(n,1)}
\tau_a \cdots \tau_{n-1}f(x_n)\tau_n y =\tau_a \cdots \tau_{n-1}(\tau_n f(x_{n+1}) +(\partial_n f)
\P_{n,n+1})y.
\end{align}
Since $(\partial_n f) \P_{n,n+1} \in \sum_{\nu \in I^{n+1}}
\k[x_n,x_{n+1}]e(\nu) \subset R(n,1)$, the second term in the
right-hand side of $\eqref{Eq: Mod R(n,1)}$ is equal to
$$ \tau_a \cdots \tau_{n-1}(\partial_n f) \P_{n,n+1} y \equiv 0 \quad {\rm mod} \ R(n,1).$$
Hence our assertion holds.
\end{proof}

\begin{proposition} \label{Prop: twist by tau n}
The homomorphism $ R(n) \otimes_{R(n-1)} R(n) \longrightarrow R(n+1)$
given by
$$ x \otimes y \longmapsto x \tau_n y \quad (x,y \in R(n))$$
induces an isomorphism of $(R(n),R(n))$-bimodules
\begin{align}\label{Eq: E_iF_i}
 R(n) \otimes_{R(n-1)}R(n) \oplus R(n,1) \overset{\sim}{\to} R(n+1).
\end{align}
\end{proposition}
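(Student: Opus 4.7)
The plan is to show that the map $\Psi\seteq\mu\oplus\iota\cl R(n)\otimes_{R(n-1)}R(n)\oplus R(n,1)\to R(n+1)$ sending $(x\otimes y,z)\mapsto x\tau_ny+z$ is a left $R(n)$-module isomorphism; right $R(n)$-linearity follows tautologically from the formula, so this will suffice for the $(R(n),R(n))$-bimodule statement. The strategy is to pin down matching left $R(n)$-bases on both sides using two applications of the preceding proposition, and verify that $\Psi$ connects them by a triangular change of basis.

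First I would check well-definedness of $\mu$. The subalgebra $R(n-1)\subset R(n)\subset R(n+1)$ is generated by $x_k$ ($k<n$), $\tau_k$ ($k<n-1$), and idempotent sums of the form $\sum_{i,j\in I}e(\nu,i,j)$ for $\nu\in I^{n-1}$; by the KLR relations each of these commutes with $\tau_n$, because $\tau_n$ only involves positions $n,n+1$ and the relation $\tau_ne(\mu)=e(s_n\mu)\tau_n$ becomes trivial after summing out the last index. Hence $\mu$ factors through $\otimes_{R(n-1)}$, and $\Psi$ is a bimodule homomorphism.

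For the bases, applying the previous proposition with $n-1$ in place of $n$ presents $R(n)$ as a free left $R(n-1,1)$-module with basis $\{\tau_{n-1}\cdots\tau_a\}_{1\le a\le n}$ (taking the empty product as $1$); since $R(n-1,1)=R(n-1)\otimes\k[x_n]$ is free over $R(n-1)$ on $\{x_n^j\}_{j\ge 0}$, we obtain a free left $R(n-1)$-basis $\mathcal{B}\seteq\{x_n^j\tau_{n-1}\cdots\tau_a\}$ of $R(n)$. Consequently the source of $\Psi$ is a free left $R(n)$-module on $\{1\otimes b:b\in\mathcal{B}\}\cup\{x_{n+1}^j\}_{j\ge 0}$. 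On the target, the previous proposition directly yields the free left $R(n)$-basis $\{x_{n+1}^j\}\cup\{x_{n+1}^j\tau_n\tau_{n-1}\cdots\tau_a\}_{j\ge 0,\,1\le a\le n}$ of $R(n+1)$.

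Finally I would match them. The image of the source basis under $\Psi$ is $\{\tau_nx_n^j\tau_{n-1}\cdots\tau_a\}\cup\{x_{n+1}^j\}$. By relation \eqref{Eq: partial} applied with $a=n$ and $f=x_n^j$, one gets $\tau_nx_n^j-x_{n+1}^j\tau_n=(\partial_nx_n^j)\P_{n,n+1}\in R(n,1)$; multiplying on the right by $\tau_{n-1}\cdots\tau_a\in R(n)\subset R(n,1)$ keeps the error in $R(n,1)$. Therefore $\tau_nx_n^j\tau_{n-1}\cdots\tau_a\equiv x_{n+1}^j\tau_n\tau_{n-1}\cdots\tau_a\pmod{R(n,1)}$, and the change from the target basis to the image of $\Psi$ is block upper triangular with identity diagonals, hence invertible. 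This shows $\Psi$ is a left $R(n)$-module isomorphism and completes the proof. The principal obstacle is simply to execute the triangularity argument carefully, tracking that the error term actually expands in the basis $\{x_{n+1}^k\}$ of $R(n,1)$; beyond that the argument is bookkeeping with the decomposition from the previous proposition.
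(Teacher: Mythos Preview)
Your proposal is correct and follows essentially the same approach as the paper. The paper's proof invokes Lemma~\ref{Lem: Mod R(n,1)} and then defers to \cite[Proposition~3.3]{KK11}; your congruence $\tau_n x_n^j\tau_{n-1}\cdots\tau_a\equiv x_{n+1}^j\tau_n\tau_{n-1}\cdots\tau_a\pmod{R(n,1)}$ is exactly the content of that lemma (up to switching which tensor factor carries the explicit basis), and the block-triangular comparison of left $R(n)$-bases you run is precisely the argument one extracts from the cited reference.
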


\begin{proof}
Using Lemma \ref{Lem: Mod R(n,1)}, we can apply a similar argument
given in \cite[Proposition 3.3]{KK11}
\end{proof}

\begin{corollary}\label{Cor: commutiation E_iF_j} There exists a natural isomorphism
\begin{align*}
& e(n,i)R(n+1)e(n,j) \\
&= \begin{cases}
  q_i^{-a_{ij}} R(n) e(n-1,j) \otimes_{R(n-1)} e(n-1,i)R(n) & \text{ if } i \neq j,\\
  q_i^{-a_{ii}} R(n) e(n-1,i) \otimes_{R(n-1)} e(n-1,i)R(n) \oplus e(n,i)R(n,i)e(n,i) & \text{ if } i = j, \end{cases}
\end{align*}
where $q$ is the degree shift functor and $q_i=q^{d_i}$.
\end{corollary}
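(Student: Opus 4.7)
The plan is to derive the isomorphism by first applying the preceding proposition to decompose $R(n+1)$ as $(R(n),R(n))$-bimodules, and then cutting with the idempotents $e(n,i)$ on the left and $e(n,j)$ on the right.

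For the $R(n,1)$ summand: writing $R(n,1) = R(n)\otimes_{\k}R(1)$ and $e(n,k) = 1\otimes e(k)$, I obtain
$$e(n,i)\,R(n,1)\,e(n,j) = R(n)\otimes (e(i)R(1)e(j)).$$
Since $R(1) = \bigoplus_{k\in I}\k[x_1]e(k)$, this vanishes when $i\neq j$ and equals the summand $e(n,i)R(n,1)e(n,i)$ when $i = j$, which is precisely the extra term in the statement.

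For the $R(n)\otimes_{R(n-1)}R(n)$ summand, which is mapped isomorphically onto $R(n)\tau_n R(n)$ by $x\otimes y\mapsto x\tau_n y$, the key identities are
$$e(n,i)\,\tau_n = \tau_n\,e(n-1,i),\qquad \tau_n\,e(n,j) = e(n-1,j)\,\tau_n,$$
with $e(n-1,k)$ regarded as an element of $R(n)\subset R(n+1)$ via the embedding on the first $n$ strands. Both follow directly from $\tau_n e(\nu) = e(s_n\nu)\tau_n$ together with the definitions of the relevant idempotents. Combining these with the fact that $e(n,i)$ and $e(n,j)$, lying in $1\otimes R(1)$, commute with every element of $R(n)\otimes 1$, a short calculation gives
$$e(n,i)\,(x\tau_n y)\,e(n,j) = (x\,e(n-1,j))\,\tau_n\,(e(n-1,i)\,y)$$
for all $x,y\in R(n)$.

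Because the $e(n-1,k)$ involve only the $n$-th strand, they commute with the embedded $R(n-1)$ and so induce right (resp.\ left) $R(n-1)$-module decompositions $R(n) = \bigoplus_k R(n)e(n-1,k) = \bigoplus_l e(n-1,l)R(n)$. Tensoring yields
$$R(n)\otimes_{R(n-1)}R(n) = \bigoplus_{k,l\in I}R(n)e(n-1,k)\otimes_{R(n-1)}e(n-1,l)R(n),$$
and the calculation above identifies $e(n,i)\,R(n)\tau_n R(n)\,e(n,j)$ with the single summand corresponding to $(k,l) = (j,i)$, namely $R(n)e(n-1,j)\otimes_{R(n-1)}e(n-1,i)R(n)$.

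Finally, for the grading: the map $x\otimes y\mapsto x\tau_n y$ raises degree by $\deg\tau_n$, and after the cut only the component $\tau_n e(\nu)$ with $\nu_n = i$ and $\nu_{n+1} = j$ survives, so $\deg\tau_n = -(\alpha_i|\alpha_j) = -d_i a_{ij}$, producing exactly the shift $q_i^{-a_{ij}}$ of the statement. The only real difficulty is careful bookkeeping of which idempotent lives in which algebra and of the embeddings of $R(n-1)$ and $R(n)$ into $R(n+1)$; once these conventions are fixed, the proof reduces to the preceding proposition and the defining relations of $R$.
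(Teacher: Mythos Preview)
Your proof is correct and follows essentially the same approach as the paper: apply the idempotent cut $e(n,i)\,\bullet\,e(n,j)$ to the decomposition of Proposition~\ref{Prop: twist by tau n}, use $e(n,i)\tau_n e(n,j)=e(n-1,j)\tau_n e(n-1,i)$ to identify the tensor summand, and read off the degree shift from $\deg(\tau_n e(\nu))=-(\alpha_i|\alpha_j)$. The paper's proof is simply a terser version of exactly this computation.
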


\begin{proof} By applying the exact functor $e(n,i) \ \bullet \ e(n,j)$ on $\eqref{Eq: E_iF_i}$, we obtain
\begin{align*}
& \ e(n,i)R(n+1)e(n,j) \cong  e(n,i)(R(n) \otimes_{R(n-1)}R(n) \oplus R(n,1))e(n,j) \\
& \cong e(n,i)R(n)\otimes_{R(n-1)}R(n)e(n,j) \oplus \delta_{ij} e(n,i) R(n,1) e(n,j) \\
& \cong R(n)e(n,i)\otimes_{R(n-1)}e(n,j)R(n) \oplus \delta_{ij} e(n,i) R(n,1) e(n,j) \\
& \cong R(n)e(n-1,j)\otimes_{R(n-1)}e(n-1,i)R(n) \oplus \delta_{ij} e(n,i) R(n,1) e(n,j).
\end{align*}
The grading-shift $q_i^{-a_{ij}}=q^{-(\alpha_i|\alpha_j)}$ arises
from $e(n,i)\tau_n e(n,j)$.
\end{proof}

Note that the kernels of $E_iF_j$ and $F_jE_i$ are given by
\begin{equation}\label{Eq: Kernels E_iF_j and F_jE_i}
\begin{aligned}
& e(n,i)R(n+1)e(n,j)e(\beta)=e(\beta,i)R(\beta+\alpha_j)e(\beta,j), \\
& R(n)e(n-1,j)\otimes_{R(n-1)}e(n-1,i)R(n)e(\beta)=R(\beta-\alpha_i+\alpha_j) e(\beta-\alpha_i,j),
\end{aligned}
\end{equation}
respectively. The following theorem is an immediate consequence of
Corollary \ref{Cor: commutiation E_iF_j}.

\begin{theorem} \label{Thm: Comm E_i F_j}
There exist natural isomorphisms
\begin{align*}
E_iF_j \overset{\sim}{\to}
\begin{cases}
q_i^{-a_{ij}}F_jE_i & \text{ if } i \neq j, \\
q_i^{-a_{ii}}F_iE_i \oplus {\rm Id}\otimes \k[t_i] & \text{ if } i = j,
\end{cases}
\end{align*}
where $t_i$ is an indeterminate of degree $2 d_i$ and
${\rm Id}\otimes \k[t_i]\cl\Mod(R(\beta)) \to \Mod(R(\beta))$ is the
functor $M \mapsto M \otimes \k[t_i]$.
\end{theorem}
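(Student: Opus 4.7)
The plan is to deduce this theorem directly from Corollary \ref{Cor: commutiation E_iF_j} by interpreting both sides of the claimed isomorphism as tensor products with explicit bimodules and matching summand by summand. First I would observe that $F_j$ is the functor of tensoring with the $(R(\beta+\alpha_j),R(\beta))$-bimodule $R(\beta+\alpha_j)e(\beta,j)$, while $E_i$ is the functor of multiplying by the idempotent $e(\bullet,i)$. Consequently, on an $R(\beta+\alpha_i)$-module $N$ (with both compositions interpreted as functors between the appropriate weight spaces as in \eqref{Eq: Kernels E_iF_j and F_jE_i}), we have
\begin{align*}
E_iF_j(N) &\;\cong\; e(n,i)R(n+1)e(n,j)e(\beta)\otimes_{R(\beta)}N,\\
F_jE_i(N) &\;\cong\; R(n)e(n-1,j)\otimes_{R(n-1)}e(n-1,i)R(n)e(\beta)\otimes_{R(\beta)}N,
\end{align*}
so that the natural transformation comparing the two compositions is determined at the level of bimodules.

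Next, I would apply Corollary \ref{Cor: commutiation E_iF_j}, which already supplies the bimodule isomorphism
\[
e(n,i)R(n+1)e(n,j)\;\cong\;
\begin{cases}
q_i^{-a_{ij}}\,R(n)e(n-1,j)\otimes_{R(n-1)}e(n-1,i)R(n) & (i\neq j),\\[2pt]
q_i^{-a_{ii}}\,R(n)e(n-1,i)\otimes_{R(n-1)}e(n-1,i)R(n)\;\oplus\;e(n,i)R(n,1)e(n,i) & (i=j).
\end{cases}
\]
After tensoring with $N$ over $R(\beta)$, the first summand on the right is exactly $q_i^{-a_{ij}}F_jE_i(N)$ in the $i\neq j$ case and $q_i^{-a_{ii}}F_iE_i(N)$ in the $i=j$ case, using the identification in \eqref{Eq: Kernels E_iF_j and F_jE_i}. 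The grading shifts arise from the degree $-(\alpha_i\mid\alpha_j)=-d_ia_{ij}$ of the element $e(n,i)\tau_n e(n,j)$ that implements the isomorphism in Proposition \ref{Prop: twist by tau n}, as already recorded in the proof of the corollary.

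It remains to identify the extra summand $e(n,i)R(n,1)e(n,i)$ in the case $i=j$ with the functor $\mathrm{Id}\otimes\k[t_i]$. Since $R(n,1)=R(n)\otimes_{\k}R(1)$ and $R(1)=\k[x_1]$ has the single relevant idempotent $e(i)$ of weight $\alpha_i$, one computes
\[
e(n,i)R(n,1)e(n,i)\;=\;R(\beta)e(\beta)\otimes_{\k}e(i)\k[x_1]e(i)\;\cong\;R(\beta)\otimes_{\k}\k[t_i],
\]
where $t_i$ corresponds to $x_1$ and carries degree $2d_i$. Tensoring with $N$ over $R(\beta)$ then yields $N\otimes\k[t_i]$, which is the functor $\mathrm{Id}\otimes\k[t_i]$ as claimed.

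The main content of the argument is really the corollary itself, which rests on Proposition \ref{Prop: twist by tau n} and the mod-$R(n,1)$ reduction in Lemma \ref{Lem: Mod R(n,1)}; the step to unpack here is therefore purely formal. The only mild subtlety I would want to be careful about is bookkeeping of the weight-space indices (so that the bimodule $e(n,i)R(n+1)e(n,j)e(\beta)$ is interpreted consistently as the appropriate summand of $R(\beta+\alpha_j)$) and tracking the degree shift $q_i^{-a_{ij}}$ through the identification; neither should cause genuine difficulty.
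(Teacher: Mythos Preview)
Your proposal is correct and follows exactly the route the paper takes: the paper states that this theorem is an immediate consequence of Corollary~\ref{Cor: commutiation E_iF_j}, and you have simply spelled out that deduction by reading off the kernel bimodules from \eqref{Eq: Kernels E_iF_j and F_jE_i} and identifying $e(n,i)R(n,1)e(n,i)\cong R(\beta)\otimes\k[t_i]$. There is nothing to add beyond the bookkeeping you already flagged.
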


\begin{proposition} \label{Pro: R(n)R1(n)}
There exists an injective homomorphism
$$ \Phi\cl R(n) \otimes_{R^1(n-1)} R^1(n) \to R(n+1) \quad \text{ given by } x \otimes y \mapsto xy.$$
Moreover, its image $R(n)R^1(n)$ has decomposition
$$R(n)R^1(n) = \bigoplus_{a=2}^{n+1}R(n,1)\tau_n \cdots \tau_2 =
                  \bigoplus_{a=0}^{n-1}\tau_a \cdots \tau_1 R(1,n). $$
\end{proposition}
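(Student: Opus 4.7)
The plan is to deduce both direct-sum decompositions of the image and the injectivity of $\Phi$ from the free decomposition of $R(n+1)$ over $R(n,1)$ already recalled, combined with the parallel decomposition of $R^1(n)$ inherited from $R(n)$ via the shift homomorphism $\xi_n$. The map $\Phi$ is well-defined because multiplication $R(n)\times R^1(n)\to R(n+1)$ is $R^1(n-1)$-balanced: the subalgebra $R^1(n-1)=\xi_{n-1}(R(n-1))$ sits inside both $R(n)$ and $R^1(n)$ (viewed in $R(n+1)$ through the natural and shifted embeddings respectively), and multiplication in $R(n+1)$ is associative.

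For the first decomposition, I would apply $\xi_n$ to the level $(n-1,1)$ decomposition $R(n)=\bigoplus_{b=1}^{n} R(n-1,1)\,\tau_{n-1}\cdots\tau_b$ and re-index to obtain
$$R^1(n)=\bigoplus_{b=2}^{n+1}R^1(n-1)\,\k[x_{n+1}]\,\tau_n\cdots\tau_b.$$
Since $R^1(n-1)\subset R(n)$ and $\k[x_{n+1}]$ commutes with $R(n)$, left multiplication by $R(n)$ absorbs $R^1(n-1)\k[x_{n+1}]$ into $R(n)\k[x_{n+1}]=R(n,1)$, so
$$R(n)R^1(n)=\sum_{b=2}^{n+1}R(n,1)\,\tau_n\cdots\tau_b.$$
This sum is direct, as it consists of $n$ of the $n+1$ summands in the decomposition $R(n+1)=\bigoplus_{a=1}^{n+1}R(n,1)\,\tau_n\cdots\tau_a$.

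The same expansion of $R^1(n)$ also yields injectivity: since $\k[x_{n+1}]$ centralises $R^1(n-1)$, each $R^1(n-1)\k[x_{n+1}]\,\tau_n\cdots\tau_b$ is free as a left $R^1(n-1)$-module on a basis of $\k[x_{n+1}]$, whence
$$R(n)\otimes_{R^1(n-1)}R^1(n)\;\cong\;\bigoplus_{b=2}^{n+1}R(n)\otimes_{\k}\k[x_{n+1}]\cdot\tau_n\cdots\tau_b\;\cong\;\bigoplus_{b=2}^{n+1}R(n,1)\,\tau_n\cdots\tau_b,$$
and $\Phi$ identifies this with the image $R(n)R^1(n)$ summand-by-summand. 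The second presentation $R(n)R^1(n)=\bigoplus_{a=0}^{n-1}\tau_a\cdots\tau_1\,R(1,n)$ follows by the mirror argument, using the companion right-module freeness $R(n+1)=\bigoplus_{a=0}^{n}\tau_a\cdots\tau_1\,R(1,n)$ together with the analogous decomposition $R(n)=\bigoplus_{a=0}^{n-1}\tau_a\cdots\tau_1\,R(1,n-1)$.

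The main obstacle is essentially bookkeeping: identifying which summands of $R(n+1)$ are hit by $R(n)R^1(n)$ and matching them termwise with those of the tensor product. Once the explicit factorization of $R^1(n)$ via $\xi_n$ and the (left and right) module freeness of $R(n+1)$ over $R(n,1)$ and $R(1,n)$ are in place, no new structural input is required beyond careful index-matching.
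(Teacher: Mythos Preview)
Your argument is correct and is precisely the approach the paper has in mind: the paper itself gives no independent proof, deferring to \cite[Proposition~3.7]{KK11}, and the proof there proceeds exactly as you outline---transport the $R(n-1,1)$-basis of $R(n)$ through $\xi_n$ to obtain an $R^1(n-1)\otimes\k[x_{n+1}]$-basis of $R^1(n)$, then match summands against the known $R(n,1)$-decomposition of $R(n+1)$.
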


\begin{proof} The proof is the same as that of \cite[Proposition 3.7]{KK11}
\end{proof}

By Proposition \ref{Pro: R(n)R1(n)}, there exists a map $\varphi_1\cl R(n+1) \to R(n) \otimes
\k[x_{n+1}]$ given by \begin{equation} \label{Eq: varphi 1}
\begin{aligned}
R(n+1) \to & {\rm Coker}(\Phi)
\cong \dfrac{\bigoplus_{a=1}^{n+1}R(n,1)\tau_n\cdots \tau_a}{\bigoplus_{a=2}^{n+1}R(n,1)\tau_n\cdots \tau_a}
\overset{\sim}{\gets} R(n,1)\tau_n\cdots \tau_1 \overset{\sim}{\gets} R(n,1) \\
& \cong R(n) \otimes \k[x_{n+1}] \cong R(n) \otimes \k[t_i].
\end{aligned}
\end{equation}

Similarly, there is an another map $\varphi_2\cl R(n+1) \to \k[x_1]\otimes R(n)$ given by
\begin{equation} \label{Eq: varphi 2}
\begin{aligned}
R(n+1) \to & {\rm Coker}(\Phi)
\cong \dfrac{\bigoplus_{a=0}^{n}\tau_a\cdots \tau_1 R(1,n)}{\bigoplus_{a=0}^{n-1}\tau_a\cdots \tau_1R(1,n)}
\overset{\sim}{\gets} \tau_n\cdots \tau_1 R(1,n) \overset{\sim}{\gets} R(1,n) \\
& \cong \k[x_{1}] \otimes R(n) \cong \k[t_i] \otimes R(n).
\end{aligned}
\end{equation}


We claim that the maps $\varphi_1$ and $\varphi_2$ coincide with each other, which is an immediate consequence
of the following lemma. When $a_{ii}=2$ for all $i \in I$,
the proof easily follows from $\eqref{Eq: Relation2}$ and
$\eqref{Eq: Braid Relation}$. However, when $a_{ii} \neq 2$ for some $i \in I$, the verification becomes
more complicated.

\begin{lemma} \label{Lem: conincidence} For all $1 \le k \le n$ and $1 \le \ell \le n-1$,
\begin{itemize}
\item[(a)] $x_k \tau_n \cdots \tau_1 \equiv \tau_n \cdots \tau_1 x_{k+1}$,
\item[(b)] $\tau_\ell \tau_n \cdots \tau_1 \equiv \tau_n \cdots \tau_1 \tau_{\ell+1}$,
\item[(c)] $x_{n+1} \tau_n \cdots \tau_1 \equiv \tau_n \cdots \tau_1 x_1$ ${\rm mod} \ R(n)R^1(n).$
\end{itemize}
\end{lemma}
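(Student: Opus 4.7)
The plan is to prove each of (a), (b), (c) by directly computing the difference between the two sides using the defining relations of $R(n+1)$ together with the derivation formula \eqref{Eq: partial}, and then exhibiting the resulting sum of correction terms as an element of $R(n)R^1(n)$. I would use both descriptions
\[ R(n)R^1(n) = \bigoplus_{a=2}^{n+1}R(n,1)\tau_n \cdots \tau_a = \bigoplus_{a=0}^{n-1}\tau_a \cdots \tau_1 R(1,n) \]
from Proposition~\ref{Pro: R(n)R1(n)} in parallel, exploiting the key observation that each $\P_{j,j+1}$ (for $1 \le j \le n$) lies in both $R(n,1)$ and $R(1,n)$, since it is a polynomial in $x_j, x_{j+1}$ times the idempotent $e_{j,j+1}$.

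For (a), since $x_k$ commutes with $\tau_j$ whenever $j \notin \{k-1, k\}$ and $x_k\tau_k = \tau_k x_{k+1} - \P_{k,k+1}$ by \eqref{Eq: partial}, a direct calculation yields
\[ x_k\tau_n\cdots\tau_1 - \tau_n\cdots\tau_1 x_{k+1} = -\tau_n\cdots\tau_{k+1}\,\P_{k,k+1}\,\tau_{k-1}\cdots\tau_1. \]
For (c), iterating $x_{j+1}\tau_j = \tau_j x_j + \P_{j,j+1}$ from $j = n$ down to $j = 1$ gives
\[ x_{n+1}\tau_n\cdots\tau_1 - \tau_n\cdots\tau_1 x_1 = \sum_{j=1}^{n}\tau_n\cdots\tau_{j+1}\,\P_{j,j+1}\,\tau_{j-1}\cdots\tau_1. \]
I would first dispatch the extremal cases $j = 1$ and $j = n$: the term $\tau_n\cdots\tau_2\P_{1,2}$ lies in $R^1(n)\cdot R(1,n) \subset R(1,n)$ and thus in the $a = 0$ summand, while $\P_{n,n+1}\tau_{n-1}\cdots\tau_1$ lies in $R(n,1)\cdot R(n) \subset R(n,1)$ and thus in the $a = n+1$ summand. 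For intermediate $j$, I would commute $\P_{j,j+1}$ past the adjacent $\tau$'s using \eqref{Eq: partial}; each commutation strips off a $\tau$ from a flanking product while producing further polynomial factors that remain in $R(n,1)\cap R(1,n)$, and the resulting pieces fit into some $R(n,1)\tau_n\cdots\tau_a$ with $a \ge 2$.

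The proof of (b) follows the same template but uses the braid relation \eqref{Eq: Braid Relation}. Commuting $\tau_\ell$ past $\tau_n,\ldots,\tau_{\ell+2}$ (whose indices differ from $\ell$ by at least $2$), applying \eqref{Eq: Braid Relation} to the substring $\tau_\ell\tau_{\ell+1}\tau_\ell$, and then moving $\tau_{\ell+1}$ past $\tau_{\ell-1},\ldots,\tau_1$, I obtain
\[ \tau_\ell\tau_n\cdots\tau_1 - \tau_n\cdots\tau_1\,\tau_{\ell+1} = -\tau_n\cdots\tau_{\ell+2}\bigl(\overline{\Q}_\ell\P_{\ell,\ell+2} + \overline{\P}'_\ell\tau_\ell + \overline{\P}''_\ell\tau_{\ell+1}\bigr)\tau_{\ell-1}\cdots\tau_1. \]
The hard part will be showing that each of these three types of correction lies in $R(n)R^1(n)$. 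The modified hypotheses on $\P_i(u,v)$ recorded in Remark~\ref{Rem: condition of P} are precisely what ensures that $\overline{\Q}_\ell$, $\overline{\P}'_\ell$, $\overline{\P}''_\ell$, and $\P_{\ell,\ell+2}$ all belong to $R(n,1)\cap R(1,n)$; granted this, the same commutation strategy as in (a) and (c) decomposes each correction into pieces of the form $R(n,1)\tau_n\cdots\tau_a$ with $a \ge 2$, completing the argument.
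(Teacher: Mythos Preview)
Your computation of the three differences is correct and matches the paper exactly. The approach diverges when it comes to showing that the correction terms lie in $R(n)R^1(n)$.

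The paper does not treat each correction term separately by commuting the polynomial factor through the flanking $\tau$-products. Instead it isolates a single auxiliary claim:
\[
\tau_n\cdots\tau_k\,f\,\tau_\ell\cdots\tau_1 \equiv 0 \pmod{R(n)R^1(n)}
\quad\text{whenever } f\in\k[x_1,\dots,x_{n+1}] \text{ and } \ell+2\le k\le n+1,
\]
and proves it by downward induction on $k$: write $\tau_k f = s_k(f)\tau_k + f'$, move $\tau_k$ past $\tau_\ell\cdots\tau_1$ (they commute since $k\ge\ell+2$), and observe that $\tau_k\in R^1(n)$ because $k\ge 2$. Every correction term in (a), (b), (c) is visibly of this shape, so the lemma follows in one line each. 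Your strategy of commuting $\P_{j,j+1}$ (or $\overline{\Q}_\ell$, $\overline{\P}'_\ell$, $\overline{\P}''_\ell$) outward through the $\tau$'s is essentially the mirror-image induction---shortening the right block instead of the left---and would also work, but it generates more bookkeeping (each commutation produces a new polynomial whose variables shift) and you have not actually carried it out; the phrase ``the resulting pieces fit into some $R(n,1)\tau_n\cdots\tau_a$ with $a\ge2$'' hides exactly the induction that needs to be written.

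One correction: your invocation of Remark~\ref{Rem: condition of P} is misplaced. The elements $\overline{\Q}_\ell$, $\overline{\P}'_\ell$, $\overline{\P}''_\ell$, $\P_{\ell,\ell+2}$ lie in $\bigoplus_\nu\k[x_1,\dots,x_{n+1}]e(\nu)$, hence in $R(n,1)\cap R(1,n)$, automatically---this has nothing to do with the leading-term conditions on $\P_i$. Those conditions are used elsewhere (Lemmas~\ref{Lem: nilpotency} and~\ref{Lem: Rel between F and S}), not here.
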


\begin{proof}
We will verify that
\begin{equation} \label{Eq: claim in Mod R(n)R1(n)}
\begin{aligned}
& \parbox{70ex}{for $f \in \k[x_{1},\cdots,x_{n+1}]$,\\
$\tau_n\tau_{n-1} \cdots \tau_k \  f \ \tau_\ell \cdots \tau_1 \equiv 0 \mod R(n)R^1(n)$
\text{ if } $\ell+2\le k\le n+1$.}
\end{aligned}
\end{equation}
We shall prove this by using downward induction on $k$. If $k=n+1$, it is trivial.

Assume that $k\le n$ and our assertion is true for $k+1$. Then we have
\begin{equation} \label{Eq: Ind k+1}
\begin{aligned}
\tau_n \cdots \tau_k f \tau_\ell \cdots \tau_1
&= \tau_n \cdots \tau_{k+1}(s_k(f)\tau_k+f')\tau_\ell \cdots \tau_1 \\
&= \tau_n \cdots \tau_{k+1}s_k(f)\tau_\ell \cdots \tau_1 \tau_k
   + \tau_n \cdots \tau_{k+1}f'\tau_\ell \cdots \tau_1
\end{aligned}
\end{equation}
for some $f' \in \k[x_{1},\cdots,x_{n+1}]$. Since $\tau_k \in R^1(n)$, all the terms in the right-hand
side of $\eqref{Eq: Ind k+1}$ are 0 ${\rm mod} \ R(n)R^1(n)$ by the induction hypothesis.
Hence our assertion holds.

\medskip
\noi
(a) For $1 \le k \le n$, we have
\begin{align*}
x_k \tau_n \cdots \tau_1 & =  \tau_n \cdots \tau_{k+1} x_k \tau_k \cdots \tau_1 \\
&= \tau_n \cdots \tau_{k+1} \tau_k x_{k+1} \tau_{k-1} \cdots \tau_1
   -\tau_n \cdots \tau_{k+1} \P_{k,k+1}\tau_{k-1} \cdots \tau_1 .
\end{align*}
Then the second term is $0$
$ {\rm mod} \ R(n)R^1(n)$ by $\eqref{Eq: claim in Mod R(n)R1(n)}$, and the first term
is equal to
$$(\tau_n \cdots \tau_{k+1} \tau_k )(\tau_{k-1}\cdots \tau_1)x_{k+1},$$
which implies our first assertion.

\medskip
\noi
(b) For $1 \le \ell \le n-1$, we have
\begin{align*}
&\tau_\ell \tau_n \cdots \tau_1 =  \tau_n \cdots \tau_{\ell+2}\tau_{\ell}\tau_{\ell+1}\tau_{\ell} \cdots \tau_1 \\
& = \tau_n \cdots \tau_{\ell+2}
    (\tau_{\ell+1}\tau_{\ell}\tau_{\ell+1}-\overline{\Q}_{\ell}\P_{\ell,\ell+2}
    -\overline{\P}'_{\ell}\tau_{\ell}-\tau_{\ell+1}\overline{\P}''_{\ell})
    \tau_{\ell-1} \cdots \tau_1 \\
& = \tau_n \cdots \tau_1 \tau_{\ell+1}
-\tau_n \cdots \tau_{\ell+2}(\overline{\Q}_{\ell}\P_{\ell,\ell+2})
    \tau_{\ell-1} \cdots \tau_1 \\
& \ \  -\tau_n \cdots \tau_{\ell+2}
(\overline{\P}'_{\ell})\tau_{\ell} \cdots \tau_1  -\tau_n \cdots
\tau_{\ell+1} (\overline{\P}''_{\ell})\tau_{\ell-1} \cdots \tau_1.
\end{align*}
By $\eqref{Eq: claim in Mod R(n)R1(n)}$, the terms except the first one are 0 $ {\rm mod} \ R(n)R^1(n)$.

(c) If $k=n+1$, we have
\begin{align*}
x_{n+1}\tau_n \cdots \tau_1 & = (\tau_n x_n + \P_{n,n+1}) \tau_{n-1} \cdots \tau_1 \\
& = \tau_n x_n \tau_{n-1} \cdots \tau_1+ \P_{n,n+1}\tau_{n-1} \cdots \tau_1 \\
& \equiv \tau_n x_n \tau_{n-1} \cdots \tau_1 \\
& \quad \quad  \vdots  \\
& \equiv \tau_n \cdots \tau_1 x_1 \quad {\rm mod} \ R(n)R^1(n).
\end{align*}
\end{proof}

As an immediate corollary, we obtain

\begin{corollary} \label{Lem: varphi 2}
There is an exact sequence of $(R(n),R(n))$-bimodules
\begin{align} \label{Eq: E_i Bar F_i}
 0 \to R(n) \otimes_{R(n-1)} R(n) \to R(n+1) \overset{\varphi}{\to} R(n) \otimes \k[t_i] \to 0,
\end{align}
where the map $\varphi$ is given by $\eqref{Eq: varphi 1}$ or
$\eqref{Eq: varphi 2}$. Here, the right $R(n)$-module structure on
$R(n+1)$ is given by the embedding $\xi_n\cl R(n) \overset{\sim}{\to} R^1(n)
\hookrightarrow R(n+1)$. Moreover, both the left multiplication by
$x_{n+1}$ and the right multiplication by $x_1$ on $R(n+1)$ are
compatible with the multiplication by $t_i$ on $R(n) \otimes
\k[t_i]$.
\end{corollary}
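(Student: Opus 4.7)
The plan is to read the corollary as a direct repackaging of Propositions \ref{Prop: twist by tau n}--\ref{Pro: R(n)R1(n)} together with Lemma \ref{Lem: conincidence}. The first task is to identify the left term of the sequence correctly: using the isomorphism $\xi_n\cl R(n)\overset{\sim}{\to} R^1(n)$ (and $\xi_{n-1}$ on the common subalgebra), the abstract bimodule $R(n)\otimes_{R(n-1)} R(n)$ is identified with $R(n)\otimes_{R^1(n-1)} R^1(n)$, which by Proposition \ref{Pro: R(n)R1(n)} embeds injectively in $R(n+1)$ with image $R(n)R^1(n)$ via the multiplication map $\Phi$. This is an injection of $(R(n),R(n))$-bimodules precisely when the right $R(n)$-action on $R(n+1)$ is taken via $\xi_n$, matching the convention spelled out in the statement.

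Next, I would check exactness in the middle and on the right directly from the construction of $\varphi=\varphi_1$. By its very definition in \eqref{Eq: varphi 1}, the map $\varphi_1$ is the composition
\[
R(n+1)\twoheadrightarrow R(n+1)/R(n)R^1(n)=\mathrm{Coker}(\Phi)
\overset{\sim}{\gets} R(n,1)\tau_n\cdots\tau_1
\overset{\sim}{\to} R(n,1)\cong R(n)\otimes\k[t_i],
\]
where the middle identification comes from combining the decomposition of $R(n+1)$ in Proposition \ref{Prop: twist by tau n} with the decomposition of $R(n)R^1(n)$ in Proposition \ref{Pro: R(n)R1(n)}, and the last sends $x_{n+1}\leftrightarrow t_i$. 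Hence $\ker\varphi_1=R(n)R^1(n)$ (which is the image of the first map) and $\varphi_1$ is surjective. That the same map may be described by \eqref{Eq: varphi 2} is exactly what Lemma \ref{Lem: conincidence}(a),(b) provides: those two relations move $x_k$'s and $\tau_\ell$'s across $\tau_n\cdots\tau_1$ modulo $R(n)R^1(n)$, which is the one thing needed to match the ``right'' representatives $\tau_n\cdots\tau_1 R(1,n)$ with the ``left'' representatives $R(n,1)\tau_n\cdots\tau_1$.

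Finally, for the compatibility claim, the key observation is that $x_{n+1}\in R^1(n)$ commutes with $R(n)\subset R(n+1)$ and that $x_1\in R(n)$ commutes with $R^1(n)$ (both facts follow directly from $x_kx_\ell=x_\ell x_k$ and from $\tau_\ell x_k=x_k\tau_\ell$ whenever $k\neq\ell,\ell+1$). Consequently, left multiplication by $x_{n+1}$ and right multiplication by $x_1$ each preserve $R(n)R^1(n)$ and descend to well-defined operators on the quotient. On a representative $z\equiv r\tau_n\cdots\tau_1\ (\mathrm{mod}\ R(n)R^1(n))$ with $r\in R(n,1)$, one computes $x_{n+1}z\equiv rx_{n+1}\tau_n\cdots\tau_1$, whose image under $R(n,1)\cong R(n)\otimes\k[t_i]$ is $t_i\cdot\varphi(z)$; and $zx_1\equiv r\tau_n\cdots\tau_1 x_1\equiv rx_{n+1}\tau_n\cdots\tau_1$ by Lemma \ref{Lem: conincidence}(c), giving $\varphi(zx_1)=t_i\cdot\varphi(z)$ as well (equivalently, this can be read off directly from \eqref{Eq: varphi 2} with $x_1\leftrightarrow t_i$). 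The only non-routine input is Lemma \ref{Lem: conincidence}(c), which is precisely the bridge relating the $\varphi_1$ and $\varphi_2$ descriptions on elements involving the ``boundary'' variables $x_{n+1}$ and $x_1$; with that lemma in hand, no further obstacle remains, which is why the result is labelled a corollary.
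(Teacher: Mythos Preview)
Your proof is correct and follows the same approach as the paper, which treats the corollary as immediate from Lemma \ref{Lem: conincidence} together with Propositions \ref{Prop: twist by tau n} and \ref{Pro: R(n)R1(n)}. One small imprecision: to fully identify $\varphi_1$ with $\varphi_2$ you also need part (c) of Lemma \ref{Lem: conincidence}, since $R(n,1)$ contains $x_{n+1}$ and $R(1,n)$ contains $x_1$; but you correctly invoke (c) for the $x_1$-compatibility, so the argument as a whole is sound.
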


By applying the exact functor $e(\beta+\alpha_j-\alpha_i,i) \
\bullet \ e(j,\beta)$ on $\eqref{Eq: E_i Bar F_i}$, Corollary
\ref{Lem: varphi 2} yields the following theorem.

\begin{theorem} \label{Thm: Comm E_i bar F_j } \
\bnum
\item There is a natural isomorphism
 $$ \overline{F}_j E_i \overset{\sim}{\to} E_i \overline{F}_j \quad \text{ for } i \neq j.$$
\item There is an exact sequence in $\Mod(R(\beta))$:
 $$ 0\to \overline{F}_i E_i M \to E_i \overline{F}_i M \to q^{-(\alpha_i | \beta )}M \otimes \k[t_i] \to 0,$$
 which is functorial in $M$.
\end{enumerate}
\end{theorem}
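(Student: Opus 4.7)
My plan is to apply an exact idempotent truncation to the short exact sequence of $(R(n),R(n))$-bimodules from Corollary~\ref{Lem: varphi 2}, then tensor with $M$ over $R(\beta)$. Specifically, with $n=|\beta|$, I would apply the exact functor $X \mapsto e(\beta+\alpha_j-\alpha_i,i)\,X\,e(j,\beta)$ to
$$0 \to R(n) \otimes_{R(n-1)} R(n) \to R(n+1) \to R(n) \otimes \k[t_i] \to 0,$$
obtaining a short exact sequence of $(R(\beta+\alpha_j-\alpha_i), R(\beta))$-bimodules. Tensoring with $M$ over $R(\beta)$ then yields the desired short exact sequence in $\Mod(R(\beta+\alpha_j-\alpha_i))$, manifestly functorial in $M$; exactness is preserved because the rightmost term is a free right $R(\beta)$-module.

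Next I would identify the three terms. The middle becomes $E_i\overline{F}_j M$ tautologically, since $e(\beta+\alpha_j-\alpha_i,i)\,R(\beta+\alpha_j)\,e(j,\beta)$ is precisely the defining bimodule for $E_i\overline{F}_j$. For the left term, under the embedding $x\otimes y \mapsto x\tau_n y$ from Proposition~\ref{Prop: twist by tau n}, the idempotent projection of $R(n)\otimes_{R(n-1)}R(n)$ identifies with the bimodule
$$R(\beta+\alpha_j-\alpha_i)\,e(j,\beta-\alpha_i)\otimes_{R(\beta-\alpha_i)}e(\beta-\alpha_i,i)\,R(\beta),$$
which defines $\overline{F}_j E_i$; this is analogous to the computation in Corollary~\ref{Cor: commutiation E_iF_j}, adapted to the mixed ``end/start'' idempotents $e(\cdots,i)$ and $e(j,\cdots)$, and tensoring with $M$ yields $\overline{F}_j E_i M$ with no grading shift.

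The rightmost term $e(\beta+\alpha_j-\alpha_i,i)(R(n)\otimes\k[t_i])e(j,\beta)$ is where the case distinction $i=j$ versus $i\neq j$ emerges. Using the identification of $R(n)\otimes\k[t_i]$ with $R(n,1)\tau_n\cdots\tau_1 \subset R(n+1)$ via the map $\varphi_1$, together with the commutation $\tau_n\cdots\tau_1\,e(j,\beta) = e(\beta,j)\,\tau_n\cdots\tau_1$ (which follows iteratively from $\tau_a e(\nu) = e(s_a\nu)\tau_a$), the projection reduces to computing the product $e(\beta+\alpha_j-\alpha_i,i)\,e(\beta,j)$ inside $R(n,1)\cong R(n)\otimes R(1)$. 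The $R(1)$-component forces $i=j$: for $i\neq j$ the product is zero, yielding the isomorphism in part (i), and for $i=j$ it reduces to $R(\beta)\otimes\k[t_i]$ with grading shift $q^{-(\alpha_i|\beta)}$ coming from $\deg(\tau_n\cdots\tau_1\,e(\beta,i)) = -(\beta|\alpha_i)$. Tensoring with $M$ then gives $q^{-(\alpha_i|\beta)}\,M\otimes\k[t_i]$, completing part (ii). The main obstacle will be the careful tracking of bimodule structures and grading shifts throughout, particularly since the right $R(n)$-module structure on $R(n+1)$ is given via the twisted embedding $\xi_n$ rather than the natural inclusion.
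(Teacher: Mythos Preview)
Your approach matches the paper's exactly: apply the exact functor $e(\beta+\alpha_j-\alpha_i,i)\,\bullet\,e(j,\beta)$ to the short exact sequence of Corollary~\ref{Lem: varphi 2} and identify the three terms. One small correction: the embedding of the left term into $R(n+1)$ is $\Phi\colon x\otimes y\mapsto x\,\xi_n(y)$ from Proposition~\ref{Pro: R(n)R1(n)}, not the map $x\otimes y\mapsto x\tau_n y$ of Proposition~\ref{Prop: twist by tau n}; keep in mind that the tensor over $R(n-1)$ here uses the shifted embedding $\xi_{n-1}$ on the left factor, which is precisely what aligns with the kernel bimodule of $\overline{F}_jE_i$.
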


\vskip 2em

\section{ The cyclotomic quotient $R^{\Lambda}$ } \label{Sec: cyclo quotient}

In this section, we define the cyclotomic Khovanov-Lauda-Rouquier
algebra $R^{\Lambda}$ and the functors $E^{\Lambda}_i$,
$F^{\Lambda}_i$ on $\Mod(R^{\Lambda})$. We investigate the structure
of $R^{\Lambda}$ and the behavior of $E^{\Lambda}_i$,
$F^{\Lambda}_i$ on $\Proj(R^{\Lambda})$ and $\Rep(R^{\Lambda})$. In
particular, we will show that $E^{\Lambda}_i$ and $F^{\Lambda}_i$
are well-defined exact functors on $\Proj(R^{\Lambda})$ and
$\Rep(R^{\Lambda})$.

For $\Lambda \in \mathtt{P}^+$ and $i \in I$, we choose a monic polynomial of degree
$\langle h_i,\Lambda \rangle$
\begin{align} \label{Eq: cylotomic polynomial}
a^{\Lambda}_i(u)= \sum_{k=0}^{\langle h_i,\Lambda \rangle} c_{i;k}u^{\langle h_i,\Lambda \rangle-k}
\end{align}
with $c_{i;k} \in \k_{2kd_i}$ and $c_{i,0}=1$.

Given $\beta \in \mathtt{Q}^+$ with $|\beta|=n$, a dominant integral weight $\Lambda\in \mathtt{P}^{+}$ and
$k$ $(1 \le k \le n)$, set
$$ a^{\Lambda}(x_k) = \sum_{\nu \in I^\beta} a^{\Lambda}_{\nu_k}(x_k)e(\nu) \in R(\beta) .$$

\begin{definition} Let $\beta \in \mathtt{Q}^+$ and $\Lambda\in \mathtt{P}^{+}$.
\begin{enumerate}
\item The cyclotomic Khovanov-Lauda-Rouquier algebra $R^{\Lambda}(\beta)$ at $\beta$ is the quotient algebra
$$ R^{\Lambda}(\beta) = \dfrac{R(\beta)}{R(\beta)a^{\Lambda}(x_1)R(\beta)}.$$
\item The $\mathtt{Q}^+$-graded algebra $R^{\Lambda}= \bigoplus_{\alpha \in \mathtt{Q}^+} R^{\Lambda}(\alpha)$ is called
the {\it cyclotomic Khovanov-Lauda-Rouquier algebra} of weight $\Lambda$.
\end{enumerate}

\begin{lemma} \label{Lemma: zeros}
Let $\nu \in I^n$ be such that $\nu_a=\nu_{a+1}$ for some $1 \le a < n$. Then, for an $R(n)$-module
$M$ and $f \in \k[x_1,\ldots,x_n]$, $f e(\nu) M=0$ implies
\begin{align*}
 & (\partial_a f)\P_{\nu_a}(x_a,x_{a+1})\P_{\nu_a}(x_{a+1},x_a) e(\nu)M=0, \\
 & (s_a f)\P_{\nu_a}(x_a,x_{a+1})\P_{\nu_a}(x_{a+1},x_a) e(\nu)M=0.
\end{align*}
\end{lemma}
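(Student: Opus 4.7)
The plan is to pass to the quotient $\overline R = R(n)/R(n)fe(\nu)R(n)$: since $fe(\nu)$ lies in the two-sided annihilator $\mathrm{Ann}(M)\subset R(n)$, any element vanishing in $\overline R$ belongs to $\mathrm{Ann}(M)$, so it suffices to prove that $(\partial_a f)\P\P' e(\nu)$ and $(s_a f)\P\P' e(\nu)$ vanish in $\overline R$. Here $\P=\P_{\nu_a}(x_a,x_{a+1})$ and $\P'=s_a\P=\P_{\nu_a}(x_{a+1},x_a)$. Throughout I shall use three consequences of the defining relations, specialized to $\nu_a=\nu_{a+1}$: (i) $\tau_a g\cdot e(\nu)=(s_a g)\tau_a e(\nu)+(\partial_a g)\P e(\nu)$ for every polynomial $g$, from \eqref{Eq: partial}; (ii) $\tau_a^2 e(\nu)=(\partial_a\P)\tau_a e(\nu)$, from \eqref{Eq: Relation1}; (iii) $\tau_a x_a e(\nu)=(x_{a+1}\tau_a-\P)e(\nu)$, from \eqref{Eq: Relation2}. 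I will also use the Demazure identities $(x_a-x_{a+1})(\partial_a f)=s_a f-f$ and $(x_a-x_{a+1})(\partial_a\P)=\P'-\P$.

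Applying (i) with $g=f$ to $fe(\nu)=0$ gives $(s_a f)\tau_a e(\nu)+(\partial_a f)\P e(\nu)=0$ in $\overline R$. Right-multiplying this equation by $\tau_a$ and invoking (ii) produces $(s_a f)(\partial_a\P)\tau_a e(\nu)+(\partial_a f)\P\tau_a e(\nu)=0$, while left-multiplying the same equation by $\partial_a\P$ gives $(s_a f)(\partial_a\P)\tau_a e(\nu)+(\partial_a f)(\partial_a\P)\P e(\nu)=0$. Subtracting the two eliminates the $(s_a f)$-term and yields the intermediate identity
\[
(\partial_a f)\,\P\,\tau_a e(\nu)\;=\;(\partial_a f)(\partial_a\P)\,\P\, e(\nu)\qquad\text{in }\overline R.
\]
The decisive move is now to right-multiply this intermediate identity by $x_a$ and use (iii) to rewrite $\tau_a x_a$ on the left-hand side; feeding the intermediate identity back into the resulting $(\partial_a f)\P\tau_a e(\nu)$ term eliminates $\tau_a$ entirely. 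What remains is a polynomial equation which, after applying $(x_a-x_{a+1})(\partial_a\P)=\P'-\P$, collapses to $(\partial_a f)\P\P' e(\nu)=0$. The second vanishing then follows formally from $s_a f=f+(x_a-x_{a+1})(\partial_a f)$: one has $(s_a f)\P\P' e(\nu)M\subseteq \P\P'\cdot(fe(\nu)M)+(x_a-x_{a+1})\cdot\bigl((\partial_a f)\P\P' e(\nu)M\bigr)$, and both summands are zero.

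The main obstacle is to navigate the characteristic carefully. The seemingly natural step—right-multiplying the intermediate identity by $(x_a-x_{a+1})$ in one stroke—produces only $2(\partial_a f)\P\P' e(\nu)=0$, owing to the correction term in $\tau_a(x_a-x_{a+1})e(\nu)=-(x_a-x_{a+1})\tau_a e(\nu)-2\P e(\nu)$; this would be insufficient in characteristic $2$. Splitting the multiplication so that only $x_a$ (or symmetrically only $x_{a+1}$) is introduced sidesteps the difficulty: the required difference $\P'-\P$ is then produced by the crossing relation (iii) itself, and no factor of $2$ appears.
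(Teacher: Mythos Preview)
Your proof is correct and uses the same basic ingredients as the paper---the commutation rule \eqref{Eq: partial}, the quadratic relation $\tau_a^2 e(\nu)=(\partial_a\P)\tau_a e(\nu)$, and the identity $(x_a-x_{a+1})\partial_a g=s_ag-g$---so the two arguments are close in spirit. The organization differs, however. The paper proceeds in a single stroke: it expands $(x_a-x_{a+1})\tau_a f\tau_a e(\nu)$ directly and shows it equals $\P'\tau_a f e(\nu)-(\partial_a f)\P\P' e(\nu)-\P f\tau_a e(\nu)$, each term of which except the middle one is killed by the hypothesis $fe(\nu)M=0$; this isolates $(\partial_a f)\P\P' e(\nu)M=0$ with no characteristic obstruction, because the factor $(x_a-x_{a+1})$ is placed on the \emph{left} of $\tau_a f\tau_a$ rather than on the right of your intermediate identity. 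Your route---derive the intermediate identity $(\partial_a f)\P\tau_a e(\nu)=(\partial_a f)(\partial_a\P)\P e(\nu)$, then right-multiply by $x_a$ alone---is a valid alternative that reaches the same conclusion, and your diagnosis of why the naive right-multiplication by $(x_a-x_{a+1})$ loses a factor of $2$ is accurate and worth recording. The deduction of the second vanishing from the first via $s_af=f+(x_a-x_{a+1})\partial_af$ is exactly what the paper does as well.
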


\begin{proof} Note that
$\tau_{a}e(\nu)=e(\nu)\tau_{a}$ and $\tau^2_{a}e(\nu)=(\partial_a \P_{\nu_a}(x_a,x_{a+1})) \tau_a e(\nu)$.
Thus we have
\begin{align*}
& \ (x_{a}-x_{a+1})\tau_{a}f\tau_{a} e(\nu) \\
& = (x_{a}-x_{a+1}) ( (s_a f) \tau_a + (\partial_a f)\P_{\nu_a}(x_a,x_{a+1}) ) \tau_a e(\nu) \\
& = (x_{a}-x_{a+1})\Bigl((\partial_a \P_{\nu_a}(x_a,x_{a+1})) (s_a f) +
    (\partial_a f)\P_{\nu_a}(x_a,x_{a+1})\Bigr)\tau_a e(\nu) \\
& = \bl(\P_{\nu_a}(x_{a+1},x_{a})-\P_{\nu_a}(x_a,x_{a+1})\br) (s_a f)\tau_a e(\nu) +
    \P_{\nu_a}(x_a,x_{a+1})(s_a(f)-f)\tau_a e(\nu) \\
&=\P_{\nu_a}(x_{a+1},x_{a})(s_a f)\tau_a e(\nu) -\P_{\nu_a}(x_a,x_{a+1})f\tau_a e(\nu) \\
& = \P_{\nu_a}(x_{a+1},x_a)\bl(\tau_a f-(\partial_a f)\P_{\nu_a}(x_a,x_{a+1})\br) e(\nu)
- \P_{\nu_a}(x_a,x_{a+1})f\tau_ae(\nu).
\end{align*}
Thus
$$(\partial_a f) \P_{\nu_a}(x_a,x_{a+1})\P_{\nu_a}(x_{a+1},x_a)e(\nu)M=0.$$
Since $(x_a-x_{a+1})(\partial_a f)=s_a f -f$, we have
$$(s_a f) \P_{\nu_a}(x_a,x_{a+1})\P_{\nu_a}(x_{a+1},x_a) e(\nu)M=0.$$
\end{proof}

\begin{lemma} \label{Lem: nilpotency}
 Let $\beta \in \mathtt{Q}^+$ with $|\beta|=n$.
\bnum
\item There exists a monic polynomial $g(u)$ such that $g(x_a)=0$ in $R^{\Lambda}(\beta)$ for
any $a$ $(1 \le a \le n)$.
\item If $i \in \Ire$, then there exists $m\in \Z_{\ge 0}$ such that $R^{\Lambda}(\beta+k\alpha_i)=0$
for any $k \ge m$.
\end{enumerate}
\end{lemma}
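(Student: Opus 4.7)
For (i), I would argue by induction on $a$. The base case $a=1$ takes $g(u)=\prod_{i\in I}a_i^\Lambda(u)$: for each $\nu\in I^\beta$ the defining relation $a^\Lambda(x_1)=0$ yields $a_{\nu_1}^\Lambda(x_1)e(\nu)=0$, and since $a_{\nu_1}^\Lambda\mid g$ we get $g(x_1)e(\nu)=0$, hence $g(x_1)=0$ in $R^\Lambda(\beta)$.

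For the inductive step $a\to a+1$, assume $g(x_a)=0$ with $g$ monic of degree $N$. For each $\nu\in I^\beta$ I derive an identity $g(x_{a+1})\,h_\nu(x_a,x_{a+1})\,e(\nu)=0$ in $R^\Lambda$, with $h_\nu\in\k[x_a,x_{a+1}]$ whose leading coefficients as a polynomial in $x_a$ and as a polynomial in $x_{a+1}$ both lie in $\k_0^\times$. When $\nu_a\ne\nu_{a+1}$, sandwiching $g(x_a)e(\nu)=0$ between two copies of $\tau_a$ and using $\tau_ax_a^ke(\nu)=x_{a+1}^k\tau_ae(\nu)$ from \eqref{Eq: Relation2} together with $\tau_a^2e(\nu)=\Q_{\nu_a,\nu_{a+1}}(x_a,x_{a+1})e(\nu)$ gives $h_\nu=\Q_{\nu_a,\nu_{a+1}}$; when $\nu_a=\nu_{a+1}=i$, Lemma~\ref{Lemma: zeros} applied to $f=g(x_a)$ gives $h_\nu=\P_i(x_a,x_{a+1})\P_i(x_{a+1},x_a)$. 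The invertibility in $\k_0$ of $t_{i,j}^{-a_{ij},0}$, $t_{i,j}^{0,-a_{ji}}$, $w_i^{1-a_{ii}/2,0}$, and $w_i^{0,1-a_{ii}/2}$ ensures the stated leading-coefficient property in both cases. The key move is then to take the resultant $r_\nu(x_{a+1}):=\mathrm{Res}_{x_a}(g(x_a),\,g(x_{a+1})h_\nu(x_a,x_{a+1}))\in\k[x_{a+1}]$: by the Sylvester/cofactor identity, $r_\nu$ lies in the $\k[x_a,x_{a+1}]$-ideal generated by $g(x_a)$ and $g(x_{a+1})h_\nu$, so $r_\nu(x_{a+1})e(\nu)=0$ in $R^\Lambda$; the standard formula $r_\nu=g(x_{a+1})^N\prod_\rho h_\nu(\rho,x_{a+1})$ (valid since $g(x_a)$ is monic) shows $r_\nu$ has invertible $x_{a+1}$-leading coefficient, so normalizing yields a monic $\tilde g_\nu$ with $\tilde g_\nu(x_{a+1})e(\nu)=0$. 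Taking the finite product $\tilde g=\prod_{\nu\in I^\beta}\tilde g_\nu$ gives a monic polynomial with $\tilde g(x_{a+1})=0$; iterating over $a$ and multiplying together all the resulting polynomials yields the single $g$ asserted in (i).

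For (ii), $i\in\Ire$ forces $a_{ii}=2$, so $\P_i=w_i^{0,0}\in\k_0^\times$ is a nonzero constant; in particular $\tau_a^2e(\nu)=0$ whenever $\nu_a=\nu_{a+1}=i$, and a direct check shows the correction polynomials $\overline\P'_i$ and $\overline\P''_i$ in \eqref{Eq: Braid Relation} vanish identically, so the braid relation on three consecutive $i$'s reduces to $\tau_{a+1}\tau_a\tau_{a+1}=\tau_a\tau_{a+1}\tau_a$. Lemma~\ref{Lemma: zeros} simplifies accordingly to $fe(\nu)=0\Rightarrow(\partial_af)e(\nu)=0$ on such $\nu$. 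Starting from $a_i^\Lambda(x_1)e(\nu)=0$ (when $\nu_1=i$) and iterating divided-difference propagation along an initial run $\nu_1=\cdots=\nu_\ell=i$ gives $a_i^\Lambda(x_b)e(\nu)=0$ for $b=1,\dots,\ell$. On such $\nu$ the subalgebra of $e(\nu)R^\Lambda e(\nu)$ generated by $x_1,\dots,x_\ell$ and (suitably rescaled) $\tau_1,\dots,\tau_{\ell-1}$ satisfies the cyclotomic nilHecke relations with cyclotomic polynomial $a_i^\Lambda$ of degree $N_i=\langle h_i,\Lambda\rangle$, and the classical vanishing of that algebra for rank $\ell>N_i$ forces $e(\nu)=0$. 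The extension to arbitrary $\nu\in I^{\beta+k\alpha_i}$ is achieved by commuting non-$i$ entries past $i$-blocks via $\tau_a$ and absorbing the resulting $\Q_{i,j}$-corrections through a resultant argument as in (i); for $k$ larger than an explicit threshold $m$ depending on $N_i$, $\Supp_i(\beta)$, and $|\beta|$, every $e(\nu)$ vanishes, yielding $R^\Lambda(\beta+k\alpha_i)=0$. The main obstacle I anticipate is precisely this last reduction step from the leading-block case to arbitrary $\nu$.
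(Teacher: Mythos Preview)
Your argument for (i) is essentially the paper's: the same two cases, the same use of Lemma~\ref{Lemma: zeros} for $\nu_a=\nu_{a+1}$, and the resultant is just an explicit realization of the paper's ``choose a monic polynomial in the ideal generated by $g(x_a)$ and $g(x_{a+1})h_\nu(x_a,x_{a+1})$''. No objection there.

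For (ii), however, the gap you flag is real, and your proposed fix does not work. Suppose you know $e(\nu')=0$ for some $\nu'$ with a long initial $i$-block, and $\nu=s_a\nu'$ with $\nu_a\ne\nu_{a+1}$. All you can extract is $\Q_{\nu_a,\nu_{a+1}}(x_a,x_{a+1})e(\nu)=\tau_a e(\nu')\tau_a=0$; combining this with the monic relations $g(x_a)e(\nu)=g(x_{a+1})e(\nu)=0$ from (i) via resultants only produces further polynomial relations $r(x_{a+1})e(\nu)=0$, not $e(\nu)=0$. Over a general base ring $\k$ there is no reason for $1$ to lie in the $\k[x_a,x_{a+1}]$-ideal generated by $g(x_a)$, $g(x_{a+1})$, and $\Q_{\nu_a,\nu_{a+1}}$, so this route stalls.

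The paper bypasses this commuting problem entirely. Instead of pushing $i$'s to the \emph{front} and invoking the cyclotomic nilHecke algebra, it runs an induction on the total length $n$ and works with a \emph{tail} of $i$'s. Given $k=k_{n-1}$ from the induction hypothesis, part (i) supplies a monic $g$ of degree $m$ with $g(x_{n+k})=0$ in $R^\Lambda(n+k)$, and one takes $k_n=k+m$. For $\nu\in I^{n+k+m}$ with $\Supp_i(\nu)\ge k+m$, either $\Supp_i(\nu_{\le n+k-1})\ge k$ and the induction hypothesis (together with the easy extension of idempotent vanishing to longer words) already kills $e(\nu)$, or else the last $m+1$ entries of $\nu$ are all $i$. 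In the second case one applies Lemma~\ref{Lemma: zeros} $m$ times along this tail: since $i$ is real, $\P_i\in\k_0^\times$ and the lemma gives $(\partial_a f)e(\nu)=0$ from $fe(\nu)=0$, so
\[
\bigl(\partial_{n+k+m-1}\cdots\partial_{n+k}\,g(x_{n+k})\bigr)e(\nu)=0.
\]
But $m$ divided differences applied to a monic degree-$m$ polynomial yield $\pm1$, hence $e(\nu)=0$. This avoids any $\Q$-corrections and any need to permute entries across non-$i$ letters.
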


\begin{proof} (i) By induction on $a$, it is enough to show that
\begin{center}
For any monic polynomial $g(u)$, we can find
a monic polynomial $h(u)$ such that \\
we have $h(x_{a+1})M=0$ for any $R(\beta)$-module $M$ with $g(x_a)M=0$.
\end{center}
If $\nu_a=\nu_{a+1}$, then Lemma \ref{Lemma: zeros} implies that
$$ g(x_{a+1})\P_{\nu_a}(x_a,x_{a+1})\P_{\nu_a}(x_{a+1},x_a) e(\nu)M=0.$$
By the definition of $\P_i(u,v)$ given in
$\eqref{Def: P_i}$,
$(g(x_{a+1})\P_{\nu_a}(x_a,x_{a+1})\P_{\nu_a}(x_{a+1},x_a) )$
is a monic polynomial in $x_{a+1}$ with coefficient in $\k[x_a]$.
Hence we can choose a monic polynomial $h(x_{a+1})$ in the
ideal generated by $g(x_a)$ and $g(x_{a+1})\P_{\nu_a}(x_a,x_{a+1})\P_{\nu_a}(x_{a+1},x_a)$ in
$\k[x_a,x_{a+1}]$. Thus
$$h(x_{a+1})e(\nu)M=0.$$

 If $\nu_{a} \neq \nu_{a+1}$, then
$$g(x_{a+1}) \Q_{\nu_{a},\nu_{a+1}}(x_a,x_{a+1})e(\nu)M = g(x_{a+1})\tau^2_a e(\nu)M
= \tau_a g(x_a) e(s_a\nu)\tau_aM =0.$$
Since $g(x_{a+1})\Q_{\nu_{a},\nu_{a+1}}(x_a,x_{a+1})$ is a
monic polynomial in $x_{a+1}$ with coefficient in $\k[x_a]$,
we can choose a monic polynomial $h(x_{a+1})$ as in the case of
$\nu_{a}= \nu_{a+1}$.

\medskip\noi
(ii)\ For $\nu\in I^n$, set $\Supp_i(\nu)=\#\set{k}{\text{$1\le k\le n$ and $\nu_k=i$}}$.
 Our assertion is equivalent to:
\eq
&&\parbox{60ex}{For all $n$, there exists $k_n\in \Z_{\ge 0}$ such that
$e(\nu)R^{\Lambda}(n+ k_n)=0$ for
        any $\nu \in I^{n+ k_n}$ with $\Supp_i(\nu) \ge  k_n$.}
\label{eq:kn}
\eneq
If $e(\nu)R^{\Lambda}(n+k)=0$ for any $\nu\in I^{n+k}$ such that $\Supp_i(\nu)\ge k$,
then one can easily see that
\begin{align}\label{Eq: k' k}
\text{$e(\nu')R^{\Lambda}(n+k')=0$ for any $k' \ge k$ and $\nu'\in I^{n+k'}$ such that
$\Supp_i(\nu'_{\le n+k})\ge k$.}
\end{align}
In order to prove \eqref{eq:kn},  we will use induction on $n$.
Assume that there exists $k=k_{n-1}$ such that
$$e(\nu)R^{\Lambda}(n-1+k)=0\quad \text{if $\Supp_i(\nu)\ge k$.}$$
By (i), there exists a monic polynomial $g(u)$ of degree $m \ge 0$ such that
$g(x_{n+k})R^\Lambda(n+k)=0$.
It suffices to show
$$e(\nu)R^{\Lambda}(n+k+m)=0 \text{ for } \Supp_i(\nu) \ge k+m.$$
If $\Supp_i(\nu_{\le n+k-1})\ge k$, then by $\eqref{Eq: k' k}$
$e(\nu)R^{\Lambda}(n+k+m)=0$. Thus we may assume that $\Supp_i(\nu_{\le n+k-1})\le  k-1$.
Hence we have $\nu_{\ge n+k}=(i,\ldots,i)$. Then the repeated application of
Lemma \ref{Lemma: zeros} implies
$$(\partial_{n+k+m-1} \cdots\partial_{n+k}g(x_{n+k}))e(\nu)R^{\Lambda}(n+k+m)=0.$$
Since $\partial_{n+k+m-1} \cdots \partial_{n+k} g(x_{n+k})=\pm1$, we can
choose $k_n=k+m$.
\end{proof}

\begin{lemma} \label{Lem: imaginary integrable}
If $i \in \Iim$ and $\langle h_i, \Lambda-\beta \rangle=0$, then
$$R^{\Lambda}(\beta+\alpha_i)=0.$$
\end{lemma}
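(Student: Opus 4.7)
The plan is to show that every idempotent $e(\nu)$ for $\nu \in I^{\beta+\alpha_i}$ vanishes in $R^{\Lambda}(\beta+\alpha_i)$; since $\sum_\nu e(\nu) = 1$, this will force the algebra to be zero. The first step is to unpack the hypothesis. Writing $\beta = \sum_j m_j \alpha_j$, we have
\[
\langle h_i, \Lambda - \beta \rangle = \langle h_i, \Lambda \rangle - \sum_j m_j a_{ij},
\]
where $\langle h_i, \Lambda \rangle \ge 0$ (since $\Lambda \in \mathtt{P}^+$) and each $-m_j a_{ij} \ge 0$ (since $a_{ij} \le 0$ for $i \in \Iim$). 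Thus the vanishing of the sum forces $\langle h_i, \Lambda \rangle = 0$ and $m_j a_{ij} = 0$ for every $j$. Consequently $a^{\Lambda}_i(u)$ is monic of degree zero, so $a^{\Lambda}_i(u) = 1$, and every $j \in \Supp(\beta)$ satisfies $a_{ij} = 0$.

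If $\nu_1 = i$, then $a^{\Lambda}(x_1) e(\nu) = a^{\Lambda}_i(x_1) e(\nu) = e(\nu)$, so $e(\nu)$ already lies in $R(\beta+\alpha_i) a^{\Lambda}(x_1) R(\beta+\alpha_i)$ and vanishes in $R^{\Lambda}(\beta+\alpha_i)$. For general $\nu$, let $k = k(\nu)$ be the smallest index with $\nu_k = i$ (well-defined because $\beta+\alpha_i$ has positive $\alpha_i$-coefficient) and induct on $k$, the base case being $k = 1$. For $k \ge 2$, set $j := \nu_{k-1}$; by minimality $j \ne i$, and since $\nu_{<k}$ has $j$ among its entries, $m_j > 0$, so $a_{ij} = 0$ by the first step. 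The support condition $d_j p + d_i q \le -(\alpha_j | \alpha_i) = 0$ in the definition of $\Q_{j,i}$ then collapses to $p = q = 0$, giving $\Q_{j,i}(u,v) = t^{0,0}_{j,i} \in \k_0^\times$. The defining relations therefore yield
\[
\tau_{k-1}^2 \, e(\nu) = t^{0,0}_{j,i} \, e(\nu), \qquad
e(\nu) = (t^{0,0}_{j,i})^{-1} \, \tau_{k-1} \, e(s_{k-1}\nu) \, \tau_{k-1},
\]
so $e(\nu)$ lies in the two-sided ideal generated by $e(s_{k-1}\nu)$. Since $k(s_{k-1}\nu) \le k - 1$, the inductive hypothesis applies, and $e(\nu) = 0$ in $R^{\Lambda}(\beta+\alpha_i)$.

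The main conceptual point is the first paragraph: extracting from a single vanishing inner product the double conclusion that $a^{\Lambda}_i$ is trivial and that $i$ is orthogonal to every simple root appearing in $\beta$. Both conclusions are essential and rely on $i$ being imaginary (so all $a_{ij} \le 0$) together with dominance of $\Lambda$. Once these are in hand, the sliding argument is automatic because $\Q_{j,i}$ is an invertible scalar exactly when $a_{ij} = 0$; the minimality of $k$ guarantees $j \ne i$, which is what keeps us away from the case $\Q_{i,i} = 0$ where no invertible swap would be available.
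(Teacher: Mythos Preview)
Your proof is correct and follows essentially the same route as the paper's: both extract from the hypothesis that $\langle h_i,\Lambda\rangle=0$ and $a_{ij}=0$ for all $j\in\Supp(\beta)\setminus\{i\}$, then induct on the position of the first occurrence of $i$ in $\nu$, using that $\Q_{j,i}\in\k_0^\times$ makes $e(\nu)$ a two-sided multiple of $e(s_{k-1}\nu)$. Your write-up is slightly more explicit about why $\Q_{j,i}$ reduces to an invertible scalar, but the argument is the same.
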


\begin{proof}
Since $\la h_i,\Lambda\ra$, $\la h_i,-\beta\ra\ge0$, the hypothesis
$\langle h_i, \Lambda-\beta \rangle=0$ implies $\langle
h_i, \Lambda \rangle =0$ and $\langle h_i, \beta \rangle=0$.
 Thus for all $j \in \Supp(\beta)\setminus\{i\}$, we have
$a_{ij}=0$. In particular we have $\Q_{j,i}\in\k_0^\times$. Since $\langle h_i, \Lambda \rangle=0$,
we have $e(i,\beta)R^{\Lambda}(\beta+\alpha_i)=0$.
For $\nu\in I^{\beta+\alpha_i}$, let $k$ be the smallest integer such that $\nu_k=i$.
We shall show $e(\nu)R^{\Lambda}(\beta+\alpha_i)=0$ by induction on $k$.
If $k$=1, it is obvious.
Assume $k>1$.
Hence  $\Q_{\nu_{k-1},\nu_{k}}e(\nu)R^{\Lambda}(\beta+\alpha_i)
=\tau_{ k-1}e(s_{k-1}\nu)\tau_{k-1}R^{\Lambda}(\beta+\alpha_i)$
vanishes since $(s_{k-1}\nu)_{k-1}=i$.
Since $\Q_{\nu_{k-1},\nu_{k}}\in\k_0^\times$, we obtain
the desired result $e(\nu)R^{\Lambda}(\beta+\alpha_i)=0$.
\end{proof}

For each $i \in I$, we define the functors
\begin{align*}
&E_i^{\Lambda}\cl \Mod(R^{\Lambda}(\beta+\alpha_i)) \to \Mod(R^{\Lambda}(\beta)),\\
&F_i^{\Lambda}\cl \Mod(R^{\Lambda}(\beta)) \to \Mod(R^{\Lambda}(\beta+\alpha_i)),
\end{align*}
by
\begin{align*}
&E_i^{\Lambda}(N)=e(\beta,i)N = e(\beta,i)R^{\Lambda}(\beta+\alpha_i) \otimes_{R^{\Lambda}(\beta+\alpha_i)}N, \\
&F_i^{\Lambda}(M)=R^{\Lambda}(\beta+\alpha_i)e(\beta,i)\otimes_{R^{\Lambda}(\beta)}M,
\end{align*}
where $M \in \Mod(R^{\Lambda}(\beta+\alpha_i))$ and $N \in \Mod(R^{\Lambda}(\beta))$.
\end{definition}

We introduce $\bl(R(\beta+\alpha_i),R^{\Lambda}(\beta)\br)$-bimodules
\begin{equation}\label{Def: Kernels}
\begin{aligned}
& F^{\Lambda}= R^{\Lambda}(\beta+\alpha_i)e(\beta,i)
= \dfrac{R(\beta+\alpha_i)e(\beta,i) }{R(\beta+\alpha_i)a^{\Lambda}(x_1)R(\beta+\alpha_i)e(\beta,i)}, \\
& K_0 = R(\beta+\alpha_i)e(\beta,i) \otimes_{R(\beta)} R^{\Lambda}(\beta)
= \dfrac{R(\beta+\alpha_i)e(\beta,i) }{R(\beta+\alpha_i)a^{\Lambda}(x_1)
R(\beta)e(\beta,i)}, \\
& K_1 = R(\beta+\alpha_i)e(i,\beta) \otimes_{R(\beta)} R^{\Lambda}(\beta)
= \dfrac{R(\beta+\alpha_i)e(i,\beta) }%
{R(\beta+\alpha_i)a^{\Lambda}(x_2)R^1(\beta)e(i,\beta)}.
\end{aligned}
\end{equation}

The right $R(\beta)$-module structure on
$R(\beta+\alpha_i)e(i,\beta)$ and the right
$R^{\Lambda}(\beta)$-module structure on $K_1$ are given by the
isomorphism $R(\beta) \overset{\sim} \to R^1(\beta) \hookrightarrow
R(\beta+\alpha_i)$. The bimodules $F^{\Lambda}$, $K_0$ and $K_1$ are the
kernels of the functors $F^{\Lambda}_i$, $F_i$ and $\overline{F}_i$
from $\Mod(R^{\Lambda}(\beta))$ to $\Mod(R(\beta+\alpha_i))$,
respectively.

Let $t_i$ be an indeterminate of degree $2d_i$. Then $\k[t_i]$ acts
from the right on $R(\beta+\alpha_i)e(i,\beta)$ and $K_1$ by
multiplying $x_1$. Similarly, $\k[t_i]$ acts from the right on
$R(\beta+\alpha_i)e(\beta,i)$, $F^{\Lambda}$ and $K_1$ by
multiplying $x_{n+1}$. Thus
 $K_0$, $F^{\Lambda}$ and $K_1$ have an $(R(\beta+\alpha_i),R^{\Lambda}(\beta)\otimes \k[t_i])$-bimodule
structure.

By  a similar argument to the one given
in \cite[Lemma 4.8, Lemma 4.16]{KK11},
we obtain the following lemmas which will be used in proving
Corollary~\ref{Cor: Exa F_i bar F_i} and Theorem~\ref{Thm: Exact}.
\begin{lemma} \label{Lem: Proj} \hfill
\bnum
\item
Both $K_1$ and $K_0$ are finitely generated projective right $R^{\Lambda}(\beta)\otimes \k[t_i]$-modules.
\item
In particular, for any $f(x_1,\ldots, x_{n+1})\in\k[x_1,\ldots, x_{n+1}]$
which is a monic polynomial in $x_1$, the right multiplication by $f$ on $K_1$
induces an injective endomorphism of $K_1$.
\ee
\end{lemma}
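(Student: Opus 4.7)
The plan is to prove that $K_0$ and $K_1$ are both free right $R^{\Lambda}(\beta)\otimes\k[t_i]$-modules of finite rank. Part (i) follows immediately, while part (ii) then reduces to the standard fact that a monic polynomial is a non-zero-divisor in a polynomial ring. The argument parallels that of \cite[Lemmas 4.8 and 4.16]{KK11}.

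First, I would establish that $R(\beta+\alpha_i)e(\beta,i)$ is free of rank $n+1$ as a right $R(\beta,i)=R(\beta)\otimes\k[t_i]$-module (via the natural embedding $R(n,1)\hookrightarrow R(n+1)$), and analogously that $R(\beta+\alpha_i)e(i,\beta)$ is free of rank $n+1$ as a right $\k[t_i]\otimes R(\beta)$-module via the twisted embedding $\xi$. The decomposition $R(n+1)=\bigoplus_{a=1}^{n+1}R(n,1)\tau_n\cdots\tau_a$ expresses $R(n+1)$ as a free left $R(n,1)$-module of rank $n+1$; by the anti-involution of $R(n+1)$ fixing the generators $e(\nu),x_k,\tau_\ell$ (equivalently, by the standard right-sided basis theorem for KLR algebras), $R(n+1)$ is also free of rank $n+1$ as a right $R(n,1)$-module, with basis $\{\tau_a\tau_{a+1}\cdots\tau_n:1\le a\le n+1\}$. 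Multiplying on the right by $e(\beta,i)\in R(n,1)$ and using the identities $R(n,1)e(\beta,i)=R(\beta,i)$ and $R(n+1)e(\beta,i)=e(\beta+\alpha_i)R(n+1)e(\beta,i)=R(\beta+\alpha_i)e(\beta,i)$ (by weight considerations), one obtains
\[
R(\beta+\alpha_i)e(\beta,i)\;=\;\bigoplus_{a=1}^{n+1}\tau_a\tau_{a+1}\cdots\tau_n\cdot R(\beta,i),
\]
which is free of rank $n+1$ over $R(\beta,i)$. The dual decomposition $R(n+1)=\bigoplus_{a=0}^{n}\tau_a\cdots\tau_1 R(1,n)$ used in Proposition~\ref{Pro: R(n)R1(n)} yields the analogous statement for $R(\beta+\alpha_i)e(i,\beta)$.

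Tensoring over $R(\beta)$ with $R^{\Lambda}(\beta)$ and rearranging,
\[
K_{\bullet}\;\cong\;R(\beta+\alpha_i)e(\bullet)\otimes_{R(\beta)\otimes\k[t_i]}\bl(R^{\Lambda}(\beta)\otimes\k[t_i]\br),
\]
so both $K_0$ and $K_1$ inherit freeness of rank $n+1$ over $R^{\Lambda}(\beta)\otimes\k[t_i]$; this proves (i). For (ii), write $f=x_1^N+\sum_{j<N}f_j(x_2,\ldots,x_{n+1})\,x_1^j$. Under the right $R^{\Lambda}(\beta)\otimes\k[t_i]$-action on $K_1$, the indeterminate $t_i$ corresponds to right multiplication by $x_1$, while each $x_{k+1}$ corresponds, via $\xi$, to right multiplication by $x_k\in R^{\Lambda}(\beta)$. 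Hence right multiplication by $f$ coincides with the action of an element $\tilde f\in R^{\Lambda}(\beta)[t_i]$ that is monic in $t_i$ with coefficients in $R^{\Lambda}(\beta)$. Since a monic polynomial over any ring is a non-zero-divisor in the polynomial extension (by the leading-coefficient argument), and $K_1$ is free over $R^{\Lambda}(\beta)[t_i]$, right multiplication by $\tilde f$, and hence by $f$, is injective.

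The main subtlety is the bookkeeping to identify the right $\k[t_i]$-action correctly: on $K_1$ this action is given by $x_1$, while on $K_0$ it is given by $x_{n+1}$, and one must verify through $\xi$ that the variables $x_2,\ldots,x_{n+1}$ appearing in $f$ translate properly so that $f$ becomes a genuinely monic element of $R^{\Lambda}(\beta)[t_i]$. Once this translation is in place, both assertions reduce to the PBW-type freeness of $R(n+1)$ over $R(n,1)$ combined with the elementary fact about non-zero-divisors in polynomial rings.
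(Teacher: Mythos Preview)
Your proposal is correct and follows essentially the same approach as the paper, which simply defers to \cite[Lemma~4.8, Lemma~4.16]{KK11}: you establish that $R(\beta+\alpha_i)e(\beta,i)$ and $R(\beta+\alpha_i)e(i,\beta)$ are free of rank $n+1$ over $R(\beta)\otimes\k[t_i]$ via the PBW-type decomposition (transported to the right side by the anti-involution), tensor down to $R^{\Lambda}(\beta)\otimes\k[t_i]$, and then deduce (ii) from the non-zero-divisor property of monic polynomials. This is exactly the argument of the cited lemmas, adapted to the generalized Kac--Moody setting.
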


\begin{lemma} \label{Lem: Decom} For $i \in I$ and $\beta \in \mathtt{Q}^+$ with $|\beta|=n$, we have
\bnum
\item $R(\beta+\alpha_i)a^{\Lambda}(x_1)R(\beta+\alpha_i) =\sum_{a=0}^{n}R(\beta+\alpha_i)
a^{\Lambda}(x_1)\tau_1 \cdots \tau_a$,
\item
$R(\beta+\alpha_i)a^{\Lambda}(x_1)R(\beta+\alpha_i)e(\beta,i)$

$=R(\beta+\alpha_i)a^{\Lambda}(x_1)R(\beta)e(\beta,i)
+R(\beta+\alpha_i)a^{\Lambda}(x_1)\tau_1 \cdots
\tau_n e(\beta,i)$.
\end{enumerate}
\end{lemma}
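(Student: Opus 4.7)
The plan is to establish (i) first, from which (ii) follows almost immediately. For (i), the inclusion of the right-hand side in the left-hand side is clear since each $\tau_1\cdots\tau_a\in R(\beta+\alpha_i)$. For the reverse inclusion, it suffices to show
$$
a^\Lambda(x_1)\cdot r\in J\seteq\sum_{a=0}^{n}R(\beta+\alpha_i)\,a^\Lambda(x_1)\,\tau_1\cdots\tau_a\qquad\text{for every }r\in R(\beta+\alpha_i),
$$
because $J$ is already closed under left multiplication by $R(\beta+\alpha_i)$.

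The key tool I would invoke is the ``mirror'' decomposition
$$
R(\beta+\alpha_i)=\sum_{a=0}^{n}R(1,n)\,\tau_1\tau_2\cdots\tau_a,
$$
which is the direct analog---obtained by exchanging the roles of the first and the last position---of the decomposition $R(n+1)=\bigoplus_{a=1}^{n+1}R(n,1)\,\tau_n\cdots\tau_a$ quoted just before Lemma~\ref{Lem: Mod R(n,1)}; the same combinatorial argument establishes it. Here $R(1,n)\cong R(1)\otimes R(n)$ sits inside $R(\beta+\alpha_i)$ as the subalgebra generated by $x_1$ together with the image $R^1(n)$ of $\xi_n$. Writing $r=\sum_a w_a\,\tau_1\cdots\tau_a$ with $w_a\in R(1,n)$, I would then exploit the fact that $a^\Lambda(x_1)$ commutes with every element of $R(1,n)$: indeed $x_1$ commutes with each $x_k$, and by the third clause of \eqref{Eq: Relation2} it commutes with every $\tau_\ell$ for $\ell\ge 2$, while $R(1)$ itself is commutative. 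Consequently $a^\Lambda(x_1)\,w_a=w_a\,a^\Lambda(x_1)$, and so
$$
a^\Lambda(x_1)\,r=\sum_{a=0}^{n}w_a\,a^\Lambda(x_1)\,\tau_1\cdots\tau_a\in J,
$$
which completes the proof of (i).

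For (ii), multiplying the equality of (i) on the right by $e(\beta,i)$ yields
$$
R(\beta+\alpha_i)\,a^\Lambda(x_1)\,R(\beta+\alpha_i)\,e(\beta,i)=\sum_{a=0}^{n}R(\beta+\alpha_i)\,a^\Lambda(x_1)\,\tau_1\cdots\tau_a\,e(\beta,i).
$$
For $0\le a\le n-1$ the word $\tau_1\cdots\tau_a$ uses only $\tau_k$ with $k\le n-1$ and therefore lies in the subalgebra $R(\beta)\otimes 1\hookrightarrow R(\beta+\alpha_i)$ acting on the first $n$ positions; hence those summands are absorbed into $R(\beta+\alpha_i)\,a^\Lambda(x_1)\,R(\beta)\,e(\beta,i)$. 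The one remaining summand, corresponding to $a=n$, is exactly $R(\beta+\alpha_i)\,a^\Lambda(x_1)\,\tau_1\cdots\tau_n\,e(\beta,i)$, the second term on the right-hand side of (ii).

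The main obstacle I anticipate is verifying the mirror decomposition, since the paper only explicitly records its opposite-side counterpart. However, because the two are genuine symmetric counterparts of one another, the original proof (essentially a PBW-type argument together with an induction on the length of the word in $\tau$'s) transfers verbatim after the obvious relabeling of indices; a more conceptual alternative is to combine the given decomposition with the anti-involution of $R(n+1)$ that fixes each $e(\nu)$, $x_k$, and $\tau_\ell$.
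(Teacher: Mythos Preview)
Your proof is correct and is essentially the argument the paper has in mind (the paper itself omits the proof, deferring to \cite[Lemma~4.16]{KK11}). The two key ingredients you identify---the PBW-type ``mirror'' decomposition $R(n+1)=\sum_{a=0}^{n}R(1,n)\tau_1\cdots\tau_a$ and the commutativity of $a^\Lambda(x_1)$ with $R(1,n)$---are exactly what is used there; indeed the paper reproduces precisely this computation later in the proof of Theorem~\ref{Thm: Comm E_i Lam F_j Lam}, writing ``Since $a^{\Lambda}(x_1)\tau_{k} = \tau_{k}a^{\Lambda}(x_1)$ for all $k \ge 2$'' and then expanding via the analogous decomposition. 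Your justification of the mirror decomposition via the anti-involution is fine (and the decomposition $R(n+1)=\bigoplus_{a=0}^{n}\tau_a\cdots\tau_1 R(1,n)$ already appears in the paper in~\eqref{Eq: varphi 2}); one small caution is that in this generalized setting $\P_i(u,v)$ is not assumed symmetric (Remark~\ref{Rem: condition of P}), so if you invoke the anti-involution you should check it is still well-defined---it is, but the direct PBW argument you mention avoids this issue entirely.
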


Let $\pi\cl K_0 \to F^{\Lambda}$ be the canonical projection and
$\widetilde{P}\cl R(\beta+\alpha_i)e(i,\beta) \to K_0$ be the right multiplication by
$a^{\Lambda}(x_1)\tau_1 \cdots \tau_n$ whose degree is
$$ 2d_i \langle h_i,\Lambda \rangle + (\alpha_i| -\beta) = (\alpha_i| 2\Lambda-\beta).$$

Then, using Lemma \ref{Lem: Decom},
one can see that
\begin{align} \label{Eq: tilde P}
{\rm Im}(\widetilde{P}) = {\rm Ker} \pi
=\dfrac{R(\beta+\alpha_i) a^{\Lambda}(x_1)
R(\beta+\alpha_i)e(\beta,i)}{R(\beta+\alpha_i)a^{\Lambda}(x_1)R(\beta)e(\beta,i)}
\subset K_0.
\end{align}

\begin{lemma} \label{Lem: wide P, P}
The map $\widetilde{P}\cl R(\beta+\alpha_i)e(i,\beta) \to K_0 $ is a
right $R(\beta)\otimes \k[t_i]$-linear homomorphism; i.e., for all $S \in R(\beta+\alpha_i)$, $1 \le a \le n$
and $1 \le b \le n-1$,
\begin{align*}
\widetilde{P}(S x_{a+1}) = \widetilde{P}(S) x_{a}, \quad
\widetilde{P}(S x_{1}) = \widetilde{P}(S) x_{n+1}, \quad
\widetilde{P}(S \tau_{b+1}) = \widetilde{P}(S) \tau_{b}.
\end{align*}
\end{lemma}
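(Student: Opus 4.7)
The plan is to verify each of the three identities as congruences in the quotient $K_0=R(\beta+\alpha_i)e(\beta,i)/J$, where $J\seteq R(\beta+\alpha_i)a^{\Lambda}(x_1)R(\beta)e(\beta,i)$. Setting $\alpha\seteq a^{\Lambda}(x_1)\tau_1\cdots\tau_n$, it suffices to prove $x_{a+1}\alpha\equiv\alpha x_a$ for $1\le a\le n$, $x_1\alpha\equiv\alpha x_{n+1}$, and $\tau_{b+1}\alpha\equiv\alpha\tau_b$ for $1\le b\le n-1$, all modulo $J$.

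First I would handle the $x$-identities. Since $a^{\Lambda}(x_1)$ commutes with $x_{a+1}$ when $a\ge 1$, the check reduces to computing $[x_{a+1},\tau_1\cdots\tau_n]$; using the defining relation \eqref{Eq: Relation2} together with the commutation of $x_{a+1}$ past $\tau_k$ for $k<a$ and of $x_a$ past $\tau_k$ for $k>a$, one obtains
\[
x_{a+1}\tau_1\cdots\tau_n-\tau_1\cdots\tau_n x_a=\tau_1\cdots\tau_{a-1}\,\P_{a,a+1}\,\tau_{a+1}\cdots\tau_n.
\]
The second identity is handled analogously by iterating $x_j\tau_j=\tau_jx_{j+1}-\P_{j,j+1}$ along the whole chain, producing
\[
x_1\tau_1\cdots\tau_n-\tau_1\cdots\tau_n x_{n+1}=-\sum_{k=1}^n\tau_1\cdots\tau_{k-1}\,\P_{k,k+1}\,\tau_{k+1}\cdots\tau_n.
\]
For the $\tau$-identity, $\tau_{b+1}$ commutes with $a^{\Lambda}(x_1)$ (as $b+1\ge 2$), and by moving $\tau_{b+1}$ past distant $\tau_k$'s and applying the braid relation \eqref{Eq: Braid Relation},
\[
\tau_{b+1}\tau_1\cdots\tau_n-\tau_1\cdots\tau_n\tau_b=\tau_1\cdots\tau_{b-1}\bl\overline{\Q}_b\P_{b,b+2}+\overline{\P}'_b\tau_b+\overline{\P}''_b\tau_{b+1}\br\tau_{b+2}\cdots\tau_n.
\]

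Each reduction leaves the same task: show that a correction of the form $a^{\Lambda}(x_1)\cdot T\cdot e(\beta,i)$ lies in $J$. This absorption step is the main obstacle. The strategy is to commute $a^{\Lambda}(x_1)$ rightward past the factors of $T$: every $\tau_k$ with $k\ge 2$ and every polynomial in $x_2,\ldots,x_{n+1}$ commutes with $a^{\Lambda}(x_1)$, so the only obstacle is the factor $\tau_1$ (when present), which I would handle via the identity $a^{\Lambda}(x_1)\tau_1=\tau_1 a^{\Lambda}(x_2)-(\partial_1 a^{\Lambda})\P_{1,2}$ derived from \eqref{Eq: partial}. In the first resulting piece $a^{\Lambda}$ has been shifted off the leftmost position, producing an expression that can be treated by induction on the number of remaining $\tau_1$ factors; in the second piece, the original $a^{\Lambda}(x_1)$ has been replaced by $(\partial_1 a^{\Lambda})\P_{1,2}\in\k[x_1,x_2]\subset R(\beta)$, allowing the whole expression (after commuting the remaining $a^{\Lambda}(x_1)$ to its target position) to be written as $U\cdot a^{\Lambda}(x_1)\cdot V\cdot e(\beta,i)$ with $U\in R(\beta+\alpha_i)$ and $V\in R(\beta)$, hence in $J$. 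This absorption argument follows the outline of the proof of \cite[Lemma 4.16]{KK11}, with the additional bookkeeping required by the polynomials $\P_i$.
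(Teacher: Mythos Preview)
Your setup is correct and matches the paper: you reduce each identity to showing that a correction term of the shape
\[
a^{\Lambda}(x_1)\,\tau_1\cdots\tau_{\ell}\,f\,\tau_{k}\cdots\tau_{n}\,e(\beta,i),\qquad f\in\k[x_1,\dots,x_{n+1}],\ \ell+2\le k,
\]
lies in $J=R(\beta+\alpha_i)a^{\Lambda}(x_1)R(\beta)e(\beta,i)$. The problem is your absorption step.

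Commuting $a^{\Lambda}(x_1)$ \emph{rightward} through $\tau_1$ is the wrong move. The identity $a^{\Lambda}(x_1)\tau_1=\tau_1 a^{\Lambda}(x_2)+(\partial_1 a^{\Lambda})\P_{1,2}$ produces a term containing $a^{\Lambda}(x_2)$ and a term containing no $a^{\Lambda}$ at all. Neither is of the form $U\,a^{\Lambda}(x_1)\,V$ with $V\in R(\beta)$, and iterating only pushes the polynomial to $a^{\Lambda}(x_3),a^{\Lambda}(x_4),\dots$; there is no ``remaining $a^{\Lambda}(x_1)$'' to commute into position, so the argument as written does not close. (Concretely, for $n=2$ and $a=2$ the correction is $a^{\Lambda}(x_1)\tau_1\P_{2,3}e(\beta,i)$; your procedure yields $\tau_1 a^{\Lambda}(x_2)\P_{2,3}e(\beta,i)+(\partial_1 a^{\Lambda})\P_{1,2}\P_{2,3}e(\beta,i)$, and neither summand is visibly in $J$.)

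The paper instead leaves $a^{\Lambda}(x_1)$ untouched and moves the tail $\tau_k\cdots\tau_n$ \emph{leftward}. Since $k\ge\ell+2\ge2$, the generator $\tau_k$ commutes with both $a^{\Lambda}(x_1)$ and $\tau_1,\dots,\tau_\ell$, and $f\tau_k=\tau_k(s_kf)+(\partial_kf)\P_{k,k+1}$ lets it pass $f$ at the cost of a polynomial. This yields
\[
a^{\Lambda}(x_1)\tau_1\cdots\tau_\ell f\,\tau_k\cdots\tau_n
=\tau_k\bigl(a^{\Lambda}(x_1)\tau_1\cdots\tau_\ell\,s_k(f)\,\tau_{k+1}\cdots\tau_n\bigr)
+a^{\Lambda}(x_1)\tau_1\cdots\tau_\ell\,f'\,\tau_{k+1}\cdots\tau_n,
\]
and downward induction on $k$ reduces to the base case $k=n+1$, where $\tau_1\cdots\tau_\ell f\in R(\beta)\otimes\k[x_{n+1}]$ (as $\ell\le n-1$), so the term lies in $J$ after pulling the $x_{n+1}$ powers to the left. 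This single claim, the paper's \eqref{Eq: Mod RLambda}, then dispatches all three identities immediately. So your commutator computations are fine; replace the rightward push of $a^{\Lambda}(x_1)$ by this leftward sweep of the tail $\tau_k\cdots\tau_n$.
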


\begin{proof}
First, we will verify that
\begin{align} \label{Eq: Mod RLambda}
&\parbox{77ex}{for any $f \in \k[x_1,\ldots,x_{n+1}]$,
$a^{\Lambda}(x_1)\tau_1 \cdots \tau_\ell f\tau_{k} \cdots \tau_n e(\beta,i)
\equiv 0 \mod R(\beta+\alpha_i)a^{\Lambda}(x_1)R(\beta)e(\beta,i)$ if
$\ell+2\le k\le n+1$.}
\end{align}
We will prove this by using downward induction on $k$. It is trivial for $k=n+1$.
Assume that $k\le n$ and our assertion is true for $k+1$. Then we have
\begin{equation} \label{Eq: -Ind k+1}
\begin{aligned}
a^{\Lambda}(x_1)\tau_1 \cdots \tau_\ell f \tau_k \cdots \tau_n e(\beta,i)& =
\tau_k a^{\Lambda}(x_1)\tau_1 \cdots \tau_\ell s_k(f)\tau_{k+1}
\cdots \tau_n e(\beta,i) \\
& \qquad \qquad \qquad +a^{\Lambda}(x_1)  \tau_1 \cdots \tau_\ell f'\tau_{k+1}
\cdots \tau_n e(\beta,i)
\end{aligned}
\end{equation}
for some $f' \in \k[x_1,\ldots,x_{n+1}]$, and
both the terms in the right-hand side of $\eqref{Eq: -Ind k+1}$ are 0
${\rm mod}\ R(\beta+\alpha_i) a^{\Lambda}(x_1)   R(\beta)  e(\beta,i) $
by the induction hypothesis.
Thus we obtain \eqref{Eq: Mod RLambda}.

\medskip
For $1 \le a \le n$, we have
\begin{align*}
x_{a+1}( a^{\Lambda}(x_1)   \tau_1 \cdots \tau_n  e(\beta,i) )
& =  a^{\Lambda}(x_1)  \tau_1 \cdots \tau_{a-1} (x_{a+1} \tau_{a}) \tau_{a+1} \cdots \tau_n
                                                                                     e(\beta,i) ,\\
& =  a^{\Lambda}(x_1)   \tau_1 \cdots \tau_{a-1}(\tau_{a} x_a + \P_{a,a+1})\tau_{a+1} \cdots \tau_n
                                                                                     e(\beta,i) , \\
& =  a^{\Lambda}(x_1)   \tau_1 \cdots \tau_n x_a  e(\beta,i)
   +  a^{\Lambda}(x_1)   \tau_1 \cdots \tau_{a-1} \P_{a,a+1}\tau_{a+1} \cdots \tau_n  e(\beta,i) ,\\
& \equiv  a^{\Lambda}(x_1)   \tau_1 \cdots \tau_n x_a  e(\beta,i)
                                                                \quad (\text{by } \eqref{Eq: Mod RLambda}).
\end{align*}

For the second assertion, we have
\begin{align*}
x_1( a^{\Lambda}(x_1)   \tau_1 \cdots \tau_n  e(\beta,i) )
&=  a^{\Lambda}(x_1)  (\tau_1 x_2 - \P_{1,2})\tau_2 \cdots \tau_n  e(\beta,i)  \\
&=  a^{\Lambda}(x_1)  \tau_1 x_2 \tau_2 \cdots \tau_n  e(\beta,i)
        - \P_{1,2}\tau_2 \cdots \tau_n  a^{\Lambda}(x_1)   e(\beta,i)  \\
& \equiv  a^{\Lambda}(x_1)  \tau_1 x_2 \tau_2 \cdots \tau_n  e(\beta,i)  \\
&=  a^{\Lambda}(x_1)  \tau_1 \tau_2 x_3 \tau_3 \cdots \tau_n  e(\beta,i)
 - a^{\Lambda}(x_1)  \tau_1 \P_{2,3} \tau_3 \cdots \tau_n  e(\beta,i)  \\
&\equiv  a^{\Lambda}(x_1)  \tau_1 \tau_2 x_3 \tau_3 \cdots \tau_n  e(\beta,i)
                                            \quad (\text{by } \eqref{Eq: Mod RLambda}) \\
& \quad \quad \vdots \\
& \equiv  a^{\Lambda}(x_1)  \tau_1 \cdots \tau_n x_{n+1}  e(\beta,i)
                    \quad {\rm mod} \ R(\beta+\alpha_i) a^{\Lambda}(x_1)   R(\beta) e(\beta,i) .
\end{align*}
For $1 \le b\le n-1$, we have
\begin{align*}
&\tau_{b+1}( a^{\Lambda}(x_1)   \tau_1 \cdots \tau_n  e(\beta,i) ) \\
&=  a^{\Lambda}(x_1)   \tau_{1}\cdots \tau_{b-1}(\tau_{b+1}\tau_b\tau_{b+1})\tau_{b+2} \cdots \tau_{n}
                                                                                 e(\beta,i)  \\
&=  a^{\Lambda}(x_1)   \tau_{1}\cdots \tau_{b-1}
(\tau_{b}\tau_{b+1}\tau_{b} + \overline{\Q}_b \P_{b,b+2} + \tau_b\overline{\P}'_b + \overline{\P}''_b\tau_{b+1})
\tau_{b+2}\cdots \tau_{n}  e(\beta,i)  \\
&=  a^{\Lambda}(x_1)   \tau_{1}\cdots \tau_{n}\tau_{b}  e(\beta,i) +
 a^{\Lambda}(x_1)   \tau_{1}\cdots \tau_{b-1}(\overline{\Q}_b \P_{b,b+2})\tau_{b+2}\cdots\tau_n  e(\beta,i)     \\
& \ \ +  a^{\Lambda}(x_1)   \tau_{1}\cdots \tau_{b}(\overline{\P}'_b)\tau_{b+2}\cdots\tau_n  e(\beta,i) +  a^{\Lambda}(x_1)   \tau_{1}\cdots \tau_{b-1}(\overline{\P}''_b)\tau_{b+1}\cdots\tau_n e(\beta,i) .
\end{align*}
By $\eqref{Eq: Mod RLambda}$, all the terms except the first one are
$0$ ${\rm mod} \ R(\beta+\alpha_i) a^{\Lambda}(x_1) R(\beta)e(\beta,i)$. Thus we
obtain
$$\tau_{b+1}a^{\Lambda}(x_1)\tau_1 \cdots \tau_n e(\beta,i)\equiv a^{\Lambda}(x_1)\cdots \tau_{n}\tau_{b}e(\beta,i)
\quad {\rm mod} \ R(\beta+\alpha_i)a^{\Lambda}(x_1)R(\beta) e(\beta,i).$$
\end{proof}

Since $\widetilde{P}$ is right $R(\beta) \otimes \k[t_i]$-linear and maps
$R(\beta+\alpha_i)a^{\Lambda}(x_2) R^1(\beta)e(i,\beta)$ to
$R(\beta+\alpha_i)a^{\Lambda}(x_1)R(\beta)e(\beta,i)$, it induces a map
$$P \cl K_1 \to K_0,$$
which is an $(R(\beta+\alpha_i),R(\beta) \otimes \k[t_i])$-bilinear
homomorphism. By $\eqref{Eq: tilde P}$, we get an exact sequence of
$(R(\beta+\alpha_i),R(\beta) \otimes \k[t_i])$-bimodules
$$ K_1 \overset{P}{\longrightarrow} K_0 \overset{\pi}{\longrightarrow} F^{\Lambda} \longrightarrow 0.$$
We will show that $P$ is actually injective by constructing an $(R(\beta+\alpha_i),R(\beta) \otimes
\k[t_i])$-bilinear homomorphism $Q$ such that $Q \circ P$ is injective.

For $1 \le a \le n$, we define an element $g_a$ of $R(\beta+\alpha_i)$ by
\begin{align}\label{Eq: g_a}
g_a =\sum_{ \substack{\nu \in I^{\beta+\alpha_i},\\ \nu_a \neq \nu_{a+1}}} \tau_a e(\nu)
+ \sum_{ \substack{\nu \in I^{\beta+\alpha_i},\\ \nu_a = \nu_{a+1}}}
 ((x_{a+1}-x_a)\P_{\nu_a}(x_a,x_{a+1})- (x_{a+1}-x_a)^2\tau_a)e(\nu).
\end{align}

\begin{lemma} For $1\le a \le n$, we have
\begin{align} \label{Eq: Commute g_a}
x_{s_a(b)}g_a = g_a x_b \ (1 \le b \le n+1) \quad \text{ and } \quad
\tau_a g_{a+1} g_a =  g_{a+1} g_a \tau_{a+1}.
\end{align}
\end{lemma}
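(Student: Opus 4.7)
The strategy is to verify both identities idempotent-by-idempotent: after multiplying by $e(\nu)$ for $\nu \in I^{\beta+\alpha_i}$, reduce everything to the defining relations \eqref{Eq: Relation1}--\eqref{Eq: Braid Relation}.

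For the first identity $x_{s_a(b)} g_a = g_a x_b$: when $\nu_a \neq \nu_{a+1}$, $g_a e(\nu) = \tau_a e(\nu)$ and the claim is exactly the ``otherwise'' branch of \eqref{Eq: Relation2}. When $\nu_a = \nu_{a+1}$, the statement is trivial for $b \notin \{a, a+1\}$; for $b \in \{a, a+1\}$ I expand $g_a e(\nu)$ from its definition and use the nontrivial branches $\tau_a x_a e(\nu) = x_{a+1}\tau_a e(\nu) - \P_{\nu_a}(x_a,x_{a+1})e(\nu)$ and $\tau_a x_{a+1} e(\nu) = x_a \tau_a e(\nu) + \P_{\nu_a}(x_a,x_{a+1})e(\nu)$. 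The $(x_{a+1}-x_a)$ factor in front of $\P_{\nu_a}$, combined with the $(x_{a+1}-x_a)^2$ in front of $\tau_a$, is rigged precisely so that the resulting $\P_{\nu_a}$ corrections cancel.

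For the braid-type identity $\tau_a g_{a+1} g_a = g_{a+1} g_a \tau_{a+1}$, I stratify on the equality pattern of $(\nu_a, \nu_{a+1}, \nu_{a+2})$: (i) all distinct; (ii) $\nu_a = \nu_{a+1} \neq \nu_{a+2}$; (iii) $\nu_a \neq \nu_{a+1} = \nu_{a+2}$; (iv) $\nu_a = \nu_{a+2} \neq \nu_{a+1}$; and (v) $\nu_a = \nu_{a+1} = \nu_{a+2}$. In cases (i)--(iii), the relevant $g$'s reduce to $\tau$'s with at most one polynomial correction, and the identity follows from the ``trivial'' branch of \eqref{Eq: Braid Relation} together with the first identity of this lemma to commute polynomials past $\tau$'s. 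In case (iv), the quadratic factor $(x_{a+2}-x_{a+1})^2$ appearing in the equal branch of $g_{a+1}$ on the LHS, and the factor $(x_{a+1}-x_a)^2$ appearing in the equal branch of $g_a$ on the RHS, both get transformed into $(x_{a+2}-x_a)^2$ by the outer $\tau_a$ (resp.\ $\tau_{a+1}$) via the $s_a$- (resp.\ $s_{a+1}$-) action on polynomials; the resulting discrepancy $(x_{a+2}-x_a)^2[\tau_{a+1}\tau_a\tau_{a+1} - \tau_a\tau_{a+1}\tau_a]$ is exactly the $\overline{\Q}_a \P_{a,a+2}$ term of \eqref{Eq: Braid Relation}, and it cancels against the polynomial contributions $(x_{a+2}-x_a)\P_i(x_a,x_{a+2})[\Q_{i,j}(x_a,x_{a+1}) - \Q_{i,j}(x_{a+2},x_{a+1})]$ produced on the two sides by $\tau_a^2 = \Q_{i,j}(x_a,x_{a+1})$ and $\tau_{a+1}^2 = \Q_{j,i}(x_{a+1},x_{a+2})$ once one invokes the explicit form of $\overline{\Q}_a$.

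The main obstacle is case (v): here both $g_a$ and $g_{a+1}$ carry their full four-term expansions, so each side of the identity becomes a large sum of monomials that must be regrouped by reduced word in $S_3$, namely $1, s_a, s_{a+1}, s_a s_{a+1}, s_{a+1} s_a, s_a s_{a+1} s_a$. I plan to move all polynomial factors to the left of each $\tau$-word by repeated application of the first identity of the lemma, and then match coefficients. The cancellation between the quadratic cross-terms from $(x_{a+1}-x_a)^2 \tau_a$ in $g_a$ and $(x_{a+2}-x_{a+1})^2 \tau_{a+1}$ in $g_{a+1}$, on the one hand, and the $\overline{\P}'_a \tau_a + \overline{\P}''_a \tau_{a+1}$ correction from \eqref{Eq: Braid Relation}, on the other, will be forced precisely by the defining formulas for $\overline{\P}'_i$ and $\overline{\P}''_i$; the remaining verification reduces to a polynomial identity in $\k[x_a, x_{a+1}, x_{a+2}]$ involving only $\P_i$. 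This bookkeeping is where the bulk of the computation lies, and it is the reason we built the particular asymmetric leading-term hypothesis on $\P_i(u,v)$ in \eqref{Def: P_i}.
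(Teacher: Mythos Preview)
Your treatment of the first identity $x_{s_a(b)}g_a = g_a x_b$ is exactly the paper's: an idempotent-by-idempotent check using \eqref{Eq: Relation2}, with the cancellation you describe.

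For the braid-type identity, however, the paper takes a completely different route that bypasses your five-case stratification. Set $S = \tau_a g_{a+1}g_a - g_{a+1}g_a\tau_{a+1}$. First, expanding each $g$ as (polynomial) $+$ (polynomial)$\cdot\tau$, the longest-word contribution to $\tau_a g_{a+1}g_a$ is a polynomial multiple of $\tau_a\tau_{a+1}\tau_a$ and that to $g_{a+1}g_a\tau_{a+1}$ is the \emph{same} polynomial multiple of $\tau_{a+1}\tau_a\tau_{a+1}$; applying \eqref{Eq: Braid Relation}, these cancel in $S$, leaving
\[
S = \mathsf{T}_1 + \mathsf{T}_2\tau_a + \mathsf{T}_3\tau_{a+1} + \mathsf{T}_4\tau_a\tau_{a+1} + \mathsf{T}_5\tau_{a+1}\tau_a
\]
with $\mathsf{T}_i$ polynomial. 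Second, the already-proved first identity gives $g_{a+1}g_a\,x_b = x_{s_{a+1}s_a(b)}\,g_{a+1}g_a$ exactly; combining this with the commutation of $\tau_a,\tau_{a+1}$ past $x_b$ (the correction terms on the two sides match) yields $Sx_b = x_{s_{a,a+2}(b)}S$ for every $b$. Third, since none of the permutations $e,\,s_a,\,s_{a+1},\,s_as_{a+1},\,s_{a+1}s_a$ equals $s_{a,a+2}$, comparing PBW-coefficients in this intertwining relation forces all $\mathsf{T}_i = 0$. This argument is uniform in $\nu$ and makes your hard case (v) evaporate.

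Your direct approach is not wrong---cases (i)--(iv) go as you say, and case (v) would eventually close---but the outline stops well short of executing (v), which is a genuinely large computation. One correction to your closing remark: the asymmetric leading-term hypothesis on $\P_i$ in \eqref{Def: P_i} plays no role in this lemma. The intertwining proof uses nothing about $\P_i$ beyond the defining relations, and indeed the identity holds for arbitrary $\P_i$. That hypothesis is invoked only later, in Lemma~\ref{Lem: nilpotency} and Lemma~\ref{Lem: Rel between F and S} (cf.\ Remark~\ref{Rem: condition of P}).
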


\begin{proof} For $\nu$ such that $\nu_a \neq \nu_{a+1}$, we have
\eq&&x_{s_a(b)}g_a e(\nu)= g_a x_be(\nu). \label{eq:g}
\eneq
We shall show \eqref{eq:g} when $\nu_a =\nu_{a+1}$. We have
\begin{align*}
&(x_ag_a -g_a x_{a+1})e(\nu) \\
&= \{  x_a(x_{a+1}-x_a)\P_{\nu_a}(x_a,x_{a+1})-x_a(x_{a+1}-x_a)^2\tau_a
-(x_{a+1}-x_a)x_{a+1}\P_{\nu_a}(x_a,x_{a+1})  \\
&  \quad \quad + (x_{a+1}-x_a)^2 (x_a \tau_a + \P_{\nu_a}(x_a,x_{a+1}))e(\nu) \} \\
&= \{ -(x_{a+1}-x_a)^2\P_{\nu_a}(x_a,x_{a+1})+ (x_{a+1}-x_a)^2\P_{\nu_a}(x_a,x_{a+1}) \} e(\nu) =0.
\end{align*}
Hence \eqref{eq:g} holds for $b=a+1$. The other cases can be proved similarly.

\smallskip
By $\eqref{Eq: Braid Relation}$, $S=\tau_ag_{a+1}g_a -g_{a+1}g_a
\tau_{a+1}$ does not contain the term $\tau_{a+1}\tau_a\tau_{a+1}$
and $\tau_a\tau_{a+1}\tau_a$ and is contained in the
$\k[x_a,x_{a+1},x_{a+2}]$-module generated by $1$, $\tau_a$,
$\tau_{a+1}$, $\tau_a\tau_{a+1}$, $\tau_{a+1}\tau_a$. That is, $S$
can be expressed as
$$ S=\mathsf{T}_1+\mathsf{T}_2\tau_a+\mathsf{T}_3\tau_{a+1}+\mathsf{T}_4\tau_a\tau_{a+1}
+\mathsf{T}_5\tau_{a+1}\tau_a$$ for some $\mathsf{T}_i \in
\k[x_a,x_{a+1},x_{a+2}] \ (1 \le i \le 5)$. By a similar argument
given in  \cite[Lemma 4.12]{KK11}, we have
$$ Sx_{b} = x_{s_{a,a+2}(b)}S  \quad \text{ for all } b.$$
Then one can show that all $\mathsf{T}_i$ must be zero. Thus our second assertion holds.
\end{proof}

\begin{proposition} \
\bnum
\item Let $\widetilde{Q}\cl R(\beta+\alpha_i)e(\beta,i) \to K_1$ be the
 left
$R(\beta+\alpha_i)$-linear homomorphism given by the multiplication of $g_n \cdots g_1$
from the right. Then
$\widetilde{Q}$ is  a right $\bl(R(\beta)\otimes k[t_i]\br)$-linear homomorphism. That is,
\begin{align*}
& \widetilde{Q}(S x_a)= \widetilde{Q}(S) x_{a+1} \ (1 \le a \le n), \quad
   \widetilde{Q}(S x_{n+1})=\widetilde{Q}(S)x_1 \\
&   \widetilde{Q}(S \tau_b)= \widetilde{Q}(S) \tau_{b+1} \ (1 \le b
\le n-1)
\end{align*}
for any $S \in R(\beta+\alpha_i)e(\beta,i)$.
\item The map $\widetilde{Q}$ induces a well-defined $(R(\beta+\alpha_i),R(\beta)\otimes k[t_i])$-bilinear
homomorphism
$$Q\cl K_0 = \dfrac{R(\beta+\alpha_i)e(\beta,i) }{R(\beta+\alpha_i)
a^{\Lambda}(x_1)R(\beta)e(\beta,i)} \to
       K_1 = \dfrac{R(\beta+\alpha_i)e(i,\beta) }{R(\beta+\alpha_i)
a^{\Lambda}(x_2)R^1(\beta)e(i,\beta)}.$$
\end{enumerate}
\end{proposition}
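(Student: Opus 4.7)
The plan is to prove the first assertion directly from the two commutation identities in \eqref{Eq: Commute g_a}, and then derive the second assertion by combining left $R(\beta+\alpha_i)$-linearity of $\widetilde{Q}$ (which is immediate from its definition as right multiplication by $g_n\cdots g_1$) with the right $R(\beta)$-linearity established in the first part. For the first assertion, I rewrite the first identity in \eqref{Eq: Commute g_a} as $x_c g_a = g_a x_{s_a(c)}$ (using that $s_a$ is an involution), so that $x$'s can be slid rightward through the chain $g_n\cdots g_1$. Because $s_a$ fixes $c$ unless $c\in\{a,a+1\}$, the generator $x_a$ commutes with $g_c$ for $c\ne a$ and satisfies $x_a g_a = g_a x_{a+1}$; thus $x_a g_n\cdots g_1 = g_n\cdots g_1 x_{a+1}$. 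The $x_{n+1}$ case is the telescope $x_{n+1}g_n = g_n x_n$, $x_n g_{n-1} = g_{n-1} x_{n-1}, \ldots, x_2 g_1 = g_1 x_1$, yielding $x_{n+1} g_n\cdots g_1 = g_n\cdots g_1 x_1$. For $\tau_b$, the key observation is that $\tau_b$ commutes with $g_c$ whenever $|b-c|>1$ (since $\tau_b$ commutes with $\tau_c$ and with $x_c, x_{c+1}$ in that range), which reduces the problem to $\tau_b g_{b+1} g_b g_{b-1}\cdots g_1$; the braid-like identity $\tau_b g_{b+1}g_b = g_{b+1}g_b \tau_{b+1}$ in \eqref{Eq: Commute g_a} converts this to $g_{b+1}g_b \tau_{b+1} g_{b-1}\cdots g_1$, and finally $\tau_{b+1}$ commutes with each remaining $g_c$ (for $c\le b-1$ we have $|b+1-c|\ge 2$), producing $g_n\cdots g_1 \tau_{b+1}$.

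For the second assertion, I first record the auxiliary identity $e(\mu)g_a = g_a e(s_a\mu)$ for every $\mu\in I^{\beta+\alpha_i}$, which follows by direct inspection of \eqref{Eq: g_a} together with $\tau_a e(\mu) = e(s_a\mu)\tau_a$ and the fact that polynomials in the $x$'s commute with idempotents. Iterating yields $e(\beta,i) g_n\cdots g_1 = g_n\cdots g_1 e(i,\beta)$, and combining with the specialization $x_1 g_n\cdots g_1 = g_n\cdots g_1 x_2$ of the first assertion gives $a^{\Lambda}(x_1) e(\beta,i) g_n\cdots g_1 = g_n\cdots g_1\, a^{\Lambda}(x_2)\, e(i,\beta)$. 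Now a typical generator of $R(\beta+\alpha_i) a^{\Lambda}(x_1) R(\beta) e(\beta,i)$ has the form $T a^{\Lambda}(x_1) s\, e(\beta,i)$ with $T\in R(\beta+\alpha_i)$ and $s\in R(\beta)$. Since $R(\beta)$ (embedded into $R(\beta+\alpha_i)$ on the first $n$ positions) commutes with the idempotent $e(\beta,i)$, I rewrite this element as $T\cdot \bl(a^{\Lambda}(x_1) e(\beta,i)\br)\cdot s$, with the outer $\cdot s$ denoting right $R(\beta)$-multiplication. Applying $\widetilde{Q}$, and using left $R(\beta+\alpha_i)$-linearity together with the right $R(\beta)$-linearity of the first assertion, the image equals $T\cdot g_n\cdots g_1\cdot a^{\Lambda}(x_2)\cdot \xi_n(s)\cdot e(i,\beta)$, which lies manifestly in $R(\beta+\alpha_i) a^{\Lambda}(x_2) R^1(\beta) e(i,\beta)$. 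Hence $\widetilde{Q}$ descends to the desired $(R(\beta+\alpha_i),R(\beta)\otimes\k[t_i])$-bilinear map $Q\colon K_0\to K_1$.

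The main obstacle is the bookkeeping in the first assertion, particularly the $\tau_b$ identity: the argument succeeds only because the braid-like relation in \eqref{Eq: Commute g_a} transmutes $\tau_b$ into $\tau_{b+1}$, which is precisely the index that then commutes with every remaining $g_{b-1},\ldots,g_1$. This is ultimately the reason the quadratic correction $-(x_{a+1}-x_a)^2\tau_a$ appears in the definition \eqref{Eq: g_a} of $g_a$ rather than just $\tau_a$; without it, $g_{a+1} g_a$ would fail to satisfy the clean braid-like identity. Once the first assertion is secured, the second is essentially formal, reducing to the slide $a^{\Lambda}(x_1)\leadsto a^{\Lambda}(x_2)$ through the $g$-chain combined with the idempotent shift $e(\beta,i)\leadsto e(i,\beta)$.
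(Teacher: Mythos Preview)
Your approach is correct and is exactly what the paper intends by ``follows immediately from the preceding lemma'': part (i) is obtained by pushing $x$'s and $\tau$'s through the chain $g_n\cdots g_1$ using \eqref{Eq: Commute g_a}, and part (ii) follows from the exact identities $s\,g_n\cdots g_1=g_n\cdots g_1\,\xi_n(s)$ and $e(\beta,i)g_n\cdots g_1=g_n\cdots g_1 e(i,\beta)$, which send $a^\Lambda(x_1)R(\beta)e(\beta,i)$ into $a^\Lambda(x_2)R^1(\beta)e(i,\beta)$. One small slip in your write-up: the claim ``$x_a$ commutes with $g_c$ for $c\ne a$'' is false at $c=a-1$ (there $x_a g_{a-1}=g_{a-1}x_{a-1}$); the correct chain argument is that $x_a$ commutes with $g_c$ for $c\ge a+1$, then $x_a g_a=g_a x_{a+1}$, and it is $x_{a+1}$ that commutes with the remaining $g_{a-1},\ldots,g_1$ --- this still gives $x_a g_n\cdots g_1=g_n\cdots g_1 x_{a+1}$, so your conclusion stands.
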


\begin{proof} The proof follows immediately from the preceding lemma.
\end{proof}

\begin{theorem} \label{Thm: A nu}
For each $\nu \in I^{\beta}$, set

$$ \mathsf{A}_{\nu} = a^{\Lambda}_i(x_1)
\prod_{\substack{1 \le a \le n, \\ \nu_a \neq
i}}\Q_{i,\nu_a}(x_1,x_{a+1}) \prod_{\substack{1 \le a \le n, \\
\nu_a = i}}\P_i(x_1,x_{a+1})\P_i(x_{a+1},x_1). $$

Then the following diagram is
commutative, in which the vertical arrow is the multiplication by
$\mathsf{A}_{\nu}$ from the right.
\begin{align} \label{Dia: inj}
 \xymatrix{
\dfrac{R(\beta+\alpha_i)e(i,\nu) }{R(\beta+\alpha_i)a^{\Lambda}(x_2)R^1(\beta)e(i,\nu)}
\ar[rr]^{P=a^{\Lambda}(x_1) \tau_1 \cdots \tau_n} \ar[d]_{\mathsf{A}_{\nu}}& &
\dfrac{R(\beta+\alpha_i)e(\nu,i) }{R(\beta+\alpha_i)a^{\Lambda}(x_1)R(\beta)e(\nu,i)}
\ar[dll]^{Q=g_n \cdots g_1} \\
\dfrac{R(\beta+\alpha_i)e(i,\nu) }{R(\beta+\alpha_i)a^{\Lambda}(x_2)R^1(\beta)e(i,\nu)}
}
\end{align}
\end{theorem}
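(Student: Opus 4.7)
The plan is to prove the commutativity of the diagram by induction on $n=|\beta|$. The base case $n=0$ is trivial: both $\tau_1\cdots\tau_n$ and $g_n\cdots g_1$ reduce to the identity, $P$ becomes multiplication by $a^{\Lambda}_i(x_1)$, and $\mathsf{A}_\nu=a^{\Lambda}_i(x_1)$, so the diagram commutes tautologically. For the inductive step, I would split off the outermost $\tau_n$ and $g_n$ by writing
\[\tau_1\cdots\tau_n\,g_n\cdots g_1=\tau_1\cdots\tau_{n-1}\,(\tau_n g_n)\,g_{n-1}\cdots g_1,\]
and analyze $\tau_n g_n$ acting on the idempotent $e(\nu,i)$ which arises after the rewriting $e(i,\nu)\tau_1\cdots\tau_n=\tau_1\cdots\tau_n e(\nu,i)$.

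The analysis splits into two cases. If $\nu_n\ne i$, then by \eqref{Eq: g_a} one has $e(\nu,i)g_n=\tau_n e(\nu_{<n},i,\nu_n)$, so $\tau_n g_n\,e(\nu,i)=\Q_{i,\nu_n}(x_n,x_{n+1})\,e(\nu_{<n},i,\nu_n)$ via \eqref{Eq: Relation1}. If $\nu_n=i$, a direct computation using \eqref{Eq: Relation2}, \eqref{Eq: partial}, and the $s_n$-symmetry of $(x_{n+1}-x_n)^2$ yields
\[\tau_n g_n\,e(\nu,i)=\P_i(x_n,x_{n+1})\,\P_i(x_{n+1},x_n)\,e(\nu,i)+R_n,\]
where $R_n=\P_i(x_n,x_{n+1})\bigl(\P_i(x_n,x_{n+1})-(x_{n+1}-x_n)\tau_n\bigr)e(\nu,i)$ is an error term carrying an extra $\tau_n$.

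Next, I would commute the resulting polynomial factor (in $x_n,x_{n+1}$) through $\tau_1\cdots\tau_{n-1}$ on the left: each $\tau_k$ ($k<n$) is pushed past the polynomial using \eqref{Eq: partial}, producing a correction supported on $\P_{k,k+1}$ that vanishes on the relevant intermediate idempotent because its labels at positions $k,k+1$ are distinct. After $n-1$ such commutations the first argument becomes $x_1$, and we recover exactly the $a=n$ factor of $\mathsf{A}_\nu$, namely $\Q_{i,\nu_n}(x_1,x_{n+1})$ or $\P_i(x_1,x_{n+1})\P_i(x_{n+1},x_1)$. The remaining factor $\tau_1\cdots\tau_{n-1}g_{n-1}\cdots g_1$, acting on the sub-idempotent $e(i,\nu_{<n})$ via the natural embedding $R((\beta-\alpha_{\nu_n})+\alpha_i)\otimes R(\alpha_{\nu_n})\hookrightarrow R(\beta+\alpha_i)$, is reduced by the inductive hypothesis applied to $\beta'=\beta-\alpha_{\nu_n}$ and $\nu'=\nu_{<n}$, producing the remaining factors of $\mathsf{A}_\nu/a^{\Lambda}_i(x_1)$. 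Multiplying on the left by the prefactor $a^{\Lambda}_i(x_1)$ coming from $P$ then assembles $\mathsf{A}_\nu$.

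The principal obstacle, and the main point of departure from the symmetrizable Kac-Moody case of \cite{KK11}, is the error term $R_n$, which appears only when $\nu_n=i\in\Iim$. One must show that its contribution $a^\Lambda(x_1)\,\tau_1\cdots\tau_{n-1}\,R_n\,g_{n-1}\cdots g_1\,e(i,\nu)$ lies in the defining ideal $R(\beta+\alpha_i)\,a^\Lambda(x_2)\,R^1(\beta)\,e(i,\nu)$ of $K_1$. The strategy is to exploit the extra $\tau_n\in R^1(\beta)$ inside $R_n$ together with its companion $\P_i$-factor: using repeated applications of Lemma~\ref{Lemma: zeros} and the precise leading-coefficient normalisation of $\P_i$ stipulated in \eqref{Def: P_i} (cf.\ Remark~\ref{Rem: condition of P}), the element $a^{\Lambda}_i(x_1)\P_i(x_1,x_2)\P_i(x_2,x_1)$ can be converted into an element of $R(\beta+\alpha_i)\,a^\Lambda(x_2)\,R^1(\beta)$, absorbing the remaining error and closing the induction.
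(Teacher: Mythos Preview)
Your inductive setup and the computation of $\tau_n g_n e(\nu,i)$ are correct, but the core commutation step has a genuine gap. You claim that when pushing the polynomial factor left through $\tau_1\cdots\tau_{n-1}$, each correction $(\partial_k f)\,\P_{k,k+1}$ vanishes because ``its labels at positions $k,k+1$ are distinct.'' This is false in general: after the partial commutation the idempotent to the right of $\tau_k$ is $e(\nu_1,\ldots,\nu_k,i,\nu_{k+1},\ldots,\nu_n)$, so positions $k$ and $k+1$ carry labels $\nu_k$ and $i$. Whenever $\nu_k=i$ for some $k<n$ (i.e.\ whenever $\Supp_i(\beta)>1$, which is exactly the case where the $\P_i$--factors in $\mathsf{A}_\nu$ actually appear), the correction term is nonzero and there is no evident reason for it to lie in $R(\beta+\alpha_i)a^\Lambda(x_2)R^1(\beta)e(i,\nu)$. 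Your error--term strategy in the $\nu_n=i$ case has the same problem compounded with the extra $\tau_n$, and the appeal to Lemma~\ref{Lemma: zeros} is not enough: that lemma trades $a^\Lambda(x_1)$ for $a^\Lambda(x_2)\P_i\P_i$ as annihilators of a module, but it does not show that the specific element $a^\Lambda(x_1)\tau_1\cdots\tau_{n-1}R_n g_{n-1}\cdots g_1$ lies in the required two--sided ideal.

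The paper avoids this difficulty by commuting polynomials to the \emph{right} through $g_{n-1}\cdots g_1$ rather than to the left through the $\tau$'s: the exact intertwining relation $x_{s_a(b)}g_a=g_ax_b$ of \eqref{Eq: Commute g_a} has no correction term, so $f(x_n,x_{n+1})\,g_{n-1}\cdots g_1=g_{n-1}\cdots g_1\,f(x_1,x_{n+1})$ holds on the nose. This already handles the case $\nu_n\neq i$ cleanly. For $\nu_n=i$ the paper does \emph{not} split off an error term; instead it keeps $\tau_n(x_{n+1}-x_n)\P_i(x_{n+1},x_n)$ intact, commutes the polynomial part past the $g$'s to obtain a factor $(x_{n+1}-x_1)\P_i(x_{n+1},x_1)$ on the far right, and then exploits the right $\bl R^\Lambda(\beta)\otimes\k[t_i]\br$--linearity of $Q\circ P$ together with the injectivity statement of Lemma~\ref{Lem: Proj}\,(ii) to cancel that factor and close the induction. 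This is the key new idea you are missing.
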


\begin{proof}
It suffices to show that
\begin{equation} \label{Eq: claim1}
\begin{aligned}
a^{\Lambda}(x_1)\tau_1 \cdots \tau_n g_n \cdots g_1 e(i,\nu)
&= a^{\Lambda}(x_1)\tau_1 \cdots \tau_n e(\nu,i) g_n \cdots g_1  \\
& \equiv \mathsf{A}_{\nu}e(i,\nu) \quad {\rm mod} \ R(\beta+\alpha_i)
a^{\Lambda}(x_2) R^1(\beta)e(i,\nu).
\end{aligned}
\end{equation}
Note that
\begin{equation} \label{Eq: tau n g n}
\begin{aligned}
\tau_n e(\nu,i) g_n=
\left\{
\begin{array}{ll}
   \tau_n e(\nu,i)\tau_n = \Q_{i,\nu_n}(x_n,x_{n+1})e(\nu_{<n},i,\nu_n) &  \text{ if } \nu_n \neq i,\\
 \tau_n(x_{n+1}-x_n)\P_i(x_{n+1},x_{n}) e(\nu,i) &  \text{ if } \nu_n = i.  \\
\end{array}
\right.
\end{aligned}
\end{equation}

Indeed if $\nu_n=i$, then we have
\eqn
&&\hs{5ex}\tau_n \Bigl((x_{n+1}-x_n)\P_i(x_n,x_{n+1})- (x_{n+1}-x_n)^2\tau_n\Bigr) e(\nu,i)\\
&&\hs{5ex}=\Bigl(\tau_n (x_{n+1}-x_n)\P_i(x_n,x_{n+1})- \tau_n^2(x_{n+1}-x_n)^2
\Bigr) e(\nu,i)\\
&&\hs{5ex}=\Bigl(\tau_n (x_{n+1}-x_n)\P_i(x_n,x_{n+1})
- \tau_n \bl(\partial_n\P_i(x_n,x_{n+1})\br)(x_{n+1}-x_n)^2)\Bigr) e(\nu,i)\\
&&\hs{5ex}=\tau_n (x_{n+1}-x_n)\Bigl(\P_i(x_n,x_{n+1})-
\bl(\P_i(x_n,x_{n+1})-\P_i(x_{n+1},x_{n})\br)\Bigr) e(\nu,i)\\
&&\hs{5ex}=\tau_n (x_{n+1}-x_n)\P_i(x_{n+1},x_{n})e(\nu,i).
\eneqn

We will show \eqref{Eq: claim1} by induction on $n$. Assume first $n=1$. If $\nu_1 \neq i$, then it is already given by
$\eqref{Eq: tau n g n}$. If $\nu_1 = i$, then
\begin{align*}
a^{\Lambda}(x_1)\tau_1 e(i,i)g_1 & = a^{\Lambda}(x_1)\tau_1(x_2-x_1)\P_i(x_2,x_1)
e(i,i) \\
&= \bl(\tau_1 a^{\Lambda}(x_2)+ \dfrac{a^{\Lambda}(x_2) -a^{\Lambda}(x_1)}{x_1-x_2}\P_i(x_1,x_2)\br)(x_2-x_1)\P_i(x_2,x_1) e(i,i)\\
&= \bl(\tau_1 a^{\Lambda}(x_2)(x_2-x_1)\P_i(x_2,x_1)
- (a^{\Lambda}(x_2)-a^{\Lambda}(x_1))P_i(x_1,x_2)\P_i(x_2,x_1)\br)e(i,i) \\
& \equiv a^{\Lambda}(x_1)\P_i(x_1,x_2)\P_i(x_2,x_1) e(i,i) = \mathsf{A}_{\nu}e(i,i).
\end{align*}
Thus we may assume that $n >1$.

(i) First assume that $\nu_n \neq i$. Then we have
\begin{align*}
& a^{\Lambda}(x_1)\tau_1 \cdots \tau_n g_n \cdots g_1 e(i,\nu) \\
& =a^{\Lambda}(x_1)\tau_1 \cdots \tau_{n-1}\Q_{i,\nu_n}(x_n,x_{n+1})g_{n-1} \cdots g_1 e(i,\nu) \\
& =a^{\Lambda}(x_1)\tau_1 \cdots \tau_{n-1}g_{n-1} \cdots g_1 \Q_{i,\nu_n}(x_1,x_{n+1})e(i,\nu) \\
& \equiv \mathsf{A}_{\nu_{<n}}\Q_{i,\nu_n}(x_1,x_{n+1})e(i,\nu) = \mathsf{A}_{\nu}e(i,\nu).
\end{align*}

(ii) If $\nu_n = i$, then we have
\begin{equation} \label{Eq ; nu n i}
\begin{aligned}
& a^{\Lambda}(x_1)\tau_1 \cdots \tau_n g_n \cdots g_1 e(i,\nu) \\
&=a^{\Lambda}(x_1)\tau_1 \cdots \tau_n (x_{n+1}-x_n)\P_i(x_{n+1},x_{n})
g_{n-1} \cdots g_1 e(i,\nu) \\
&=a^{\Lambda}(x_1)\tau_1 \cdots \tau_n g_{n-1} \cdots g_1 (x_{n+1}-x_1)
\P_i(x_{n+1},x_{1}) e(i,\nu).
\end{aligned}
\end{equation}
Since $P$ and $Q$ are right $R(\beta) \otimes \k[t_i]$-linear, we have
\begin{equation} \label{Eq: commute x n+1}
\begin{aligned}
\ba{l}
x_{n+1}a^{\Lambda}(x_1)\tau_1 \cdots \tau_n g_n\cdots g_1  e(i,\nu)-
a^{\Lambda}(x_1)\tau_1 \cdots \tau_n g_n \cdots g_1 x_{n+1} e(i,\nu)\equiv0\\
\hs{40ex}\mod R(\beta+\alpha_i)a^{\Lambda}(x_2)R^1(\beta)e(i,\beta).
\ea
\end{aligned}
\end{equation}
By $\eqref{Eq ; nu n i}$, the left-hand side of $\eqref{Eq: commute
x n+1}$ is equal to
\begin{align} \label{Eq: commute setting}
a^{\Lambda}(x_1)\tau_1 \cdots \tau_{n-1}(x_{n+1}\tau_n-\tau_n x_{n+1})g_{n-1} \cdots g_1
(x_{n+1}-x_1)\P_i(x_{n+1},x_{1}) e(i,\nu).
\end{align}
Since
\begin{align*}
(x_{n+1}\tau_n-\tau_n x_{n+1})e(\nu,i) & = \{ (x_{n+1}\tau_n-\tau_n x_{n})+\tau_n(x_n-x_{n+1}) \} e(i,\nu) \\
&= \P_i(x_n,x_{n+1})+\tau_n(x_n-x_{n+1}),
\end{align*}
we have
\begin{equation} \label{Eq: reexpression}
\begin{aligned}
0&\equiv
a^{\Lambda}(x_1)\tau_1 \cdots \tau_{n-1}\P_i(x_n,x_{n+1}) g_{n-1} \cdots g_1
        (x_{n+1}-x_1)\P_i(x_{n+1},x_{1}) e(i,\nu) \\
& \qquad + a^{\Lambda}(x_1)\tau_1 \cdots \tau_{n}(x_n-x_{n+1}) g_{n-1} \cdots g_1
        (x_{n+1}-x_1)\P_i(x_{n+1},x_{1}) e(i,\nu) \\
& = a^{\Lambda}(x_1)\tau_1 \cdots \tau_{n-1} g_{n-1} \cdots g_1 (x_{n+1}-x_1)
\P_i(x_1,x_{n+1})\P_i(x_{n+1},x_{1}) e(i,\nu) \\
& \qquad -a^{\Lambda}(x_1) \tau_1 \cdots \tau_ng_{n-1} \cdots g_1 (x_{n+1}-x_1)^2
\P_i(x_{n+1},x_{1}) e(i,\nu) \\
&\equiv \mathsf{A}_{\nu_{<n}}(x_{n+1}-x_1)\P_i(x_1,x_{n+1})\P_i(x_{n+1},x_{1})
e(i,\nu)  \\
& \qquad -a^{\Lambda}(x_1)\tau_1 \cdots \tau_ng_{n-1} \cdots g_1 (x_{n+1}-x_1)^2
\P_i(x_{n+1},x_{1}) e(i,\nu)  \\
&=\Bigl(\mathsf{A}_{\nu_{<n}}\P_i(x_1,x_{n+1})
-a^{\Lambda}(x_1)\tau_1 \cdots \tau_ng_{n-1} \cdots g_1 (x_{n+1}-x_1)\Bigr)e(i,\nu)\\
&\hs{33ex}\times(x_{n+1}-x_1)\P_i(x_{n+1},x_{1}).
\end{aligned}
\end{equation}
Since the right multiplication of $(x_{n+1}-x_1)\P_i(x_{n+1},x_{1})$
on $K_1$ is injective by Lemma~\ref{Lem: Proj}, we conclude that
$$a^{\Lambda}(x_1)\tau_1 \cdots \tau_ng_{n-1} \cdots g_1 (x_{n+1}-x_1)e(i,\nu) \equiv
   \mathsf{A}_{\nu_{<n}}\P_i(x_1,x_{n+1})e(i,\nu).$$
Hence \eqref{Eq ; nu n i} implies that
\begin{align*}
a^{\Lambda}(x_1)\tau_1 \cdots \tau_n g_n \cdots g_1 e(i,\nu) & \equiv \mathsf{A}_{\nu_{<n}}
\P_i(x_1,x_{n+1})\P_i(x_{n+1},x_{1}) e(i,\nu)\\
& = \mathsf{A}_{\nu}e(i,\nu) \quad {\rm mod} \ R(\beta+\alpha_i)
a^{\Lambda}(x_2)R^1(\beta)e(i,\beta).
\end{align*}
\end{proof}

Since $K_1e(i,\nu)$ is a projective $R^{\Lambda}(\beta)\otimes
\k[t_i]$-module by Lemma \ref{Lem: Proj} and $\mathsf{A}_\nu$ is a
monic polynomial (up to a multiple of an invertible element) in
$t_i$, by  a similar argument to the one  in \cite[Lemma 4.17, Lemma 4.18]{KK11},
we conclude:

\begin{theorem} \label{Thm: P-injective}
We have a short exact sequence consisting of right projective $R^{\Lambda}(\beta)$-modules:
\begin{align}\label{Eq: The exact seq}
0 \to K_1 \xrightarrow{\;P\;} K_0 \to F^{\Lambda} \to 0.
\end{align}
\end{theorem}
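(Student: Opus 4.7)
The plan is to split the statement into (a) injectivity of $P$ and (b) right $R^\Lambda(\beta)$-projectivity of the three bimodules. The right-hand half of the sequence---exactness at $K_0$ and surjectivity onto $F^\Lambda$---was already arranged by the construction of $P$ and $\pi$ in~\eqref{Eq: tilde P} and the discussion preceding the theorem, so those points need no further work.

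Injectivity of $P$ is the decisive new content, and the plan is to read it off from the commutative triangle of Theorem~\ref{Thm: A nu}. Decomposing $K_1 = \bigoplus_{\nu\in I^\beta} K_1 e(i,\nu)$, that theorem identifies $Q\circ P$ on the summand $K_1 e(i,\nu)$ with right multiplication by $\mathsf{A}_\nu$. Each factor of $\mathsf{A}_\nu$ has an invertible leading coefficient when viewed as a polynomial in $x_1$: the coefficient $c_{i;0}=1$ for $a^\Lambda_i(x_1)$; the unit $t_{i,\nu_a}^{-a_{i,\nu_a},0}\in\k_0^\times$ for $\Q_{i,\nu_a}(x_1,x_{a+1})$; and the product $w_i^{1-a_{ii}/2,0}\,w_i^{0,1-a_{ii}/2}\in\k_0^\times$ for $\P_i(x_1,x_{a+1})\P_i(x_{a+1},x_1)$. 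Thus $\mathsf{A}_\nu$ is, up to a unit of $\k_0$, monic in $x_1$ with coefficients in $\k[x_2,\dots,x_{n+1}]$. Lemma~\ref{Lem: Proj}(ii) then guarantees that right multiplication by such a polynomial is injective on $K_1 e(i,\nu)$, so $Q\circ P$ is injective on each summand, and consequently $P$ itself is injective.

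For the projectivity, Lemma~\ref{Lem: Proj}(i) already gives that $K_0$ and $K_1$ are finitely generated projective right $R^\Lambda(\beta)\otimes\k[t_i]$-modules; since $R^\Lambda(\beta)\otimes\k[t_i]$ is free over $R^\Lambda(\beta)$ on the basis $\{t_i^m\}_{m\ge 0}$, projectivity descends to yield the claim for $K_0$ and $K_1$. For $F^\Lambda$, the plan is to mirror \cite[Lemmas~4.17 and 4.18]{KK11}: because $Q\circ P$ acts on each $K_1 e(i,\nu)$ as multiplication by a monic polynomial in $t_i$, one can split the exact sequence~\eqref{Eq: The exact seq} componentwise and realise $F^\Lambda$ as a direct summand of the projective module $K_0$. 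The main obstacle beyond the Kac--Moody case of \cite{KK11} is the careful bookkeeping of the leading coefficient of $\mathsf{A}_\nu$: because $\P_i$ is no longer assumed symmetric (cf.\ Remark~\ref{Rem: condition of P}), one must track the product $w_i^{1-a_{ii}/2,0}\,w_i^{0,1-a_{ii}/2}$ as the leading coefficient of $\P_i(u,v)\P_i(v,u)$ in $u$, which is precisely why these two constants were separately required to be units in~\eqref{Def: P_i}.
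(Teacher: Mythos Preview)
Your proposal is correct and follows essentially the same route as the paper: the paper's own argument is the terse observation that $K_1e(i,\nu)$ is projective over $R^\Lambda(\beta)\otimes\k[t_i]$ by Lemma~\ref{Lem: Proj} and that $\mathsf{A}_\nu$ is monic in $t_i$ up to a unit, after which it defers entirely to \cite[Lemmas~4.17 and~4.18]{KK11}. Your write-up expands exactly these points---spelling out why the leading coefficient of $\mathsf{A}_\nu$ in $x_1$ is a unit (including the contribution $w_i^{1-a_{ii}/2,0}w_i^{0,1-a_{ii}/2}$ from the non-symmetric $\P_i$), and invoking Lemma~\ref{Lem: Proj}(ii) to get injectivity of $Q\circ P$; the only phrase worth tightening is ``split the exact sequence componentwise,'' since the actual KK11 argument for projectivity of $F^\Lambda$ proceeds via a flatness/Tor-vanishing argument (showing $P\otimes_{R^\Lambda(\beta)} N$ remains injective for all $N$) rather than by exhibiting a splitting directly.
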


Since $K_1$, $K_0$ and $F^{\Lambda}$ are kernels of functors $\overline{F}_i$, $F_i$ and $F^{\Lambda}_i$,
respectively, we have

\begin{corollary} \label{Cor: Exa F_i bar F_i}
 For any $i \in I$ and $\beta \in \mathtt{Q}^+$, there exists an exact sequence of
$R(\beta+\alpha_i)$-modules
\begin{align} \label{Eq: Fi bar Fi Fi Lam}
0 \to q^{(\alpha_i|2\Lambda-\beta)}\overline{F}_i M \to F_i M \to F^{\Lambda}_i M \to 0,
\end{align}
which is functorial in $M \in \Mod (R^{\Lambda}(\beta))$.
\end{corollary}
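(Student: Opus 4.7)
The plan is to tensor the short exact sequence from Theorem~\ref{Thm: P-injective},
$$ 0 \to K_1 \xrightarrow{\;P\;} K_0 \to F^{\Lambda} \to 0, $$
on the right by $M$ over $R^{\Lambda}(\beta)$. Because all three terms of that sequence are projective, hence flat, right $R^{\Lambda}(\beta)$-modules, the functor $(-)\otimes_{R^{\Lambda}(\beta)} M$ preserves exactness. This immediately yields a short exact sequence of left $R(\beta+\alpha_i)$-modules, which is functorial in $M$ by construction of the tensor product.

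The second step is to recognise the three tensor products as the functors appearing in the statement. From the definitions \eqref{Def: Kernels} of $K_0$, $K_1$, $F^{\Lambda}$ together with associativity of the tensor product, one obtains
$$ K_1 \otimes_{R^{\Lambda}(\beta)} M \simeq R(\beta+\alpha_i)e(i,\beta)\otimes_{R(\beta)} M = \overline{F}_i M, $$
$$ K_0 \otimes_{R^{\Lambda}(\beta)} M \simeq R(\beta+\alpha_i)e(\beta,i)\otimes_{R(\beta)} M = F_i M, $$
while $F^{\Lambda}\otimes_{R^{\Lambda}(\beta)} M = F^{\Lambda}_i M$ directly from the definition of $F^{\Lambda}_i$. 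Note that the left $R(\beta+\alpha_i)$-actions are untouched by the tensoring, so these identifications are indeed isomorphisms of $R(\beta+\alpha_i)$-modules.

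Finally, one has to book-keep the grading. The morphism $P$ is induced by right multiplication by $a^{\Lambda}(x_1)\tau_1\cdots\tau_n$, whose total degree was already computed just before Lemma~\ref{Lem: wide P, P} to equal $(\alpha_i \mid 2\Lambda-\beta)$. Shifting the leftmost term by $q^{(\alpha_i\mid 2\Lambda-\beta)}$ makes $P\otimes\mathrm{id}$ degree-preserving, yielding precisely the exact sequence \eqref{Eq: Fi bar Fi Fi Lam}.

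Since Theorem~\ref{Thm: P-injective} supplies both the injectivity of $P$ and the required projectivity of $K_1$, $K_0$, $F^{\Lambda}$ as right $R^{\Lambda}(\beta)$-modules, no genuine obstacle remains in the argument above; the only nontrivial input is that projectivity, which transports flatness through the tensor product. Everything else is the routine transport of structure and a verification of the grading shift.
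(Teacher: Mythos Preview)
Your proof is correct and follows exactly the approach the paper intends: the corollary is stated immediately after the sentence ``Since $K_1$, $K_0$ and $F^{\Lambda}$ are kernels of functors $\overline{F}_i$, $F_i$ and $F^{\Lambda}_i$, respectively, we have'', so the paper simply tensors the short exact sequence of Theorem~\ref{Thm: P-injective} with $M$ and identifies the terms, which is precisely what you do. Your remark that projectivity of all three terms guarantees exactness is slightly more than needed (flatness of $F^{\Lambda}$ alone already kills $\mathrm{Tor}_1$), but it is what Theorem~\ref{Thm: P-injective} provides and is entirely sufficient.
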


Now we prove the main theorem of this section.
\begin{theorem} \label{Thm: Exact}
Set
\begin{align*}
\Proj(R^{\Lambda}) = \bigoplus_{\alpha \in \mathtt{Q}^+}\Proj(R^{\Lambda}(\alpha)), \quad
\Rep(R^{\Lambda}) = \bigoplus_{\alpha \in \mathtt{Q}^+}\Rep(R^{\Lambda}(\alpha)),\quad\text{and}\\
[\Proj(R^{\Lambda})] = \bigoplus_{\alpha \in \mathtt{Q}^+}[\Proj(R^{\Lambda}(\alpha))], \quad
[\Rep(R^{\Lambda})] = \bigoplus_{\alpha \in \mathtt{Q}^+}[\Rep(R^{\Lambda}(\alpha))].
\end{align*}
 Then the functors $E^{\Lambda}_i$ and $F^{\Lambda}_i$ are well-defined exact functors on
$\Proj(R^{\Lambda})$  and $\Rep(R^{\Lambda})$, and they induce endomorphisms
of the Grothendieck groups $[\Proj(R^{\Lambda})]$ and $[\Rep(R^{\Lambda})]$.
\end{theorem}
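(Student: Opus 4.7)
The plan is to establish, for each of $\E^\Lambda_i$ and $\F^\Lambda_i$, (a) exactness as an endofunctor of $\Mod(R^\Lambda)$, and (b) preservation of both the subcategories $\Proj(R^\Lambda)$ and $\Rep(R^\Lambda)$. Once (a) and (b) are in hand, the induced endomorphisms on the Grothendieck groups follow formally.

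For exactness, $\E^\Lambda_i$ is just multiplication by the idempotent $e(\beta,i)$, which is manifestly exact on any module category. For $\F^\Lambda_i$, which up to a degree shift equals the tensor functor with the bimodule $F^\Lambda=R^\Lambda(\beta+\alpha_i)e(\beta,i)$, the key input is Theorem~\ref{Thm: P-injective}: the short exact sequence $0\to K_1\to K_0\to F^\Lambda\to 0$ of right projective $R^\Lambda(\beta)$-modules implies that $F^\Lambda$ is itself projective, hence flat, as a right $R^\Lambda(\beta)$-module, whence $\F^\Lambda_i$ is exact.

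The preservation of $\Rep(R^\Lambda)$ is routine: $e(\beta,i)N$ is a subspace of $N$, and since $F^\Lambda$ is finitely generated as a right $R^\Lambda(\beta)$-module (being a quotient of the finitely generated $K_0$), the tensor product $F^\Lambda\otimes_{R^\Lambda(\beta)}M$ is finite-dimensional whenever $M$ is. The preservation of $\Proj(R^\Lambda)$ by $\F^\Lambda_i$ is likewise direct: $F^\Lambda$ is a direct summand of the regular module $R^\Lambda(\beta+\alpha_i)$ and hence projective as a left $R^\Lambda(\beta+\alpha_i)$-module, so for any f.g.\ projective $R^\Lambda(\beta)$-module $P$, written as a direct summand of $R^\Lambda(\beta)^k$, the tensor product $F^\Lambda\otimes_{R^\Lambda(\beta)}P$ is a direct summand of $(F^\Lambda)^k$, hence projective.

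The main obstacle is verifying that $\E^\Lambda_i$ sends $\Proj(R^\Lambda)$ to itself, which is equivalent to showing that $e(\beta,i)R^\Lambda(\beta+\alpha_i)$ is projective as a left $R^\Lambda(\beta)$-module. The natural strategy is to repeat the circle of ideas from Theorem~\ref{Thm: P-injective} with the roles of left and right exchanged: construct a short exact sequence of left projective $R^\Lambda(\beta)$-modules whose quotient is the desired bimodule, using appropriate mirror versions of the maps $P$ and $Q$. The non-symmetry of the polynomials $\P_i$ should cause no extra trouble here, since the key polynomial $\mathsf{A}_\nu$ appearing in Theorem~\ref{Thm: A nu} involves only the factor $\P_i(x_1,x_{a+1})\P_i(x_{a+1},x_1)$, which is symmetric in its two variables.
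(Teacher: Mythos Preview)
Your overall strategy matches the paper's: invoke Theorem~\ref{Thm: P-injective} for the right-projectivity of $F^\Lambda$ over $R^\Lambda(\beta)$, and handle $e(\beta,i)R^\Lambda(\beta+\alpha_i)$ by a mirror argument (the paper just says ``Similarly''). However, one of your justifications is wrong and needs to be replaced.

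Your parenthetical ``being a quotient of the finitely generated $K_0$'' is false: $K_0=R(\beta+\alpha_i)e(\beta,i)\otimes_{R(\beta)}R^\Lambda(\beta)$ is finitely generated projective over $R^\Lambda(\beta)\otimes\k[t_i]$ (Lemma~\ref{Lem: Proj}), but \emph{not} over $R^\Lambda(\beta)$, since $t_i=x_{n+1}$ acts freely on $K_0$. The correct argument for finite generation of $F^\Lambda$ over $R^\Lambda(\beta)$ is Lemma~\ref{Lem: nilpotency}: in $R^\Lambda(\beta+\alpha_i)$ the element $x_{n+1}$ satisfies a monic polynomial, so the $\k[t_i]$-action on $F^\Lambda$ factors through a finite quotient $\k[t_i]/(g)$, and then finite generation over $R^\Lambda(\beta)\otimes\k[t_i]$ descends. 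This is precisely why the paper's proof cites Lemma~\ref{Lem: nilpotency} alongside Theorem~\ref{Thm: P-injective}.

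For the mirror step you need not redo the computations. The map $\psi$ fixing each generator $e(\nu)$, $x_k$, $\tau_\ell$ and reversing products is an anti-involution of $R(\beta)$: the relations \eqref{Eq: Relation1}--\eqref{Eq: Braid Relation} are $\psi$-stable (for the braid relation use $\overline{\P}'_a\tau_a=\tau_a\overline{\P}'_a$ and $\overline{\P}''_a\tau_{a+1}=\tau_{a+1}\overline{\P}''_a$, noted after Definition~\ref{def:KLR}), and $\psi$ descends to $R^\Lambda$. Applying $\psi$ to the sequence \eqref{Eq: The exact seq} converts right-projectivity of $F^\Lambda$ into left-projectivity of $e(\beta,i)R^\Lambda(\beta+\alpha_i)$ at once. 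Your remark that $\mathsf{A}_\nu$ involves only the symmetric product $\P_i(u,v)\P_i(v,u)$ is correct but becomes unnecessary once $\psi$ is in hand.
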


\begin{proof}
By Proposition \ref{Prop: essence of KOP1}, Lemma
\ref{Lem: nilpotency} and Theorem \ref{Thm: P-injective},
$F^{\Lambda}$ is a finitely generated projective
module over $R^{\Lambda}(\beta)$.
Thus $F^\Lambda_i$ sends the finite-dimensional left
$R^\Lambda(\beta)$-modules to finite-dimensional
left $R^\Lambda(\beta+\alpha_i)$-modules. Similarly,
$e(\beta,i)R^\Lambda(\beta+\alpha_i)$ is a finitely generated
projective left $R^\Lambda(\beta)$-module and hence $E^\Lambda_i$
sends finitely generated projective left
$R^{\Lambda}(\beta+\alpha_i)$-modules to finitely generated
projective left $R^{\Lambda}(\beta)$-modules.
\end{proof}

The following lemma will be needed in the sequel.

\begin{lemma} \label{Lem : B nu}
Set
\begin{itemize}
\item $\mathsf{A}=\sum_{\nu \in I^{\beta}} \mathsf{A}_{\nu}e(i,\nu),$
\item $\mathsf{B}= \sum_{\nu \in I^{\beta}} a^{\Lambda}_i(x_{n+1})
\prod_{\substack{1 \le a \le n, \\ \nu_a \neq i}}\Q_{i,\nu_a}(x_{n+1},x_{a})
\prod_{\substack{1 \le a \le n, \\ \nu_a = i}}\P_i(x_{n+1},x_{a})\P_i(x_a,x_{n+1})e(\nu,i)$.
\end{itemize}
Then we have a commutative diagram
$$
\xymatrix{ K_1 \ar[r]^P \ar[d]_{\mathsf{A}}& K_0 \ar[d]^{\mathsf{B}} \ar[dl]|{Q}
           \\ K_1 \ar[r]_P & K_0}
$$
Here the vertical arrows are the multiplication of $\mathsf{A}$ and
$\mathsf{B}$ from the right,
respectively.
\end{lemma}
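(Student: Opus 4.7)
The diagram decomposes into two triangles plus an outer square. The upper-left triangle, $Q\circ P=\mathsf{A}$, is exactly the content of Theorem~\ref{Thm: A nu}, so nothing needs to be done for it. Once the lower-right triangle $P\circ Q=\mathsf{B}$ is in hand, the outer square commutes automatically via
\[
\mathsf{B}\circ P=(P\circ Q)\circ P=P\circ(Q\circ P)=P\circ\mathsf{A}.
\]
My plan is therefore to concentrate on establishing $P\circ Q=\mathsf{B}$ as endomorphisms of $K_0$.

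Both $P$ and $Q$ are left $R(\beta+\alpha_i)$-linear by construction, and right multiplication by the polynomial-valued $\mathsf{B}$ is also left $R(\beta+\alpha_i)$-linear. Since $K_0$ is generated as a left $R(\beta+\alpha_i)$-module by $\{[e(\nu,i)]:\nu\in I^\beta\}$, it suffices to verify, for each such $\nu$,
\[
e(\nu,i)\,g_n\cdots g_1\,a^\Lambda(x_1)\,\tau_1\cdots\tau_n \;\equiv\; \mathsf{B}_\nu\, e(\nu,i) \pmod{R(\beta+\alpha_i)\,a^\Lambda(x_1)\,R(\beta)\,e(\nu,i)}.
\]
This is the mirror of identity \eqref{Eq: claim1} in the proof of Theorem~\ref{Thm: A nu}, with $e(i,\nu)$ and $e(\nu,i)$ swapped and the order of the product reversed.

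I would prove this by induction on $n=|\beta|$, running in parallel with the proof of Theorem~\ref{Thm: A nu} but peeling off factors from the opposite end. When $\nu_n\ne i$, a direct use of the definition \eqref{Eq: g_a} gives $e(\nu,i)\,g_n=\tau_n\,e(\nu_{<n},i,\nu_n)$, which allows one to extract a clean factor of $\Q_{i,\nu_n}(x_{n+1},x_n)$ (after combining with the rightmost $\tau_n$) and reduce to the inductive hypothesis on $\nu_{<n}$; propagating $a^\Lambda(x_1)$ through the remaining $g$'s is handled by the right $R(\beta)\otimes\k[t_i]$-linearity of $\widetilde{Q}$. When $\nu_n=i$ one uses the full polynomial-plus-$\tau_n$ expression for $e(\nu,i)\,g_n$ from \eqref{Eq: g_a}, and after careful bookkeeping the computation culminates in an equation of the form
\[
\bl(e(\nu,i)\,g_n\cdots g_1\,a^\Lambda(x_1)\,\tau_1\cdots\tau_n-\mathsf{B}_\nu\, e(\nu,i)\br)\cdot(x_{n+1}-x_1)\P_i(x_{n+1},x_1)\equiv 0
\]
in $K_0$; the conclusion then follows by the analog of Lemma~\ref{Lem: Proj}(ii) for $K_0$ (right multiplication by a monic polynomial in $x_{n+1}$ is injective on the projective right $R^\Lambda(\beta)\otimes\k[t_i]$-module $K_0$), which strips off the common factor.

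The numerous error terms produced when commuting polynomials across $\tau_1\cdots\tau_n$ are dispatched by the vanishing identity \eqref{Eq: Mod RLambda}: any expression $a^\Lambda(x_1)\tau_1\cdots\tau_\ell\,f\,\tau_k\cdots\tau_n\,e(\beta,i)$ with a gap $\ell+2\le k$ already lies in the defining ideal of $K_0$. The main obstacle is, as in Theorem~\ref{Thm: A nu}, the case $\nu_n=i$: one must maneuver the computation into a shape to which the cancellation by a monic polynomial applies. The argument is notationally heavier than that of Theorem~\ref{Thm: A nu} but structurally identical, and the presence of the asymmetric polynomial $\P_i$ (see Remark~\ref{Rem: condition of P}) is precisely what makes the two triangle identities genuinely distinct rather than being equal by a superficial symmetry.
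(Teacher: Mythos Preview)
Your overall strategy is correct and matches the paper's intent: the paper's proof is literally the one line ``We can apply a similar argument given in \cite[Lemma 4.19]{KK11},'' so any detailed argument you supply is already more than the paper provides. You are right that the upper-left triangle is exactly Theorem~\ref{Thm: A nu}, that the outer square follows formally from the two triangles, and that the substance lies in establishing $P\circ Q=\mathsf{B}$ on $K_0$ by a direct computation mirroring the proof of Theorem~\ref{Thm: A nu}.

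There is, however, one genuine wrinkle in your induction setup. You propose to peel off $g_n$ on the left and ``the rightmost $\tau_n$'' and combine them. But in the product
\[
e(\nu,i)\,g_n\,\bigl(g_{n-1}\cdots g_1\,a^{\Lambda}(x_1)\,\tau_1\cdots\tau_{n-1}\bigr)\,\tau_n
\]
the factors $g_n$ and $\tau_n$ are separated by an element of $R(n)$ which does \emph{not} commute with $\tau_n$; this is the opposite of the situation in Theorem~\ref{Thm: A nu}, where $\tau_n$ and $g_n$ sit \emph{adjacent} in the middle of $a^{\Lambda}(x_1)\tau_1\cdots\tau_n\,g_n\cdots g_1$. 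So the step ``combining with the rightmost $\tau_n$'' is not immediate. The clean mirror of the Theorem~\ref{Thm: A nu} argument instead peels off the \emph{innermost} pair $g_1$ and $\tau_1$ (separated only by $a^{\Lambda}(x_1)$, which one shifts through the $g$'s via $g_a x_b = x_{s_a(b)} g_a$), and hence inducts on $\nu_1$ rather than on $\nu_n$. With that adjustment your outline goes through: the $\nu_1\neq i$ case produces the $\mathcal{Q}$-factor directly, the $\nu_1=i$ case is handled by the commutator trick and cancellation of a monic factor in $t_i=x_{n+1}$ using the $K_0$-analog of Lemma~\ref{Lem: Proj}(ii), and the error terms are absorbed by \eqref{Eq: Mod RLambda} exactly as you say.
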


\begin{proof} We can apply a similar argument given in \cite[Lemma 4.19]{KK11}.
\end{proof}

\vskip 2em

\section{ Categorification of $V(\Lambda)$} \label{Sec: Categorification}

In this chapter, we will show that the cyclotomic
Khovanov-Lauda-Rouquier algebra $R^{\Lambda}$ categorifies the
irreducible highest weight $U_q(\g)$-module $V(\Lambda)$.

\begin{theorem}
For $i \neq j \in I$, there exists a natural isomorphism
\begin{align} \label{Thm: Comm E_i Lam F_j Lam}
 E^{\Lambda}_iF^{\Lambda}_j \simeq q_i^{-a_{ij}}F^{\Lambda}_jE^{\Lambda}_i.
\end{align}
\end{theorem}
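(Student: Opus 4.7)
The plan is to deduce the isomorphism by comparing two short exact sequences obtained from Corollary~\ref{Cor: Exa F_i bar F_i}, exploiting the non-cyclotomic commutation relations from Section~\ref{Sec: Fuctors on Mod R}.

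First, I would apply the exact functor $E_i$ (multiplication by the idempotent $e(\beta+\alpha_j-\alpha_i,i)$) to the defining exact sequence
\begin{align*}
0 \to q^{(\alpha_j|2\Lambda-\beta)}\,\overline{F}_j M \to F_j M \to F^\Lambda_j M \to 0
\end{align*}
of Corollary~\ref{Cor: Exa F_i bar F_i}, noting that $E_i F^\Lambda_j M$ coincides with $E^\Lambda_i F^\Lambda_j M$ because the cyclotomic relation $a^\Lambda(x_1)=0$ is preserved by the idempotent cut. Invoking Theorem~\ref{Thm: Comm E_i F_j} ($E_iF_j\simeq q_i^{-a_{ij}}F_jE_i$) and Theorem~\ref{Thm: Comm E_i bar F_j } (i) ($E_i\overline{F}_j\simeq\overline{F}_jE_i$) for $i\ne j$, together with the identity $E_i M = E^\Lambda_i M$, this becomes
\begin{align*}
0 \to q^{(\alpha_j|2\Lambda-\beta)}\,\overline{F}_j E^\Lambda_i M \to q_i^{-a_{ij}} F_j E^\Lambda_i M \to E^\Lambda_i F^\Lambda_j M \to 0. \qquad (\ast)
\end{align*}

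Next, I would apply Corollary~\ref{Cor: Exa F_i bar F_i} directly to $E^\Lambda_i M \in \Mod(R^\Lambda(\beta-\alpha_i))$ with index $j$, and then shift the resulting sequence by $q_i^{-a_{ij}} = q^{-(\alpha_i|\alpha_j)}$. Using the symmetry $(\alpha_i|\alpha_j)=(\alpha_j|\alpha_i)$, this produces
\begin{align*}
0 \to q^{(\alpha_j|2\Lambda-\beta)}\,\overline{F}_j E^\Lambda_i M \to q_i^{-a_{ij}} F_j E^\Lambda_i M \to q_i^{-a_{ij}} F^\Lambda_j E^\Lambda_i M \to 0. \qquad (\ast\ast)
\end{align*}
Since the first two terms of $(\ast)$ and $(\ast\ast)$ agree, comparing cokernels will yield the desired natural isomorphism $E^\Lambda_i F^\Lambda_j M \simeq q_i^{-a_{ij}} F^\Lambda_j E^\Lambda_i M$, provided the left-hand injections in the two sequences coincide.

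The main obstacle will be verifying precisely this compatibility of injections. Both should realize the bimodule map $P$ of Theorem~\ref{Thm: P-injective}: for $(\ast\ast)$ this is the map $P$ associated to $(\beta-\alpha_i,j)$ acting on $E^\Lambda_i M$, while for $(\ast)$ it is the map $P$ for $(\beta,j)$ acting on $M$ after being pushed through the bimodule isomorphisms underlying Corollary~\ref{Cor: commutiation E_iF_j} and Theorem~\ref{Thm: Comm E_i bar F_j } (i). Because those identifications both originate from multiplication by $\tau_n$ via Proposition~\ref{Prop: twist by tau n} and~\eqref{Eq: E_i Bar F_i}, together with the vanishing of the $R(n,1)$-contribution upon cutting by $e(n,i)\bullet e(n,j)$ for $i\ne j$, a direct bimodule computation should show that the two realizations of $P$ agree after accounting for the grading shift $q_i^{-a_{ij}}$, so that the cokernels of $(\ast)$ and $(\ast\ast)$ are canonically identified.
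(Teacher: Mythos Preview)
Your approach is correct in outline but takes a different route from the paper. The paper works directly at the bimodule level: applying the functor $R^{\Lambda}(n)\otimes_{R(n)}\bullet\otimes_{R(n)}R^{\Lambda}(n)e(\beta)$ to the isomorphism of Corollary~\ref{Cor: commutiation E_iF_j} immediately identifies $q_i^{-a_{ij}}F^{\Lambda}_jE^{\Lambda}_iR^{\Lambda}(\beta)$ with the quotient of $e(n,i)R(n+1)e(n,j)$ by the \emph{one-sided} cyclotomic ideal $e(n,i)R(n)a^{\Lambda}(x_1)R(n+1)e(n,j)+e(n,i)R(n+1)a^{\Lambda}(x_1)R(n)e(n,j)$, whereas $E^{\Lambda}_iF^{\Lambda}_jR^{\Lambda}(\beta)$ is the quotient by the full two-sided ideal $e(n,i)R(n+1)a^{\Lambda}(x_1)R(n+1)e(n,j)$. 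Using the decomposition $R(n+1)=\bigoplus_a R(n,1)\tau_n\cdots\tau_a$ (in the spirit of Lemma~\ref{Lem: Decom}), the discrepancy between the two ideals reduces to the single cross term $e(n,i)R(n,1)\tau_n\cdots\tau_1\,a^{\Lambda}(x_1)\,\tau_1\cdots\tau_n R(n,1)e(n,j)$, which vanishes by a one-line idempotent argument: commuting the outer idempotents inward forces $\nu_1=i$ and $\nu_1=j$ simultaneously.

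Your route trades this ideal calculation for the commutative-square check you correctly flag as the main obstacle. That square is not automatic: the two vertical isomorphisms come from structurally different decompositions of $R(n+1)$ (the $\tau_n$-twist of Proposition~\ref{Prop: twist by tau n} versus the $R(n)R^1(n)$ embedding of Proposition~\ref{Pro: R(n)R1(n)}), and one must actually verify that right multiplication by $a^{\Lambda}(x_1)\tau_1\cdots\tau_n$ intertwines them with $a^{\Lambda}(x_1)\tau_1\cdots\tau_{n-1}\otimes\mathrm{id}$. This can be done, but it is a calculation of comparable length to the paper's. What your approach buys is structural uniformity with the $i=j$ case (diagram~\eqref{Dia: Comm at Ker}); what the paper's shortcut buys is that for $i\neq j$ the whole question collapses to an orthogonality of idempotents.
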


\begin{proof}
By Corollary \ref{Cor: commutiation E_iF_j}, we already know
\begin{align} \label{Eq: commutiation E_iF_j}
e(n,i)R(n+1)e(n,j) \simeq q_i^{-a_{ij}}R(n)e(n-1,j)\otimes_{R(n-1)}e(n-1,j)R(n).
\end{align}
Applying the functor $R^{\Lambda}(n)\otimes_{R(n)} \ \bullet \ \otimes_{R(n)}R^{\Lambda}(n)e(\beta)$ on
$\eqref{Eq: commutiation E_iF_j}$, we obtain
\begin{align*}
& \dfrac{e(n,i)R(n+1)e(\beta,j)}{e(n,i)R(n)a^{\Lambda}(x_1) R(n+1)e(\beta,j)+e(n,i)R(n+1)a^{\Lambda}(x_1)R(n)e(\beta,j)}
 \\
&\hs{10ex}\simeq R^{\Lambda}(n)e(n-1,j)\otimes_{R^{\Lambda}(n-1)}e(n-1,i)R^{\Lambda}(n)e(\beta)
= F^{\Lambda}_jE^{\Lambda}_iR^{\Lambda}(\beta).
\end{align*}

Note that
\begin{align*}
E^{\Lambda}_iF^{\Lambda}_jR^{\Lambda}(\beta)
=\left( \dfrac{e(n,i)R(n+1)e(n,j)}{e(n,i)R(n+1)a^{\Lambda}(x_1)R(n+1)e(n,j)} \right) e(\beta).
\end{align*}
Thus it suffices to show that
\begin{equation}\label{Eq: claim Comm E_i Lam F_j Lam}
\begin{aligned}
& e(n,i)R(n+1)a^{\Lambda}(x_1)R(n+1)e(n,j) \\
& =e(n,i)R(n)a^{\Lambda}(x_1)R(n+1)e(n,j)+e(n,i)R(n+1)a^{\Lambda}(x_1)R(n)e(n,j).
\end{aligned}
\end{equation}

Since $a^{\Lambda}(x_1)\tau_{k} = \tau_{k}a^{\Lambda}(x_1)$ for all $k \ge 2$, we have
\begin{align*}
& \ R(n+1)a^{\Lambda}(x_1)R(n+1) = \sum_{a=1}^{n+1}R(n+1)a^{\Lambda}(x_1)\tau_a \cdots \tau_n R(n,1) \\
& = R(n+1)a^{\Lambda}(x_1)R(n,1)+R(n+1) a^{\Lambda}(x_1)
\tau_1 \cdots \tau_n R(n,1) \\
& = R(n+1)a^{\Lambda}(x_1)R(n,1)+\sum_{a=1}^{n+1}R(n,1)\tau_n \cdots \tau_a
a^{\Lambda}(x_1)\tau_1 \cdots \tau_n
    R(n,1) \\
& = R(n+1)a^{\Lambda}(x_1)R(n,1)+R(n,1)a^{\Lambda}(x_1)R(n+1)+  R(n,1)\tau_n \cdots \tau_1 a^{\Lambda}(x_1)\tau_1
\cdots \tau_n R(n,1).
\end{align*}

For $i \neq j$, we get
$$ e(n,i)R(n,1)\tau_n \cdots \tau_1 a^{\Lambda}(x_1)\tau_1 \cdots \tau_n R(n,1) e(n,j) =0,$$
and our assertion $\eqref{Eq: claim Comm E_i Lam F_j Lam}$ follows.
\end{proof}

\begin{theorem} \label{Thm: Main}
Let $\lambda=\Lambda-\beta$. Then there exist natural isomorphisms
of endofunctors on $\Mod(R^{\Lambda}(\beta))$ given below. \bnum
\item If $\langle h_i,\lambda \rangle \ge 0$, then we have
\begin{align} \label{Eq: Comm E_i Lam F_i Lam 1}
q_i^{-a_{ii}}F^{\Lambda}_iE^{\Lambda}_i \oplus
\bigoplus^{\langle h_i,\lambda \rangle-1}_{k=0} q_i^{2k}{\rm Id} \overset{\sim}{\to}
E^{\Lambda}_iF^{\Lambda}_i.
\end{align}
\item If $\langle h_i,\lambda \rangle < 0$, then we have
\begin{align} \label{Eq: Comm E_i Lam F_i Lam 2}
q_i^{-a_{ii}}F^{\Lambda}_iE^{\Lambda}_i \overset{\sim}{\to}
E^{\Lambda}_iF^{\Lambda}_i \oplus \bigoplus^{-\langle h_i,\lambda \rangle-1}_{k=0} q_i^{2k-2}{\rm Id}.
\end{align}
\end{enumerate}
\end{theorem}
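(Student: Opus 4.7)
The plan is to derive both parts of the theorem by a snake lemma applied to a commutative diagram assembled from three earlier exact sequences. First, apply the exact functor $E_i$ to the exact sequence of Corollary~\ref{Cor: Exa F_i bar F_i}
\begin{align*}
0 \to q^{(\alpha_i|2\Lambda-\beta)}\overline{F}_i M \to F_i M \to F^{\Lambda}_i M \to 0
\end{align*}
to obtain
\begin{align*}
0 \to q^{(\alpha_i|2\Lambda-\beta)} E_i\overline{F}_i M \to E_i F_i M \to E^{\Lambda}_i F^{\Lambda}_i M \to 0,
\end{align*}
and substitute the decomposition $E_iF_iM \cong q_i^{-a_{ii}} F_iE_iM \oplus M\otimes\k[t_i]$ from Theorem~\ref{Thm: Comm E_i F_j}. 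Independently, Corollary~\ref{Cor: Exa F_i bar F_i} applied to $E^{\Lambda}_iM \in \Mod(R^\Lambda(\beta-\alpha_i))$ and shifted by $q_i^{-a_{ii}}$ (using $(\alpha_i|\alpha_i)=d_ia_{ii}$) presents $q_i^{-a_{ii}}F^{\Lambda}_iE^{\Lambda}_iM$ as the cokernel of the inclusion $q^{(\alpha_i|2\Lambda-\beta)}\overline{F}_iE_iM \hookrightarrow q_i^{-a_{ii}}F_iE_iM$. Finally, Theorem~\ref{Thm: Comm E_i bar F_j } shifted by $q^{(\alpha_i|2\Lambda-\beta)}$, together with $(\alpha_i|2\Lambda-2\beta)=2d_i\langle h_i,\lambda\rangle$, yields
\begin{align*}
0 \to q^{(\alpha_i|2\Lambda-\beta)} \overline{F}_iE_iM \to q^{(\alpha_i|2\Lambda-\beta)} E_i\overline{F}_iM \to q_i^{2\langle h_i,\lambda\rangle} M\otimes\k[t_i] \to 0.
\end{align*}

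The key compatibility to verify is that, under the splitting of $E_iF_iM$, the inclusion $q^{(\alpha_i|2\Lambda-\beta)}E_i\overline{F}_iM \hookrightarrow E_iF_iM$ sends the subobject $q^{(\alpha_i|2\Lambda-\beta)}\overline{F}_iE_iM$ into the summand $q_i^{-a_{ii}}F_iE_iM$ via the inclusion extracted above, and induces on the quotient $q_i^{2\langle h_i,\lambda\rangle}M\otimes\k[t_i]$ a map into $M\otimes\k[t_i]$ which, modulo an invertible scalar in $\k_0^\times$, is multiplication by the monomial of $t_i$-degree $\langle h_i,\lambda\rangle$ (when $\langle h_i,\lambda\rangle\ge 0$) or its dual when $\langle h_i,\lambda\rangle<0$. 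This identification is extracted by tracking the composite through the short exact sequence $0\to K_1\xrightarrow{P}K_0\to F^{\Lambda}\to 0$ of Theorem~\ref{Thm: P-injective} and the auxiliary maps $P,Q,\mathsf{A},\mathsf{B}$ of Theorem~\ref{Thm: A nu} and Lemma~\ref{Lem : B nu}; the polynomial $\mathsf{A}_\nu$ produced there is precisely the link between the cyclotomic relation $a^{\Lambda}_i(x_1)=0$ and the $t_i$-action after cancellation of the $\Q_{i,\nu_a}$- and $\P_i$-factors.

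With the compatibility in hand, the snake lemma applied to the resulting diagram of short exact sequences yields, when $\langle h_i,\lambda\rangle\ge 0$, a short exact sequence
\begin{align*}
0 \to q_i^{-a_{ii}}F^{\Lambda}_iE^{\Lambda}_iM \to E^{\Lambda}_iF^{\Lambda}_iM \to \bigoplus_{k=0}^{\langle h_i,\lambda\rangle-1} q_i^{2k} M \to 0,
\end{align*}
which proves (i). When $\langle h_i,\lambda\rangle<0$, the same diagram instead has finite kernel rather than cokernel on the right vertical, and the snake lemma delivers
\begin{align*}
0 \to \bigoplus_{k=0}^{-\langle h_i,\lambda\rangle-1} q_i^{2k-2} M \to q_i^{-a_{ii}}F^{\Lambda}_iE^{\Lambda}_iM \to E^{\Lambda}_iF^{\Lambda}_iM \to 0,
\end{align*}
giving (ii). Splitting of these extensions, so that the direct sum decompositions hold on the nose, follows from the right projectivity of $K_0,K_1$ over $R^\Lambda(\beta)\otimes\k[t_i]$ established in Lemma~\ref{Lem: Proj}, combined with an explicit $\k[t_i]$-equivariant section on the identity summands.

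The principal obstacle is the identification of the induced map between the $M\otimes\k[t_i]$-factors as the claimed monomial multiplication. In the generalized Kac-Moody setting, the polynomials $\P_i$ of degree $1-\tfrac{a_{ii}}{2}$ enter $\mathsf{A}_\nu$ in both orderings $\P_i(u,v)$ and $\P_i(v,u)$, so that the scalar matching and grading-shift verification is more delicate than in the ordinary Kac-Moody case of~\cite{KK11}. The sign/direction bifurcation between (i) and (ii) then reduces to whether the induced map is an injection with finite cokernel or a surjection with finite kernel, and is handled uniformly by invoking the dominant weight hypothesis $\langle h_i,\Lambda\rangle\ge 0$ together with the invertibility of the leading coefficients of $\P_i$ and $a^{\Lambda}_i$ guaranteed by the conventions in Section~\ref{Sec: KLR corr QGKM}.
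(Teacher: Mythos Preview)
Your overall architecture is the same as the paper's: assemble the diagram \eqref{Dia: Comm at Ker} from Theorem~\ref{Thm: Comm E_i F_j}, Theorem~\ref{Thm: Comm E_i bar F_j }, and Corollary~\ref{Cor: Exa F_i bar F_i}, and apply the snake lemma to the induced map $A\colon q_i^{2\langle h_i,\lambda\rangle}\k[t_i]\otimes R^\Lambda(\beta)\to \k[t_i]\otimes R^\Lambda(\beta)$. The gap is in your identification of $A$.

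You assert that, up to an invertible scalar, $A$ is ``multiplication by the monomial of $t_i$-degree $\langle h_i,\lambda\rangle$''. This is false: the paper explicitly records that $A$ is $R^\Lambda(\beta)$-linear but \emph{not} $\k[t_i]$-linear (see the description of the arrows in \eqref{Dia: Comm at Ker}), so $A$ cannot be multiplication by any element of $\k[t_i]\otimes R^\Lambda(\beta)$. What is actually true, and what the paper spends the bulk of Section~\ref{Sec: Categorification} proving, is the weaker statement of Proposition~\ref{Prop: varphi k}: setting $\varphi_k\seteq A(t_i^k)$, the element $\gamma\varphi_k$ is a \emph{monic} polynomial in $t_i$ of degree $\langle h_i,\lambda\rangle+k$ (with nontrivial lower-order terms in general). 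This is enough to conclude $\mathrm{Ker}\,A=0$ and $\mathrm{Coker}\,A\cong\bigoplus_{k=0}^{\langle h_i,\lambda\rangle-1}\k t_i^k\otimes R^\Lambda(\beta)$ when $\langle h_i,\lambda\rangle\ge0$, by an upper-triangularity argument, but it is not the monomial-multiplication statement you claim.

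Proving Proposition~\ref{Prop: varphi k} is where the real content lies, and your proposal does not supply it. The argument does \emph{not} come directly from $\mathsf{A}_\nu$ in Theorem~\ref{Thm: A nu}; one needs the additional recursion $\varphi_{k+1}=E(\psi_k)+\varphi_k t_i$ of \eqref{Eq: formulas psi varphi}, the identity $\gamma A(t_i^k\mathsf{S})=t_i^k\mathsf{F}$ of \eqref{Eq: Rel F and S} coming from Lemma~\ref{Lem : B nu}, and the division-with-remainder Lemma~\ref{Lem: Rel between F and S} showing that $\gamma\varphi_k$ is the quotient of $t_i^k\mathsf{F}$ by $\mathsf{S}$. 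It is in Lemma~\ref{Lem: Rel between F and S} that the leading-coefficient hypotheses on $\P_i$ from \eqref{Def: P_i} are actually used (cf.\ Remark~\ref{Rem: condition of P}); your remark that ``the scalar matching\ldots is more delicate'' correctly locates the issue but does not resolve it. Finally, your splitting argument via right projectivity over $R^\Lambda(\beta)\otimes\k[t_i]$ is not what is used: the functorial direct-sum decomposition requires a bimodule splitting, which is obtained (as in \cite{KK11}) from the explicit description of $\mathrm{Coker}\,A$ furnished by Proposition~\ref{Prop: varphi k}, not from abstract projectivity.
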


The rest of this section is devoted to the proof of this theorem.

\bigskip
Consider the following commutative diagram with exact rows and
columns derived from Theorem \ref{Thm: Comm E_i F_j}, Theorem
\ref{Thm: Comm E_i bar F_j } and Corollary \ref{Cor: Exa F_i bar F_i}:
\begin{equation} \label{Dia: Comm at Mod}
\begin{aligned}
\xymatrix
{ & 0 \ar[d] & 0 \ar[d]&  &  & \\
 0 \ar[r] & q_i^{(\alpha_i|2\Lambda-\beta)}\overline{F}_iE_i M \ar[d] \ar[r]
 & q_i^{-a_{ii}}F_iE_i M \ar[d]\ar[r] & q_i^{-a_{ii}} F^{\Lambda}_iE^{\Lambda}_iM \ar[d]\ar[r] & 0 \\
 0 \ar[r] & q_i^{(\alpha_i|2\Lambda-\beta)}E_i\overline{F}_i M \ar[d] \ar[r]
 & E_iF_i M \ar[d]\ar[r] & E^{\Lambda}_i F^{\Lambda}_i M \ar[r] & 0 \\
 & q_i^{(\alpha_i|2\Lambda-2\beta)}\k[t_i]\otimes M \ar[d]\ar[r] & \k[t_i]\otimes M \ar[d] & & \\
 & 0 & 0 & &
}
\end{aligned}
\end{equation}

By taking the kernel modules, we obtain the following commutative diagram of $(R(\beta),R^\Lambda(\beta))$-modules:

\begin{equation} \label{Dia: Comm at Ker}
\begin{aligned}
\xymatrix@C=3ex
{ & 0 \ar[d] & 0 \ar[d]&  &  & \\
 0 \ar[r] & q_i^{(\alpha_i|2\Lambda-\beta)}K'_1 \ar[d] \ar[r]^-{P'}
 & q_i^{-a_{ii}}K'_0 \ar[d]^F \ar[r]^-{G}
 & q_i^{-a_{ii}} F^{\Lambda}_iE^{\Lambda}_iR^{\Lambda}(\beta) \ar[d]\ar[r] & 0 \\
 0 \ar[r] & q_i^{(\alpha_i|2\Lambda-\beta)}E_i K_1 \ar[d]_B \ar[r]^-{P}
 & E_i K_0 \ar[d]^C\ar[r] & E^{\Lambda}_i F^{\Lambda}_i R^{\Lambda}(\beta) \ar[r] & 0 \\
 & q_i^{(\alpha_i|2\Lambda-2\beta)}\k[t_i]\otimes R^{\Lambda}(\beta) \ar[d]\ar[r]^-{A}
 & \k[t_i]\otimes R^{\Lambda}(\beta) \ar[d] & & \\
 & 0 & 0 & &
}
\end{aligned}
\end{equation}
where
\begin{align*}
 K'_0& =F_iE_iR^{\Lambda}(\beta)=R(\beta)e(\beta-\alpha_i,i)\otimes_{R(\beta-\alpha_i)}e(\beta-\alpha_i)
       R^{\Lambda}(\beta) \\
 K'_1& =\overline{F}_iE_iR^{\Lambda}(\beta)= R(\beta)e(i,\beta-\alpha_i)\otimes_{R(\beta-\alpha_i)}
        e(\beta-\alpha_i)R^1(\beta) \otimes_{R(\beta)}R^{\Lambda}(\beta) \\
     & = R(\beta)e(i,\beta-\alpha_i)\otimes_{R(\beta-\alpha_i)}R^{\Lambda}(\beta).
\end{align*}
The homomorphisms in the diagram $\eqref{Dia: Comm at Ker}$ can be described as follows:
\begin{itemize}
\item $P$ is the right multiplication by $a^{\Lambda}(x_1)\tau_1 \cdots \tau_{n}$ and
      $(R(\beta),R^{\Lambda}(\beta)\otimes\k[t_i])$-bilinear.
\item Similarly, $P'$ is given by the right multiplication by $a^{\Lambda}(x_1)\tau_1 \cdots \tau_{n-1}$ on
      $R(\beta)e(i,\beta-\alpha_i)$.
\item The map $A$ is defined by the chasing the diagram. Note that it is $R^{\Lambda}(\beta)$-linear
      but {\it not\/} $\k[t_i]$-linear.
\item $B$ is given by taking the coefficient of $\tau_n \cdots \tau_1$ and
      $(R(\beta)\otimes\k[x_{n+1}],\k[x_1]\otimes R^1(\beta))$-bilinear.
\item $F$ is the multiplication by $\tau_n$ (See Proposition \ref{Prop: twist by tau n}).
\item $C$ is the cokernel map of $F$. Thus it is $(R(\beta),R^{\Lambda}(\beta))$-bilinear but does {\it not} commute with $t_i$.
\item $G$ is the canonical projection induced from $P'$. It is
$(R(\beta)\otimes \k[x_{n+1}],R^\Lambda(\beta)\otimes \k[x_{n+1}])$-bilinear.
\end{itemize}

Set $p = \Supp_i(\beta)$.
Note that the degree of $t_i$ in
$$
\displaystyle \prod_{\substack{1 \le a \le n, \\ \nu_a \neq i}}\Q_{i,\nu_a}(t_i,x_{a})
\prod_{\substack{1 \le a \le n, \\ \nu_a = i}}\P_i(t_i,x_{a+1})\P_i(x_{a+1},t_i)$$
is given by
\begin{align*}
 -\langle h_i,\beta-p \alpha_i \rangle + 2p(1 - \dfrac{a_{ii}}{2})
 = -\langle h_i,\beta \rangle+ p a_{ii} + 2p - p a_{ii} = -\langle h_i,\beta \rangle+2p.
\end{align*}

Define an invertible element $\gamma \in \k^{\times}$ by
\begin{equation} \label{Eq: def gamma}
\begin{aligned}
& (-1)^{p}\prod_{\substack{1 \le a \le n, \\ \nu_a \neq i}}\Q_{i,\nu_a}(t_i,x_{a})
\prod_{\substack{1 \le a \le n, \\ \nu_a = i}} \P_i(t_i,x_{a+1})\P_i(x_{a+1},t_i)\\
& = \gamma^{-1}t^{-\langle h_i,\beta \rangle+2p}_i +
   \left( \text{ terms of degree } < -\langle h_i,\beta \rangle+2p \text{ in } t_i \right).
\end{aligned}
\end{equation}

Set $\lambda=\Lambda - \beta$ and
\begin{align} \label{Eq: varphi k}
\varphi_k = A(t^k_i) \in \k[t_i] \otimes R^{\Lambda}(\beta),
\end{align}
which is of degree $2(\alpha_i|\lambda)+2d_i k =2d_i(\langle h_i,\lambda\rangle+k)$.

The following proposition is one of the key ingredients of the proof of Theorem \ref{Thm: Main}

\begin{proposition} \label{Prop: varphi k}
If $\langle h_i,\lambda\rangle+k \ge 0$, then $\gamma\varphi_k$ is a monic polynomial in $t_i$ of degree
$\langle h_i,\lambda\rangle+k$.
\end{proposition}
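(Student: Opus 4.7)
The plan is to compute $\varphi_k = A(t_i^k)$ by the diagram chase in \eqref{Dia: Comm at Ker} and to read off its leading $t_i$-coefficient. Since $B$ is surjective and $P$ sends $\ker B = q_i^{(\alpha_i|2\Lambda-\beta)}K'_1$ into $F(q_i^{-a_{ii}}K'_0) = \ker C$ by commutativity of the upper-left square, we have $\varphi_k = C(P(\xi_k))$ for any lift $\xi_k$ of $t_i^k$ under $B$. A convenient choice is $\xi_0 = \tau_n\cdots\tau_1\,e(i,\beta) \in E_iK_1$, which satisfies $B(\xi_0) = 1$ by the very definition of $B$ from the decomposition of $R(n+1)$ in Proposition~\ref{Pro: R(n)R1(n)}. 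Using the right $\k[x_1]$-linearity of $B$ and the right $\k[t_i]$-linearity of $P$ (with $t_i$ acting as $x_1$ on $K_1$ and as $x_{n+1}$ on $K_0$), the element $\xi_k := \xi_0\,x_1^k$ is a lift of $t_i^k$, and
$$P(\xi_k) = \tau_n\cdots\tau_1\,x_1^k\,a^\Lambda(x_1)\,\tau_1\cdots\tau_n\,e(\beta,i)\in E_iK_0.$$

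The next step is to reduce $P(\xi_k)$ modulo $\mathrm{Im}\,F$. Under the decomposition
$$e(n,i)R(n+1)e(n,i) = q_i^{-a_{ii}}R(n)e(n-1,i)\otimes_{R(n-1)}e(n-1,i)R(n)\ \oplus\ e(n,i)R(n,i)e(n,i)$$
of Corollary~\ref{Cor: commutiation E_iF_j}, the first summand (modulo cyclotomic) is $\mathrm{Im}\,F$ and the second is identified with $R^\Lambda(\beta)\otimes\k[t_i]$ via $x_{n+1}\mapsto t_i$. To extract the projection onto the second summand, I would sweep the outer $\tau_n\cdots\tau_1$ rightward past $a^\Lambda(x_1)x_1^k$ using \eqref{Eq: partial} and then collapse the resulting double string $\tau_n\cdots\tau_1\,\tau_1\cdots\tau_n$ via \eqref{Eq: Relation1} and \eqref{Eq: Braid Relation}, exactly as in the proof of Theorem~\ref{Thm: A nu}. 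The accumulated lower-$\tau$ corrections either cancel or fall into $\mathrm{Im}\,F$, and on each $e(\nu,i)$-component the surviving pure polynomial has leading $x_{n+1}$-coefficient equal to the product of the top coefficients of $a^\Lambda_i$, of $\Q_{i,\nu_a}(x_{n+1},x_a)$ for $\nu_a\neq i$, and of $\P_i(x_{n+1},x_a)\P_i(x_a,x_{n+1})$ for $\nu_a = i$, with the sign $(-1)^p$ coming from the $(x_{a+1}-x_a)^2$-tails of the $g_a$ that appear in the dual computation of Lemma~\ref{Lem : B nu}.

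By \eqref{Eq: def gamma} this product equals $\gamma^{-1}$, so the leading $t_i$-coefficient of $\varphi_k$ is $\gamma^{-1}$ at degree $\langle h_i,\lambda\rangle + k$. The grading constraint $\deg\varphi_k = 2d_i(\langle h_i,\lambda\rangle + k)$ on $\k[t_i]\otimes R^\Lambda(\beta)_{\ge 0}$ forbids any higher $t_i$-power, so $\gamma\varphi_k$ is monic of degree $\langle h_i,\lambda\rangle + k$; the hypothesis $\langle h_i,\lambda\rangle + k \ge 0$ is precisely what ensures that this degree is nonnegative, so that a genuine leading term exists. The main obstacle is the $\tau$-sweep itself: as in the analog \cite[Proposition~4.22]{KK11}, one must verify that no lower-$\tau$ correction contributes to the leading $t_i$-coefficient after projection to the polynomial summand. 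The non-symmetry of the $\P_i$ forced by Remark~\ref{Rem: condition of P} means both leading coefficients $w_i^{1-a_{ii}/2,0}$ and $w_i^{0,1-a_{ii}/2}$ enter $\gamma^{-1}$, and the sign $(-1)^p$ must be tracked carefully through the $g_a$-calculations of Theorem~\ref{Thm: A nu} and Lemma~\ref{Lem : B nu}; this bookkeeping is what distinguishes the present proof from its symmetric-$\P_i$ predecessor.
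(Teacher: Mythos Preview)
Your setup is correct up to the formula $P(\xi_k)=\tau_n\cdots\tau_1\,x_1^k\,a^{\Lambda}(x_1)\,\tau_1\cdots\tau_n\,e(\beta,i)$, and this is also the element the paper works with (see \eqref{Eq: Decom Im P wrt F }). But from there your argument has a genuine gap.

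The grading shortcut at the end is not valid. The algebra $R^{\Lambda}(\beta)$ is $\Z$-graded, not $\Z_{\ge0}$-graded: for $\nu_\ell=\nu_{\ell+1}=j\in\Ire$ the generator $\tau_\ell e(\nu)$ has degree $-(\alpha_j|\alpha_j)=-2d_j<0$. Hence a homogeneous element of $\k[t_i]\otimes R^{\Lambda}(\beta)$ of total degree $2d_i(\langle h_i,\lambda\rangle+k)$ may very well contain a term $c_m t_i^{m}$ with $m>\langle h_i,\lambda\rangle+k$ and $c_m$ of negative degree. So the constraint $\deg\varphi_k=2d_i(\langle h_i,\lambda\rangle+k)$ does not by itself bound the $t_i$-degree of $\varphi_k$, and ``there are no higher $t_i$-powers'' needs an independent proof. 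Your direct $\tau$-sweep would have to produce this bound explicitly; the assertion that the lower-$\tau$ corrections ``either cancel or fall into $\mathrm{Im}\,F$'' is precisely the content that is missing, and it is not what Theorem~\ref{Thm: A nu} does (that theorem collapses $a^{\Lambda}(x_1)\tau_1\cdots\tau_n\,g_n\cdots g_1$, not $\tau_n\cdots\tau_1\,a^{\Lambda}(x_1)\tau_1\cdots\tau_n$).

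The paper avoids the direct computation entirely. Using Lemma~\ref{Lem : B nu} one evaluates $A$ not at $t_i^k$ but at $t_i^k\mathsf{S}$, where $\mathsf{S}=\sum_\nu\prod_{\nu_a=i}(t_i-x_a)^2e(\nu)$ is monic of $t_i$-degree $2p$; the answer is $\gamma^{-1}t_i^k\mathsf{F}$ for an explicit monic $\mathsf{F}$ of $t_i$-degree $\langle h_i,\lambda\rangle+2p$ (this is \eqref{Eq: Rel F and S}). A short recursion \eqref{Eq: formulas psi varphi} shows $A(at_i)-A(a)t_i\in R^{\Lambda}(\beta)$, hence more generally $A(af)-A(a)f$ has $t_i$-degree $<\deg_{t_i}f$. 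Applying this with $a=t_i^k$, $f=\mathsf{S}$ gives $t_i^k\mathsf{F}=(\gamma\varphi_k)\mathsf{S}+\mathsf{h}_k$ with $\deg_{t_i}\mathsf{h}_k<2p$, so $\gamma\varphi_k$ is literally the quotient of $t_i^k\mathsf{F}$ by $\mathsf{S}$ and therefore monic of $t_i$-degree $\langle h_i,\lambda\rangle+k$. This division-with-remainder argument simultaneously pins down the leading coefficient and bounds the degree, without any appeal to the $\Z$-grading of $R^{\Lambda}(\beta)$ and without ever reducing the double string $\tau_n\cdots\tau_1\tau_1\cdots\tau_n$ directly.
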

Note that for $m<0$, we say that a polynomial $\varphi$ is a monic polynomial of degree $m$
if $\varphi=0$.

To prove Proposition \ref{Prop: varphi k}, we need some preparation.
Let
$$z = \sum_{k \in \Z_{>0}} a_k \otimes b_k\in R(\beta)e(\beta-\alpha_i,i) \otimes_{R(\beta-\alpha_i)}
e(\beta-\alpha_i,i)R^{\Lambda} (\beta), $$ where
$a_k \in R(\beta)e(\beta-\alpha_i,i)$ and $b_k \in e(\beta-\alpha_i,i)R^{\Lambda}(\beta)$.
Define a map $E\cl K'_0 \to E_i K_0$ by
\begin{align} \label{Eq: Def F}
z \mapsto \sum_{k \in \Z_{>0}} a_k \P_i(x_n,x_{n+1})b_k.
\end{align}

\begin{lemma} For $z \in R(\beta)e(\beta-\alpha_i) \otimes_{R(\beta-\alpha_i)} e(\beta-\alpha_i,i)R^{\Lambda}
(\beta) $, we have
\begin{align}
F(z)x_{n+1} = F(z(x_n \otimes 1)) + E(z).
\end{align}
\end{lemma}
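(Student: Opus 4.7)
The plan is to verify the identity on elementary tensors $z=a\otimes b$ with $a\in R(\beta)e(\beta-\alpha_i,i)$ and $b\in e(\beta-\alpha_i,i)R^\Lambda(\beta)$, and to extend by linearity. Since $F$ is multiplication by $\tau_n$ and the right $\k[t_i]$-action on $K_0$ is right multiplication by $x_{n+1}$ on the first tensor factor, the left-hand side reads $F(z)x_{n+1}=(a\tau_n x_{n+1})\otimes b$. The whole argument then hinges on the commutator of $\tau_n$ with $x_{n+1}$ from \eqref{Eq: Relation2}, applied against the correct idempotent.

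First I would pin down that idempotent. Because the right idempotent of $a$ in $R(\beta)$ is $e(\beta-\alpha_i,i)$ and the extra position $n+1$ in $R(\beta+\alpha_i)$ must carry weight $\alpha_i$ (by linear independence of $\Pi$), the right idempotent of $a$ viewed in $R(\beta+\alpha_i)$ is precisely $e(\beta-\alpha_i,i,i)$. The transposition $s_n$ fixes this sequence at positions $n,n+1$, so from $\tau_n e(\nu)=e(s_n\nu)\tau_n$ one also gets $a\tau_n=a\tau_n e(\beta-\alpha_i,i,i)$. Applying now the case $\ell=k+1=n+1$ with $\nu_k=\nu_{k+1}=i$ of \eqref{Eq: Relation2} yields
$$a\tau_n x_{n+1}=a\,x_n\tau_n+a\,\P_i(x_n,x_{n+1})$$
as an identity in $R(\beta+\alpha_i)$.

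Tensoring with $b$ over $R(\beta)$ gives $F(z)x_{n+1}=(a x_n\tau_n)\otimes b+\bigl(a\,\P_i(x_n,x_{n+1})\bigr)\otimes b$. Reading $z(x_n\otimes 1)=(a x_n)\otimes b$ as the right $R(\alpha_i)$-action on the first factor of $K_0'$ (with $x_n$ the variable at position $n$), the first summand is $F((ax_n)\otimes b)=F(z(x_n\otimes 1))$, and the second summand matches $E(z)$ by the definition \eqref{Eq: Def F}. I do not anticipate any serious obstacle: the whole argument is a single commutator identity, and the only care needed is the idempotent bookkeeping that guarantees the error term produced when sliding $x_{n+1}$ past $\tau_n$ is exactly $\P_i(x_n,x_{n+1})$, with no residual summation over $\nu$.
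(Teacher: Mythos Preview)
Your proof is correct and follows essentially the same route as the paper: reduce to an elementary tensor $z=a\otimes b$, use the commutator relation $\tau_n x_{n+1}=x_n\tau_n+\P_i(x_n,x_{n+1})$ (valid because the right idempotent forces $\nu_n=\nu_{n+1}=i$), and identify the two resulting summands with $F(z(x_n\otimes 1))$ and $E(z)$. The paper's version is slightly terser, writing $F(z)=a\tau_n b$ and commuting $b$ past $x_{n+1}$ rather than framing the $t_i$-action as multiplication on the first tensor factor, but the computation is the same.
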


\begin{proof}Let $z = a \otimes b\in R(\beta)e(\beta-\alpha_i,i) \otimes_{R(\beta-\alpha_i)}
e(\beta-\alpha_i,i)R^{\Lambda}(\beta) $,
where $a \in R(\beta)e(\beta-\alpha_i,i)$ and $b \in e(\beta-\alpha_i,i)R^{\Lambda}$. Then
$$ F(z) = a \tau_n b, \qquad E(z)=a \P_i(x_n,x_{n+1}) b.$$
Thus
\begin{align*}
F(z)x_{n+1} & = a \tau_n b x_{n+1} =  a \tau_n  x_{n+1} b  = a(x_n \tau_n + \P_i(x_n,x_{n+1}))b \\
& = a x_n \tau_n b + a \P_i(x_n,x_{n+1})b \\
& = F(ax_n \otimes b) + E(z) =F(z(x_n \otimes 1))+E(z).
\end{align*}
\end{proof}

By Proposition \ref{Prop: twist by tau n}, we have
\begin{equation} \label{Eq: Decom wrt F}
\begin{aligned}
& e(\beta,i)R(\beta+\alpha_i)e(\beta,i) \otimes_{R(\beta)} R^{\Lambda}(\beta) \\
& \ = F\bl(R(\beta)e(\beta-\alpha_i,i)\otimes_{R(\beta-\alpha_i)}e(\beta-\alpha_i,i)R^{\Lambda}(\beta)\br) \oplus
      (R^{\Lambda}(\beta) \otimes \k[t_i])e(\beta,i),
\end{aligned}
\end{equation}
where $t_i=x_{n+1}$. Using the decomposition $\eqref{Eq: Decom wrt F}$, we write
\begin{align} \label{Eq: Decom Im P wrt F }
P(e(\beta,i)\tau_n \cdots \tau_1 x^k_1 e(i,\beta)) = F(\psi_k) + \varphi_k
\end{align}
for uniquely determined $\psi_k \in K'_0$ and $\varphi_k \in \k[t_i] \otimes R^{\Lambda}(\beta)$.

Using $\eqref{Eq: varphi k}$, we have
$$ A(t^k_i)= AB(e(\beta,i)\tau_n\cdots\tau_1 x^k_{1} e(i,\beta))
           = CP(e(\beta,i)\tau_n\cdots\tau_1 x^k_{1} e(i,\beta)) = \varphi_k.$$
Thus one can verify that the definition of $\varphi_k$ coincides with the definition
given in $\eqref{Eq: varphi k}$.

Since
\begin{align*}
F(\psi_{k+1})+\varphi_{k+1}
&= P \bl(e(\beta,i)\tau_n \cdots \tau_1 x_1^{k+1} e(i,\beta)\br)
= P\bl(e(\beta,i)\tau_n \cdots \tau_1 x_1^{k}  e(i,\beta)\br)x_{n+1} \\
&= \bl(F(\psi_{k})+\varphi_{k}\br)x_{n+1}
 = F(\psi_k(x_n \otimes 1))+ E(\psi_k)+\varphi_k t_i,
\end{align*}
we have
\begin{align} \label{Eq: formulas psi varphi}
\psi_{k+1}=\psi_k(x_n \otimes 1), \qquad \varphi_{k+1} =E(\psi_k)+\varphi_k t_i.
\end{align}

Now we will prove Proposition \ref{Prop: varphi k}. By Lemma \ref{Lem : B nu}, we have
$$g_n\cdots g_1 x^k_1e(i,\nu)\tau_1 \cdots \tau_n  =
x^{k}_{n+1}a^{\Lambda}_i(x_{n+1})
\prod_{\substack{1 \le a \le n, \\ \nu_a \neq i}}\Q_{i,\nu_a}(x_{n+1},x_{a})
\prod_{\substack{1 \le a \le n, \\ \nu_a = i}}\P_i(x_{n+1},x_{a})\P_i(x_{a},x_{n+1})e(\nu,i)$$
in $e(\beta,i)R(\beta+\alpha_i)e(\beta,i) \otimes R^{\Lambda}(\beta)$, which implies
\begin{align*}
AB(g_n\cdots g_1 x^k_1 e(i,\nu)) &= C\Bigl( x^{k}_{n+1}a^{\Lambda}_i(x_{n+1})
\prod_{\substack{1 \le a \le n, \\ \nu_a \neq i}}\Q_{i,\nu_a}(x_{n+1},x_{a})
\prod_{\substack{1 \le a \le n, \\ \nu_a = i}}\P_i(x_{n+1},x_{a})\P_i(x_{a},x_{n+1}) \Bigr)
e(\nu,i)\\
&= t_i^{k}a^{\Lambda}_i(t_i)
\prod_{\substack{1 \le a \le n, \\ \nu_a \neq i}}\Q_{i,\nu_a}(t_i,x_{a})
\prod_{\substack{1 \le a \le n, \\ \nu_a = i}}\P_i(t_i,x_{a})\P_i(x_{a},t_i)e(\nu).
\end{align*}
On the other hand, since $B$ is the map  taking the coefficient of
$\tau_n \cdots \tau_1$, we have
\begin{align*}
B(g_n\cdots g_1 x^k_1  e(i,\nu))
&= B\left( \prod_{\nu_a =i}(-(x_{n+1}-x_a)^2)x^k_{n+1} e(\nu,i) \tau_n \cdots \tau_1 \right) \\
&= t^k_i  \prod_{\nu_a =i} (-(t_{i}-x_a)^2 )e(\nu).
\end{align*}
Thus we have
\begin{align} \label{Eq: the image A}
A(t^k_i \prod_{\nu_a =i}(t_i-x_a)^2e(\nu) = (-1)^p  t_i^{k} a^{\Lambda}_i(t_i)
\prod_{\substack{1 \le a \le n, \\ \nu_a \neq i}}\Q_{i,\nu_a}(t_i,x_{a})
\prod_{\substack{1 \le a \le n, \\ \nu_a = i}}\P_i(t_i,x_{a})\P_i(x_{a},t_i)e(\nu).
\end{align}

Set
\begin{align*}
& \mathsf{S} = \sum_{\nu \in I^{\beta}}
\prod_{\nu_a =i}(t_i-x_a)^2 e(\nu) \in \k[t_i] \otimes R^{\Lambda}(\beta) , \\
& \mathsf{F} = \gamma (-1)^p  a^{\Lambda}_i(t_i) \sum_{\nu \in I^{\beta}} \left(
\prod_{\substack{1 \le a \le n, \\ \nu_a \neq i}}\Q_{i,\nu_a}(t_i,x_{a})
\prod_{\substack{1 \le a \le n, \\ \nu_a = i}}\P_i(t_i,x_{a})\P_i(x_{a},t_i) e(\nu) \right)
\in \k[t_i] \otimes R^{\Lambda}(\beta).
\end{align*}
Then they are monic polynomials in $t_i$ of degree $2p$ and $\langle h_i, \lambda \rangle+2p$, respectively ($p \seteq \Supp_i(\beta)$).
Note that they are contained in the center of $\k[t_i] \otimes R^{\Lambda}(\beta)$. Then $\eqref{Eq: the image A}$
can be expressed as the following form:
\begin{align} \label{Eq: Rel F and S}
 \gamma A(t_i^{k}\mathsf{S})=t_i^{k}\mathsf{F}.
\end{align}

Note that
\begin{itemize}
\item if $\Supp_i(\beta)=0$, then $K'_0=0$ and
\item if $i \in \Iim$ such that $\langle h_i, \lambda \rangle=0$
and $\Supp_i(\beta)>0$, then $R^{\Lambda}(\beta)=0$
(see Lemma~\ref{Lem: imaginary integrable}).
\end{itemize}
Thus, to prove Proposition~\ref{Prop: varphi k}, we may assume that
\begin{align} \label{Eq: assumption}
\parbox{40ex}{
$\Supp_i(\beta)>0$ and \\[1ex]
if $i \in \Iim$, then $\langle h_i, \lambda \rangle>0$.}
\end{align}

\begin{lemma} \label{Lem: Rel between F and S}
For any $k \ge 0$, we have
\begin{align} \label{Eq: remaining term}
 t^k_i \mathsf{F}= (\gamma \varphi_k)\mathsf{S}+\mathsf{h}_k,
\end{align}
where $\mathsf{h}_k\in R^\Lambda(\beta)[t_i]$
is a polynomial in $t_i$  of degree $<2p$.
In particular, $\gamma \varphi_k$ coincides with the
quotient of $t^k_i \mathsf{F}$ by $\mathsf{S}$.
\end{lemma}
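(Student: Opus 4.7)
The approach is to apply the identity $\gamma A(t_i^k \mathsf{S}) = t_i^k \mathsf{F}$ from \eqref{Eq: Rel F and S} and to expand $A(t_i^k \mathsf{S})$ using the $R^\Lambda(\beta)$-bilinearity of $A$ combined with the recursion \eqref{Eq: formulas psi varphi}. First I would write $\mathsf{S}=\sum_{m=0}^{2p} s_m t_i^m$ with $s_{2p}=1$. Since $t_i$ is central in $\k[t_i]\otimes R^\Lambda(\beta)$ and $\mathsf{S}$ is central, each coefficient $s_m$ must lie in the center of $R^\Lambda(\beta)$. Using that $A$ is $R^\Lambda(\beta)$-bilinear (though not $\k[t_i]$-linear), this gives
\[
A(t_i^k \mathsf{S}) \;=\; \sum_{m=0}^{2p} \varphi_{k+m}\, s_m.
\]

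Next, iterating $\varphi_{k+1} = E(\psi_k) + \varphi_k t_i$ yields $\varphi_{k+m}=\varphi_k t_i^m + \sum_{j=0}^{m-1} E(\psi_{k+j})\,t_i^{m-1-j}$. Substituting back and using the centrality of the $s_m$, one obtains $A(t_i^k \mathsf{S}) = \varphi_k \mathsf{S} + R_k$, where
\[
R_k \;\seteq\; \sum_{m=1}^{2p}\sum_{j=0}^{m-1} s_m\, E(\psi_{k+j})\, t_i^{m-1-j}.
\]
Multiplying by $\gamma$ and invoking \eqref{Eq: Rel F and S} yields $t_i^k \mathsf{F} = \gamma\varphi_k\mathsf{S} + \gamma R_k$, so setting $\mathsf{h}_k\seteq \gamma R_k$ produces the claimed decomposition.

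The hard part will be establishing the degree bound $\deg_{t_i}\mathsf{h}_k<2p$. The naive term-by-term estimate gives at best $\deg_{t_i} R_k \le (2p-1)+\max_j \deg_{t_i} E(\psi_{k+j})$, and since $\P_i$ has degree $1-\tfrac{a_{ii}}{2}$ in its second variable, this bound can exceed $2p-1$ whenever $a_{ii}\le 0$. The required inequality must therefore emerge from a cancellation among the leading $t_i$-terms of $R_k$. To extract this cancellation I would combine (i) the relation $E(\psi_\ell) = \varphi_{\ell+1} - \varphi_\ell t_i$, which together with Proposition~\ref{Prop: varphi k} shows that the leading terms of $\varphi_{\ell+1}$ and $\varphi_\ell t_i$ agree (yielding $\deg_{t_i} E(\psi_\ell)\le \langle h_i,\lambda\rangle+\ell$), with (ii) the explicit form of $\P_i(u,v)$ in \eqref{Def: P_i}. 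This is where the extra complexity of the polynomials $\P_i$ in the generalized Kac-Moody setting (as opposed to the symmetrizable Kac-Moody setting of \cite{KK11}) genuinely enters, and it is the main obstacle in the argument.

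Finally, granting the degree bound, the ``in particular'' statement is immediate: polynomial long division by the monic polynomial $\mathsf{S}\in R^\Lambda(\beta)[t_i]$ is unique, so $\gamma \varphi_k$ must coincide with the quotient of $t_i^k \mathsf{F}$ upon division by $\mathsf{S}$, and $\mathsf{h}_k$ with the remainder.
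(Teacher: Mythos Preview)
Your setup is essentially equivalent to the paper's: expanding $A(t_i^k\mathsf S)$ via $R^\Lambda(\beta)$-linearity and the recursion~\eqref{Eq: formulas psi varphi} leads to $t_i^k\mathsf F=(\gamma\varphi_k)\mathsf S+\mathsf h_k$ with an explicit $\mathsf h_k=\gamma R_k$, and the whole content of the lemma is the bound $\deg_{t_i}\mathsf h_k<2p$. Your proposed route to this bound, however, has two defects.

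First, it is circular. Proposition~\ref{Prop: varphi k} is \emph{deduced from} Lemma~\ref{Lem: Rel between F and S} (see the sentence immediately after the proof of the lemma in the paper), so you cannot invoke it here. Second, even if you grant Proposition~\ref{Prop: varphi k}, the estimate you extract, $\deg_{t_i}E(\psi_\ell)\le\langle h_i,\lambda\rangle+\ell$, is too weak: it yields only $\deg_{t_i}R_k\le 2p-1+\langle h_i,\lambda\rangle+k$, which is $\ge 2p$ whenever $\langle h_i,\lambda\rangle+k\ge1$, precisely the range of interest.

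The paper's argument sidesteps both issues by proving, by induction on $m=\deg_{t_i}f$, the general estimate
\[
\deg_{t_i}\bigl(A(af)-A(a)f\bigr)<m\qquad\text{for all }a,f\in R^\Lambda(\beta)[t_i].
\]
The base case is the assertion that $A(at_i)-A(a)t_i$ has $t_i$-degree $\le0$, which by $R^\Lambda(\beta)$-linearity reduces to the statement that $\varphi_{k+1}-\varphi_kt_i=E(\psi_k)$ lies in $R^\Lambda(\beta)$ (constant in $t_i$). Applying the estimate with $a=t_i^k$ and $f=\mathsf S$ and combining with~\eqref{Eq: Rel F and S} gives the lemma immediately, with no appeal to the size of $\varphi_k$. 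In your notation this is exactly the statement that each $E(\psi_{k+j})$ is constant in $t_i$, which already makes every summand $s_mE(\psi_{k+j})t_i^{m-1-j}$ of your $R_k$ have $t_i$-degree $\le m-1\le 2p-1$; the cancellation you were looking for is unnecessary once this is in hand.
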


\begin{proof}
By $\eqref{Eq: formulas psi varphi}$,
$A(t_i^{k+1})-A(t_i^{k})t_i \in R^{\Lambda}(\beta)$, which implies
\begin{align} \label{Eq: Degree 1 }
A(a t_i)-A(a)t_i \in R^{\Lambda}(\beta)[t_i] \text{ is of degree $\le 0$ in $t_i$
for any $a \in R^{\Lambda}(\beta)[t_i]$.}
\end{align}
We will show
\begin{equation}  \label{Eq: Degree 2 }
\begin{aligned}
& \parbox{70ex}{for any polynomial $f\in R^{\Lambda}(\beta)[t_i]$ in
$t_i$ of degree $m$ and
    $a \in R^{\Lambda}(\beta)[t_i]$,} \\
&\text{$A(a f)- A(a)f$ is of degree $<m$.}
\end{aligned}
\end{equation}
We will use induction on $m$. By the fact that $A$ is $R^{\Lambda}(\beta)$-linear and $\eqref{Eq: Degree 1 }$,
 it holds for $m=0$ and $1$.
Thus it suffices to show \eqref{Eq: Degree 2 } when $f=t_i g$ and \eqref{Eq: Degree 2 } is true for $g$.
Then
\begin{align*}
A(af)-A(a)f = (A(a t_i g) - A(a t_i)g) + ( A(a t_i) - A(a)t_i) g.
\end{align*}
Then the first term
is of degree $<\deg(g)$ in $t_i$ and
the second term is of degree $<\deg(g)+1$.
Hence we prove $\eqref{Eq: Degree 2 }$.
Thus we have
$$ t^k_i \gamma^{-1}\mathsf{F} - \varphi_k \mathsf{S} = t^k_i \gamma^{-1}\mathsf{F} -A(t^k_i)\mathsf{S}
= A(t^k_i\mathsf{S})-A(t^k_i)\mathsf{S}$$
by $\eqref{Eq: Rel F and S}$ and it is of degree $<2p$
by applying $\eqref{Eq: Degree 2 }$
for $f=\mathsf{S}$.
\end{proof}

Thus by Lemma \ref{Lem: Rel between F and S}, we can conclude that
$\gamma \varphi_k$ is a monic polynomial in $t_i$ of degree $\langle
h_i, \lambda \rangle +k$, which completes the proof of Proposition
\ref{Prop: varphi k}.

\bigskip
\noi \textbf{Proof of Theorem \ref{Thm: Main}:} By the Snake Lemma,
we have the following exact sequence
$$0 \to {\rm Ker}A \to q^{-a_{ii}}_iF^{\Lambda}_iE^{\Lambda}_iR^{\Lambda}(\beta)
\to E^{\Lambda}_iF^{\Lambda}_iR^{\Lambda}(\beta) \to {\rm Coker}A \to 0.$$

If $\langle h_i,\lambda \rangle \ge 0$, by Proposition \ref{Prop: varphi k}, we have
$$ {\rm Ker}A=0, \qquad \bigoplus^{\langle h_i,\lambda \rangle-1}_{k=0} \k t^k_i \otimes R^{\Lambda}(\beta)
\overset{\sim}{\to} {\rm Coker}A.$$
Hence we obtain
$$q_i^{-a_{ii}}F^{\Lambda}_iE^{\Lambda}_i \oplus
\bigoplus^{\langle h_i,\lambda \rangle-1}_{k=0} q_i^{2k}{\rm Id} \overset{\sim}{\to}
E^{\Lambda}_iF^{\Lambda}_i,$$
which is the proof the statement of Theorem \ref{Thm: Main} (1).

If $\langle h_i,\lambda \rangle <0$, then $i \in \Ire$. In this
case, the proof is the same as in \cite[Theorem 5.2 (b)]{KK11}. \qed
\vskip 1em

We define the modified functors $\mathcal{E}^{\Lambda}_i$ and $\mathcal{F}^{\Lambda}_i$ on $\Mod(R)$:
$$\mathcal{E}^{\Lambda}_i =E^{\Lambda}_i, \quad
\mathcal{F}^{\Lambda}_i= q_i^{1-\langle h_i,\Lambda-\beta\rangle}
F^{\Lambda}_i.$$ Then by applying degree shift functor
$q_i^{1-\langle h_i,\Lambda-\beta \rangle}$ to the equations
$\eqref{Thm: Comm E_i Lam F_j Lam}$, $\eqref{Eq: Comm E_i Lam F_i
Lam 1}$ and $\eqref{Eq: Comm E_i Lam F_i Lam 2}$, we obtain the
natural isomorphisms
\begin{equation} \label{Eq: The com rel}
\begin{aligned}
& \mathcal{E}^{\Lambda}_i \mathcal{F}^{\Lambda}_j \simeq \mathcal{F}^{\Lambda}_j \mathcal{E}^{\Lambda}_i \quad \text{ if } i \neq j, \\
& \mathcal{E}^{\Lambda}_i \mathcal{F}^{\Lambda}_i \simeq \mathcal{F}^{\Lambda}_i \mathcal{E}^{\Lambda}_i \oplus
\dfrac{q_i^{\langle h_i,\Lambda-\beta \rangle}-q_i^{-\langle h_i,\Lambda-\beta \rangle}}{q_i-q^{-1}_i}
{\rm Id}\quad\text{if $\langle h_i,\Lambda-\beta \rangle\ge0$,}\\
&\mathcal{E}^{\Lambda}_i \mathcal{F}^{\Lambda}_i\oplus
\dfrac{q_i^{-\langle h_i,\Lambda-\beta \rangle}-q_i^{\langle
h_i,\Lambda-\beta \rangle}}{q_i-q^{-1}_i}{\rm Id} \simeq
\mathcal{F}^{\Lambda}_i \mathcal{E}^{\Lambda}_i \quad\text{if
$\langle h_i,\Lambda-\beta \rangle\le0$}
\end{aligned}
\end{equation}
on $\Mod(R^{\Lambda}(\beta))$.
 Now, assume that $\k_0$ is a field.
Then,
as operators on
$[\Proj(R^{\Lambda})]$ and $[\Rep(R^{\Lambda})]$, they satisfy the
commutation relations
$$[\mathcal{E}^{\Lambda}_i, \mathcal{F}^{\Lambda}_j] = \delta_{i,j}\dfrac{K_i-K^{-1}_i}{q_i-q^{-1}_i},$$
where
$$K_i|_{[\Proj(R^{\Lambda}(\beta))]} \seteq q_i^{\langle h_i,\Lambda-\beta \rangle}, \quad
K_i|_{[\Rep(R^{\Lambda}(\beta))]}\seteq q_i^{\langle h_i,\Lambda-\beta \rangle}.$$

Combining  Lemma \ref{Lem:
nilpotency}, Lemma \ref{Lem: imaginary integrable} and Theorem
\ref{Thm: main result of KOP11} as in \cite[Section 6]{KK11},
we obtain a categorification of the
irreducible highest weight $U_q(\g)$-module $V(\Lambda)$:

\begin{theorem} \label{Thm: Categoeification}
 If $a_{ii} \neq 0$ for all $i \in I$, then there exist $U_\A(\g)$-module isomorphisms
 $$[\Proj(R^{\Lambda})] \simeq V_{\A}(\Lambda) \quad \text{ and } \quad
 [\Rep(R^{\Lambda})] \simeq V_{\A}(\Lambda)^{\vee}. $$
\end{theorem}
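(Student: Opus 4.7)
The plan is to deduce the theorem from the already-established commutation relations together with the classification of irreducible integrable highest weight modules in $\Oint$ (Proposition~\ref{Prop: HW module}). First, I would endow $[\Proj(R^{\Lambda})]$ with the structure of a $U_{\A}(\g)$-module by letting $\mathcal{E}^{\Lambda}_i$ and $\mathcal{F}^{\Lambda}_i$ act as $e_i$ and $f_i$ and letting $q^h$ act on $[\Proj(R^{\Lambda}(\beta))]$ as $q^{\langle h,\Lambda-\beta\rangle}$. The relations in Definition~\ref{Def: GKM}(i)--(iii) follow from the weight grading and from the commutation relations \eqref{Eq: The com rel}, while the Serre relations (iv)--(vi) on the $f_i$ side are inherited from the isomorphism $U^{-}_{\A}(\g) \hookrightarrow [\Proj(R)]$ in Theorem~\ref{Thm: main result of KOP11} (which uses the hypothesis $a_{ii}\neq 0$) composed with the exact quotient functor $[\Proj(R)] \twoheadrightarrow [\Proj(R^{\Lambda})]$; the Serre relations on the $e_i$ side then follow by transporting across the commutation relations as in \cite[Section 6]{KK11}.

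Next, I would verify that $[\Proj(R^{\Lambda})]$ lies in $\Oint$ and is a highest weight module of highest weight $\Lambda$. The class $[R^{\Lambda}(0)] = [\k]$ is a highest weight vector of weight $\Lambda$: it is killed by every $\mathcal{E}^{\Lambda}_i$ since $R^{\Lambda}(-\alpha_i) = 0$, and acts with the prescribed $q^h$-eigenvalue. Surjectivity of $U_{\A}^{-}(\g) \cdot [\k] \to [\Proj(R^{\Lambda})]$ comes from the fact that $R^{\Lambda}(\beta)$ is generated as a right module over itself by its idempotents, which are reached from $[\k]$ by iterated application of $\mathcal{F}^{\Lambda}_i$. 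The weight multiplicities are finite because each $R^{\Lambda}(\beta)$ is finite-dimensional: Lemma~\ref{Lem: nilpotency}(i) bounds the $x_k$'s and Proposition~\ref{Prop: essence of KOP1} then gives finite generation as a $\k$-module. Local nilpotency of $\mathcal{F}^{\Lambda}_i$ for $i \in \Ire$ comes from Lemma~\ref{Lem: nilpotency}(ii), and the imaginary-root integrability conditions (iv)--(vi) of Definition~\ref{Def: integrable module} follow from Lemma~\ref{Lem: imaginary integrable} together with the commutation relation $\mathcal{E}^{\Lambda}_i\mathcal{F}^{\Lambda}_i \simeq \mathcal{F}^{\Lambda}_i\mathcal{E}^{\Lambda}_i$ when $\langle h_i,\Lambda-\beta\rangle = 0$ and $i \in \Iim$. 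By Proposition~\ref{Prop: HW module}, there is then a surjection $V_{\A}(\Lambda) \twoheadrightarrow [\Proj(R^{\Lambda})]$ sending $v_{\Lambda} \mapsto [\k]$.

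For injectivity, which is where the main difficulty lies, I would factor this surjection through the commutative diagram
\begin{equation*}
\xymatrix{
U^{-}_{\A}(\g) \ar@{->>}[r] \ar[d]^{\wr} & V_{\A}(\Lambda) \ar@{->>}[d] \\
[\Proj(R)] \ar@{->>}[r] & [\Proj(R^{\Lambda})]
}
\end{equation*}
and observe that the composition $U^{-}_{\A}(\g) \to V_{\A}(\Lambda) \to [\Proj(R^{\Lambda})]$ kills precisely the defining relations $f_i^{\langle h_i,\Lambda\rangle+1} v_{\Lambda} = 0$ ($i \in \Ire$) and $f_i v_{\Lambda} = 0$ ($i \in \Iim$, $\langle h_i,\Lambda\rangle = 0$), because $[\Proj(R^{\Lambda})]$ is integrable. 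Since $V_{\A}(\Lambda)$ is by definition the quotient by exactly this ideal, the right vertical arrow is forced to be injective, hence an isomorphism. The key point here is that the left vertical arrow is already an isomorphism (this is where $a_{ii}\neq 0$ enters essentially), so no further ``upper bound'' argument is needed.

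Finally, for the dual statement $[\Rep(R^{\Lambda})] \simeq V_{\A}(\Lambda)^{\vee}$, I would use the perfect $\A$-bilinear pairing between $[\Proj(R^{\Lambda})]$ and $[\Rep(R^{\Lambda})]$ given by $([P],[M]) = \dim_{q} \Hom_{R^{\Lambda}}(P,M)$, and check that the actions of $\mathcal{E}^{\Lambda}_i$ and $\mathcal{F}^{\Lambda}_i$ on the two sides are adjoint (up to the appropriate degree shift), matching the anti-involution $\phi$ of \eqref{Eq: Pairing}. Exactness of $\mathcal{E}^{\Lambda}_i, \mathcal{F}^{\Lambda}_i$ on $\Rep(R^{\Lambda})$ from Theorem~\ref{Thm: Exact} ensures the pairing intertwines the module structures, and then the first isomorphism dualizes to give the second. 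The hard part of the whole argument is verifying the Serre relations for $e_i$ and the imaginary-integrability (v),(vi) from the commutation relations; everything else is formal consequence of the results already proved in the paper and in \cite{KK11, KOP11}.
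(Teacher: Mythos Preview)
Your overall outline matches the paper's approach (which simply refers to \cite[Section~6]{KK11} together with Lemma~\ref{Lem: nilpotency}, Lemma~\ref{Lem: imaginary integrable} and Theorem~\ref{Thm: main result of KOP11}), and most of the steps you list are exactly the right ones. However, your injectivity argument has a genuine gap.

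You write that the composition $U^{-}_{\A}(\g)\to V_{\A}(\Lambda)\to [\Proj(R^{\Lambda})]$ ``kills precisely the defining relations\ldots because $[\Proj(R^{\Lambda})]$ is integrable,'' and conclude that the map $V_{\A}(\Lambda)\to [\Proj(R^{\Lambda})]$ is injective. But integrability only tells you that the defining relations \emph{lie in} the kernel; it does not show the kernel is \emph{equal to} the ideal they generate. Equivalently, in your commutative square you know only that $\ker\bl(U^{-}_{\A}(\g)\to V_{\A}(\Lambda)\br)\subset \ker\bl([\Proj(R)]\to [\Proj(R^{\Lambda})]\br)$ under the left vertical isomorphism; the reverse inclusion is exactly the injectivity you are trying to prove, so the argument as stated is circular. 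The remark that ``the left vertical arrow is already an isomorphism, so no further upper bound argument is needed'' does not close this gap.

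The fix is already in your hands: you verified that $\mathbb{Q}(q)\otimes_{\A}[\Proj(R^{\Lambda})]$ is a highest weight module in $\Oint$ with highest weight $\Lambda$, and Proposition~\ref{Prop: HW module}\,(ii) says such a module is \emph{isomorphic to} $V(\Lambda)$, not merely a quotient of it. So the surjection $V_{\A}(\Lambda)\twoheadrightarrow [\Proj(R^{\Lambda})]$ becomes an isomorphism after tensoring with $\mathbb{Q}(q)$. Since $V_{\A}(\Lambda)\subset V(\Lambda)$ is torsion-free over $\A$ and $[\Proj(R^{\Lambda})]$ is a free $\A$-module, the kernel (being torsion) must vanish. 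That is the actual argument in \cite[Section~6]{KK11}; irreducibility of $V(\Lambda)$ is the missing ingredient, not a direct computation of the kernel of $[\Proj(R)]\to [\Proj(R^{\Lambda})]$.
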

\vskip 2em

\bibliographystyle{amsplain}

\begin{thebibliography}{99}

\bibitem{BK09}
J. Brundan, A. Kleshchev, \emph{Graded decomposition numbers for cyclotomic Hecke algebras},
Adv. Math. \textbf{222} (2009) 1883--1942.

\bibitem{BS08}
J. Brundan, K. Stroppel, \emph{Highest weight categories arising from Khovanov¡¯s diagram algebra III:
category O},  Represent. Theory \textbf{15} (2011), 170--243.

\bibitem{BKM98}
G. Benkart, S.-J. Kang, D. J. Melville, \emph{Quantized enveloping
algebras for Borcherds superalgebras}, Trans. Amer. Math. Soc.
\textbf{350} (1998) 3297--3319.

\bibitem{CL12}
S, Cautis and A. Lauda, \emph{Implicit structure in
2-representations of quantum groups}, arXiv:1111.1431v2 (2011). 

\bibitem{JKK05}
K.~Jeong, S.-J. Kang, M.~Kashiwara, \emph{Crystal bases for quantum
  generalized {K}ac-{M}oody algebras}, Proc. London Math. Soc. (3) \textbf{90}
  (2005), no.~2, 395--438.

\bibitem{Kang95}
S.-J. Kang, \emph{Quantum deformations of generalized {K}ac-{M}oody algebras
  and their modules}, J. Algebra \textbf{175} (1995), no.~3, 1041--1066.

\bibitem{KK11}
S.-J. Kang, M.~Kashiwara, \emph{Categorification of highest weight
modules
  via {K}hovanov-{L}auda-{R}ouquier algebras}, arXiv:1102.4677 (2011), 
to appear in Invent. Math.

\bibitem{KKP12}
S.-J. Kang, M.~Kashiwara, E. Park, \emph{Geometric realization of
Khovanov-Lauda-Rouquier algebras associated with Borcherds-Cartan
data}, arXiv:1202:1622 (2012). 

\bibitem{KOP11a}
S.-J. Kang, S.-j. Oh, E. Park, \emph{Perfect bases for integrable
modules over generalized Kac-Moody algebras}, Algebr. Represent. Theory
{\bf 14} (2011) no. 3, 571--587.

\bibitem{KOP11}
\bysame, \emph{Categorification of quantum
  generalized Kac-Moody algebras and crystal bases}, arXiv:1102.5165 (2011).


\bibitem{KL09}
M.~Khovanov, A.~Lauda, \emph{A diagrammatic approach to
categorification of
  quantum groups {I}}, Represent. Theory \textbf{13} (2009), 309--347.

\bibitem{KL11}
\bysame, \emph{A diagrammatic approach to categorification of quantum groups
  {II}}, Trans. Amer. Math. Soc. \textbf{363} (2011), no.~5, 2685--2700.

\bibitem{KL10}
\bysame, 
{\em A categorification of quantum $\mathrm{sl}(n)$},
Quantum Topology \textbf{1} (2010), no.~1, 1--92.

\bibitem{LV09}
A.~Lauda and M.~Vazirani, \emph{Crystals from categorified quantum groups},
  Adv. Math. \textbf{228} (2011), no.~2, 803--861.

\bibitem{R08}
R.~Rouquier, \emph{2 {K}ac-{M}oody algebras}, arXiv:0812.5023 (2008).

\bibitem{Web10}
B. Webster, \emph{Knot invariants and higher dimensional
representation theory I: diagrammatic and geometric categorification
of tensor products}, arXiv:1001.2020v5 (2010). 

\end{thebibliography}


\end{document}